\def\input@path{{TikzPicz/}}
\title{Travelling waves in a PDE--ODE coupled system with nonlinear diffusion}
\author[1]{K. Mitra\footnote{email: \href{mailto:koondanibha.mitra@ru.nl}{koondanibha.mitra@ru.nl} }}
\author[2]{J.M. Hughes}
\author[3]{S. Sonner}
\author[4]{H.J. Eberl}
\author[5]{J.D. Dockery}
\affil[1,3]{Radboud University, The Netherlands}
\affil[2]{University of British Columbia, Canada}
\affil[4]{University of Guelph, Canada}
\affil[5]{Montana State University, USA}
\numberwithin{equation}{section}
\DeclareMathSizes{\@xpt}{\@xpt}{6}{5}
\newcounter{def}
  \newcounter{EUXproperties}
\newtheorem{theorem}{Theorem}[section]
\newtheorem{lemma}[theorem]{Lemma}
\newtheorem{proposition}[theorem]{Proposition}
\newtheorem{corollary}{Corollary}[theorem]
\newtheorem{remark}{Remark}[section]
\newtheorem{definition}[def]{Definition}
\theoremstyle{definition}
\newtheorem{assumption}{Assumption}[section]
\newtheorem{algorithm}{Algorithm}[section]
\def \g  {\gamma}
\def \d  {\delta}
\def \D  {\Delta}
\def \e  {\varepsilon}
\def \vr  {\varrho}
\def \k  {\kappa}
\def \l  {\lambda}
\def \om {\omega}
\def \Om {\Omega}
\def \t  {\tau}
\def \z  {\zeta}
\def \Gf {\mathcal{G}}
\def \Mc {\mathfrak{M}}
\def \p  {\partial}
\def \N  {{\mathbb{N}}}
\def \R  {{\mathbb{R}}}
\def \sgn {{\rm sign}}
\def \dz {\tfrac{\dd}{\dd \xi}}
\def \dt {\tfrac{\dd}{\dd \t}}
\def \calR {\mathfrak{R}}
\def \calU {\mathcal{U}}
\def \dd {\mathrm{d}}
\newcommand{\co}[1]{\langle {#1} \rangle}
\newcommand{\ul}[1]{\underline{#1}}
\let\oldequation\equation
\let\oldendequation\endequation
\renewenvironment{equation}
  {\linenomathNonumbers\oldequation}
  {\oldendequation\endlinenomath}
  \let\oldalign\align
\let\oldendalign\endalign
\renewenvironment{align}
  {\linenomathNonumbers\oldalign}
  {\oldendalign\endlinenomath}
\begin{document}
\maketitle

\begin{abstract}
We analyze travelling wave (TW) solutions for nonlinear systems consisting of an ODE coupled to a degenerate PDE with a diffusion coefficient  
that vanishes as the solution tends to zero and blows up as it approaches its maximum value. 
Stable TW solutions for such systems have previously been observed numerically as well as in biological experiments on the growth of cellulolytic biofilms. 
In this work, we provide an analytical justification for these observations and prove existence and stability results for TW solutions of such models. 

Using the TW ansatz and a first integral, the system is reduced to an autonomous dynamical system with two unknowns. Analysing the system in the corresponding phase--plane, the existence of a unique TW is shown, which possesses a sharp front and a diffusive tail, and is moving with a constant speed. The linear stability of the TW in two space dimensions is proven under suitable assumptions on the initial data. Finally, numerical simulations are presented that affirm the theoretical predictions on the existence, stability and parametric dependence of the TWs. 
\end{abstract}

\textbf{Keywords:} travelling waves, stability, degenerate diffusion, PDE--ODE system, biofilm

\tableofcontents

\section{Introduction}
Motivated by mathematical models for biofilm growth, we investigate
travelling wave (TW) solutions for coupled PDE--ODE systems of the form
\begin{subequations}\label{eq:PDE}
\begin{align}
&\p_t M=\p_x [D(M)\,\p_x M] + \left (f(S)-\lambda\right )M,\\
& \p_t S= -\g\,f(S)\,M,
\end{align}
\end{subequations}
where $x\in\R$ and $t>0$ represent the space and time coordinates respectively.
The functions $M$ and $S$ are normalized, $S$ takes values in the interval $[0,\infty)$ and $M$ in $[0,1)$.
The growth and decay characteristics of the system are represented by the constants $\l,\,\g\in (0,1)$. 
The diffusion coefficient $D:[0,1)\to [0,\infty)$ has a singularity as well as a degeneracy. More specifically, it satisfies $D(m)\nearrow \infty$ as $m\nearrow 1$, and $D(0)=0$. The source function $f:[0,\infty)\to [0,1]$ is Lipschitz continuous and increasing. The exact assumptions on $D$ and $f$ are stated in \Cref{sec:Preliminiary}.

For $M$ we consider the following boundary conditions 
\begin{subequations}\label{eq:PDEC}
\begin{align}
M(\pm \infty,t)=0, \quad (D(M)\,\p_x M)(\pm \infty,t)=0,
\end{align}
and initial conditions that are consistent with the above. In particular, we assume that
\begin{align}
\begin{cases}
M(x,0)\in [0,1), \quad S(x,0)\in [0,\infty) &\text{ for all } x< 0,\\
M(x,0)=0, \quad  S(x,0)=1 &\text{ for all } x\geq 0.
\end{cases}
\end{align}
\end{subequations}

Standard approaches to prove the existence and stability of TW solutions  
for equations such as the Fischer-KPP equation do not generalize to the system \eqref{eq:PDE}.
Difficulties arise through the degeneracy and singularity of the diffusion coefficient, and the nonlinear coupling between the PDE and the ODE, which leads to a  non-monotone profile for $M$. In this paper, we use ordering of orbits in the $(M,S)$ phase--plane to prove the existence and uniqueness of a TW solution for the system \eqref{eq:PDE} satisfying suitable boundary conditions. We further derive stability results using asymptotic expansions. Moreover, we present numerical simulations that affirm the theoretical predictions on the existence, stability and parametric dependence of the TWs. 

Several examples of semilinear evolution equations coupled to an ODE 
through the source--term can be found in 
 \cite{logan2001transport} as models of reactive  transport through the subsurface.  In mathematical biology, examples of similar systems are found in \cite[Chapter 13]{murray2002mathematicalbiology}, as well as a discussion on their TW solutions.
PDE--ODE coupled systems of various type with nonlinear diffusion coefficients are used to model variety of other physical  or biological processes, ranging from hysteretic flow through porous media \cite{mitra2021existence}, to
 tumor growth \cite{harley2014existence}, and wound healing \cite{harley2014novel}.
The particular motivation for our analysis is the model for cellulolytic biofilm growth in \cite{eberl2017spatially} for which TW solutions have been observed numerically. 
Cellulolytic biofilms play an important role in the production of cellulosic ethanol, a renewable biofuel that can be implemented in the existing transportation infrastructure.
In contrast to more traditional biofilms that form on mostly abiotic surfaces and develop colonies that grow into the surrounding aqueous phase, 
many cellulolytic biofilms degrade and consume the biological material that supports them and form crater like depressions, a phenomenon known as inverted colony formation. Since the nutrients are immobile, whereas, the biofilm expands spatially, a PDE--ODE model was proposed for cellulolytic biofilms in \cite{eberl2017spatially} which is a special case of \eqref{eq:PDE}. In the model, $M$ represented  the biomass density and $S$ the nutrient concentration. 
The bacteria consume nutrients and degrade the biological material which results in the production of biomass. This was described in \cite{eberl2017spatially} using the Monod reaction function $f$, whereas, the spatial spreading of biomass was modelled by a density-dependent diffusion coefficient $D$, with the corresponding expressions being
\begin{subequations}\label{eq:DefD}
\begin{align}
D(m)&=\frac{\d\,m^a}{(1-m)^b}, && \d>0, \ a,\,b>1,\label{eq:DefDa}\\[.5em]
f(s)&= \frac{s}{\k+s}, && 0<\k\ll 1.\label{eq:DefDb}
\end{align}
\end{subequations}

Cellulolytic biofilm formation has been studied both numerically and experimentally. In \cite{wang2011spatial}, an agent based stochastic discrete cellular automaton model was used to study the system. The model in \cite{eberl2017spatially}, on the other hand, is the deterministic continuum model \eqref{eq:PDE} with 
the specific functions $f$ and $D$ in \eqref{eq:DefD}. Extensions of this model that account for attachment of cells from the aqueous phase to the biofilm were presented in \cite{rohanizadegan2020SPDE,hughes2022RODE} using either It\^o stochastic differential equations or random differential equations.
Numerical simulations of the original deterministic model in \cite{eberl2017spatially} suggest the existence of TW solutions, which describe a constant rate of degradation of the cellulosic material that is utilised by the bacteria. Additional numerical evidence for this is given in \cite{hughes2022RODE} where a different time integration method is used, along with an independent implementation. Celluloytic biofilm systems are very difficult to observe experimentally with time-lapse microscopy techniques. Nevertheless, in \cite{wang2011spatial} experiments were reported that suggest degradation of paper chads by cellulosic biofilms at a constant speed, which gives indirect evidence for TW like degradation in the biological system.
Furthermore, experimental observations in \cite{wang2011spatial}
and in \cite{dumitrache2015mathematical} indicate that the width of the microbially 
active band remains constant as the wave of microbial crater formation 
spreads.
A rigorous proof of the existence and features of TW solutions of the model in \cite{eberl2017spatially}, or an answer to the question `under which conditions on parameters such TWs can be found', have so far been open problems. In this study we provide the answers. For this purpose, the initial and boundary conditions \eqref{eq:PDEC} are chosen to be consistent with the physical setting of the numerical experiments in \cite{eberl2017spatially}.  We emphasize that, although our study is motivated and prompted by the cellulosic biofilm system, our results are valid for a significantly wider class of problems.

The existence of TWs for the (scalar) porous medium equation with a nonlinear source term was investigated in \cite{de1991travelling}. It was shown that under certain conditions on the coefficients, there exists a minimum speed for which TW solutions exist. The results were extended to include nonlinear advection terms in \cite{Pablo2000}. Furthermore, in \cite{biro2002} the stability of such TW solutions was shown in one space dimension. For the porous medium equation with Fischer type reaction term, 
the existence of TWs was shown in \cite{sanchez1996shooting}, again for wave-speeds larger than a minimum value. In \cite{de1998travelling}, further qualitative properties of the TW solutions were analysed. These results were generalized in \cite{efendiev2009classification} for equations with the biofilm diffusion coefficient $D$ in \eqref{eq:DefDa}. However, the aforementioned results are limited to scalar equations with Fisher-type nonlinear reaction term, and thus, exclude the complex interplay between the ODE and the PDE solutions. While
the TW profile is a monotone function with respect to $x$ in all the mentioned results, this is not the case for the TW profile of $M$ in our system. 

TWs for PDE--ODE coupled systems have been studied for  multiphase flow through porous media in \cite{VANDUIJN2018232,mitra2018wetting,mitra2019fronts}, where non-monotone profiles of $M$ have been observed. The ordering of orbits in the phase--plane is also used in these papers to predict the behaviour of the TWs. The existence of TWs in two-dimensions for a PDE--ODE model was investigated in \cite{mitra2020travelling} in the context of hysteretic flow through porous media, and non-planar TWs were shown to exist. However, TWs in these cases originate from the advection term, rather than the source term.  Non-monotone profiles have also been observed for the TWs of PDE--ODE systems arising in biology, see \cite{harley2014existence,harley2014novel} for examples. TWs for a PDE--PDE coupled model of bacteria spreading in an aqueous phase were analysed in \cite{satnoianu2001travelling}. Nevertheless, these systems differ fundamentally in their structure from \eqref{eq:PDE}.  In our setting, as will be observed later, the TWs have 
distinctive features which distinguish them from the examples above. They inherit a sharp front and a minimum speed of propagation like  TWs of the porous medium equation. However, due to the coupling with the ODE, they exhibit a non-monotone profile with a diffusive tail.

The outline of our paper is as follows:
In Section \ref{sec:Preliminiary}, we state the assumptions on the associated functions, and using the TW ansatz, the system \eqref{eq:PDE} is reduced to a dynamical system with two unknowns. The existence result for the TWs is also stated, see \Cref{theo:main}. In \Cref{sec:TW existence}, using phase--plane analysis we develop the auxiliary results which are then used to prove \Cref{theo:main}. \Cref{sec:stability} is dedicated to proving a linear stability result for the TWs in two space dimensions using asymptotic expansion. In \Cref{sec:num}, numerical results are presented for a discretization of the full PDE--ODE system, an ODE approach inspired by the TW analysis, and a numerical continuation approach. All three different approaches concur about the existence/non-existence of TWs in a parametric regime indicated by our theory. Furthermore, numerical results showing the influence of the parameters, and the stability of the TWs are presented. In \Cref{sec:interpretation}, we interpret the analytical and numerical results in the context of cellulolytic biofilms, and discuss possible generalizations and future applications.

\section{Preliminaries and main result}\label{sec:Preliminiary}
We investigate the existence of solutions for the system \eqref{eq:PDE} with the boundary and initial conditions \eqref{eq:PDEC}. To this end, we assume that $D$ and $f$ have the following properties:

  \begin{enumerate}[label=(P\theEUXproperties)]
 \setlength\itemsep{-0.2em}
\item The diffusion coefficient $D:[0,1)\to [0,\infty)$ is an increasing function in $C^1([0,1))$ which satisfies for constants $a,\,b>1$
$$
\lim\limits_{m\searrow 0} \frac{D(m)}{m^a}\in \R^+,\;\;\text{ and }\;\; \lim\limits_{m\nearrow 1} (1-m)^b\,D(m)\in \R^+.
$$
\label{prop:D}

\stepcounter{EUXproperties}
\item The source function $f: [0,1]\to [0,1]$ is an increasing function in $C^1([0,1])$ which satisfies for a constant $\k\in (0,1]$,
$$
f(0)=0,\quad f'(0)=\frac{1}{\k},\;\;\text{ and }\;\; f(1)\in (\l,1].
$$
\label{prop:f}
\end{enumerate}

The TW ansatz for the subsequent theory is:
\begin{assumption}[TW ansatz]\label{ass:TW}
For a wave-speed $v>0$, and the travelling wave coordinate $\xi=x-v t$, there exist $M,S:\R\times\R^+\to [0,1]$ which satisfy \eqref{eq:PDE}--\eqref{eq:PDEC} and
$$
M(x,t)=M(\xi), \quad  S(x,t)=S(\xi).
$$
\end{assumption}
With this ansatz, the system \eqref{eq:PDE} is written  as
\begin{subequations}\label{eq:ODE1}
\begin{align}
-&v \dz M=\dz \left [D(M)\,\dz M \right ] + \left (f(S)-\lambda\right )M,\label{eq:ODE1m}\\
& v\dz S= \g\, f(S)\, M.\label{eq:ODE1s}
\end{align}
\end{subequations}
The initial and boundary conditions \eqref{eq:PDEC} are transformed into
\begin{subequations}\label{eq:ODEBC}
\begin{align}
M(-\infty)&= (D(M)\,\dz M)(- \infty)=0,\\
M(\xi)&=(D(M)\,\dz M)(\xi)=0, \quad S(\xi)=1 \text{ for all } \xi\geq 0.
\end{align}
\end{subequations}

\subsection{Auxiliary quantities and reductions}
For a given solution $(M,S)$ of \eqref{eq:ODE1}--\eqref{eq:ODEBC}, the accumulated biomass $\om:\R\to \R^+$ until $\xi\in \R$, is defined as
\begin{align}\label{eq:Defom}
\om(\xi):=\int_{-\infty}^\xi M,\quad \text{ implying }\quad  \dz \om=M \text{ and } \om(-\infty)=0.
\end{align}
Assuming $M$, $S$ and $\om$ are smooth enough, \eqref{eq:ODE1m} is then rewritten using \eqref{eq:ODE1s} and \eqref{eq:Defom} as
$$
-v \dz M=\dz \left [D(M)\,\dz M \right ] + \dz\left (\tfrac{v}{\g}S -\l \om \right ).
$$
Integrating the above equation from $-\infty$ to $\xi$, we have using \eqref{eq:ODEBC} that
$$
-v M = D(M)\,\dz M  + \tfrac{v}{\g}(S-S(-\infty)) -\l \om.
$$
Observe that $S$ is a non-decreasing function by \eqref{eq:ODE1s} with $S(0)=1$, and $S(\xi)\geq 0$ for all $\xi\in \R$. Therefore,  $S(-\infty)\geq 0$ is well-defined. Upon rearranging the above equation  one has
\begin{align}
D(M)\,\dz M =\l \om -v \,(M + \tfrac{1}{\g}(S-S(-\infty))).
\end{align}
Passing $\xi\to \infty$ and using \eqref{eq:ODEBC} one further has
\begin{align}\label{eq:ominf}
\om(+\infty)=\tfrac{v}{\l\,\g}(1-S(-\infty)),
\end{align}
which serves as a kind of Rankine-Hugoniot condition for the wave-speed $v$.
Finally, using the relations above, \eqref{eq:ODE1} is rewritten as an autonomous dynamical system for $M$, $S$ and $\om$, 
\begin{subequations}\label{eq:ODE2}
\begin{align}
&\dz M= \tfrac{1}{D(M)}[\l \om -v \,(M + \tfrac{1}{\g}(S-S(-\infty)))],
\label{eq:ODE2m}\\
& \dz S= \tfrac{\g}{v} \, f(S)\,M,\label{eq:ODE2s}\\
& \dz \om=M\label{eq:ODE2om}.
\end{align}
\end{subequations}
Observing that $S(\xi)\geq S(-\infty)\geq 0$ for all $\xi\in \R$, equation \eqref{eq:ODE2s} is rewritten using \eqref{eq:ODE2om} as
\begin{align}\label{eq:dSdOmega}
\dz S=\tfrac{\g}{v}f(S)\dz \om \quad \text{ or }\quad  \tfrac{v}{\g f(S)} \dz S=\dz \om.
\end{align}
We introduce the function $F:(0,1]\to \R^+$ as
\begin{align}\label{eq:DefF}
F(s):=\int^1_s \frac{\dd \vr}{f(\vr)}.\quad  \text{ It follows from \ref{prop:f} that }\;  F'(s)<0,\;\lim\limits_{s\searrow 0}F(s)=\infty,\; F(1)=0.
\end{align}
The limit $F(s)\to \infty$ for $s\searrow 0$ follows from \ref{prop:f} since $f(s)\sim  s\slash \k$ in a right neighbourhood of $s=0$, and consequently $F(s)= \int_s \frac{1}{f}\sim -\k\log(s)\to \infty$ as $s\searrow 0$.  Integrating \eqref{eq:dSdOmega} from $\xi\leq 0$ to $+\infty$ and using \eqref{eq:ODEBC} one has $\tfrac{v}{\g} F(S)= \om(+\infty)-\om$, which upon rearranging and using  \eqref{eq:ominf} gives
\begin{align}\label{eq:omeq}
\om=\tfrac{v}{\g}\left [\tfrac{1}{\l}(1-S(-\infty))- F(S)\right ]. 
\end{align}
Passing $\xi\to -\infty$ in the above equation, using $\om(-\infty)=0$ from \eqref{eq:Defom} and cancelling equal terms, we get
$$
S(-\infty)+\l F(S(-\infty))=1.
$$
Since $F$ is strictly decreasing and convex as evident from \ref{prop:f}, there can at most be two solutions of the equation $g(s)=s+ \l F(s)=1$ in $(0,1]$. One trivial solution is $s=1$. Since $g(0)=\infty$ from \eqref{eq:DefF}, the existence of the second solution  is guaranteed if $g'(1)>0$, or  $F'(1)=-1\slash f(1)>-1\slash \l$ which holds due to \ref{prop:f}. Hence, we define $s_{-\infty} \in (0,1)$ as the solution of 
\begin{align}\label{eq:Sinf}
g(s_{-\infty})=s_{-\infty}+\l F(s_{-\infty})=1.
\end{align} 

\begin{remark}[The value of $s_{-\infty}\in (0,1)$] Let $f$ be given by the expression in \eqref{eq:DefD}. Then, $F(s)=1-s-\k \log(s)$. Thus, for $\k\ll 1$ and $\l\sim 1$, one has
$$
s_{-\infty}\approx \exp\left (-\tfrac{1-\l}{\k\l}\right ).
$$
For the parameters $\l=0.42$, $\g=0.4$ and $\k=0.01$ used in \cite{eberl2017spatially}, we estimate that
$$
s_{-\infty}\approx 10^{-60},
$$ 
which is negligible for all practical purposes. This suggests that in this parameter regime, the substrate is fully depleted after the TW has passed. In \Cref{sec:NumCont}, we provide an example of parameters for which $s_{-\infty}\approx 0.11$, and hence, there remains a significant level of residual  substrates.
\end{remark}
Finally, substituting \eqref{eq:omeq} into  \eqref{eq:ODE2} we get the reduced autonomous dynamical system with only two unknowns $M$ and $S$, i.e., 
\begin{subequations}\label{eq:DS}
\begin{align}
&\dz M= \tfrac{v}{\g\, D(M)}[(1-S-\l F(S)) -\g M],
\label{eq:DSm}\\[.5em]
& \dz S= \tfrac{\g}{v}\, f(S)\,M\label{eq:DSs}.
\end{align}
\end{subequations}
The revised boundary conditions for this system are
\begin{align}\label{eq:BC}
M(-\infty)=0,\; S(-\infty)=s_{-\infty} \text{ and }  M(\xi)=0,\; S(\xi)=1 \text{ for all }\xi\geq 0.
\end{align}
This will be the main system analysed in this paper.

\begin{remark}[The flux conditions at $\xi=0$ and $\xi=-\infty$]
Observe that any solution $(M,S)$ of the system \eqref{eq:DS}--\eqref{eq:BC}, automatically satisfies the boundary condition for the flux, i.e., $D(M)\dz M=0$ at $\xi=0$ and $\xi=-\infty$. 
\end{remark}

\subsection{Main theorem}\label{sec:mainTheo}
For the rest of this study, we focus on the following parametric regime: for $F$ defined in \eqref{eq:DefF}, and $g=\mathbb{I}+\l F$, let $\l,\g\in (0,1)$ be such that
\begin{align}\label{eq:parameter}
g(f^{-1}(\l))= f^{-1}(\l) + \l F(f^{-1}(\l))< 1-\g.
\end{align}
Observe that, the function $g(y)$ takes its minimum value $g_{\min}$ at $y=f^{-1}(\lambda)$.  The condition \ref{prop:f} then guarantees that $f^{-1}(\l)\in (0,1)$.  The existence of $s_{-\infty}\in (0,1)$ in \eqref{eq:Sinf} proves that $g_{\min}<1$. Assumption \eqref{eq:parameter} enforces a stronger condition, i.e., that $g_{\min}<1-\g$. 

For the function $f(s)=s\slash (\k + s)$ the condition can be stated in a more compact form as
\begin{align}\label{eq:parameter_2}
0<\g + \l + \k\l\,(1-\log(\k\l))\leq 1.
\end{align}
We introduce the following important integral:
\begin{subequations}\label{eq:DefG}
\begin{align}
\Gf(s)&:=\int_s^1 (\vr+\l F(\vr)-(1-\g))\, \frac{\dd\vr}{f(\vr)}\label{eq:DefGa}\\
&\overset{\eqref{eq:DefF}}=\int_s^1\frac{\vr}{f(\vr)}\,\dd \vr +\frac{\l}{2}\, F^2(s)-(1-\g)\,F(s).\label{eq:DefGb}
\end{align}
\end{subequations}
This representation follows using $F'=1\slash f$ and $F(1)=0$ from \eqref{eq:DefF}.
The shape of the $\Gf$-integral is shown in \Cref{fig:G}. Observe that, $\Gf(1)=0$ and $\Gf(s)$ is a decreasing function for $s>s^*$, where $s=s^*$ solves $s+ \l F(s)=1-\g$. Hence, $\Gf> 0$ in an interval $(s_g,1)$ where $s_g\in (0, s^*)$. Also from \eqref{eq:DefGb}, $\Gf(s)\to +\infty$ as $s\searrow 0$ since $F(s)\to \infty$ in this case, see \eqref{eq:DefF}. Hence, depending on the parameter values, $\Gf$ might or might not have a negative part. This has a profound effect on the existence of TWs as stated in our main theorem below.

\begin{figure}
\centering
\begin{tikzpicture}
[xscale=6,yscale=30,domain=0:1,samples=100]
\draw[ultra thick,->] (0,0)--(1.1,0) node[scale=1.2,above] {$s$};
\draw[ultra thick,->] (0,-.02)--(0,.15) node[scale=1.2,left] {$\Gf$};
\node[scale=.8, below left] at (0, 0) {$0$};

\node[scale=1, above right] at (0.5,0.13) {$\l_1=\g_1=.4$};

\node[scale=1, above right] at (0.605,0.066) {$\l_2=\g_2=.3$};

\draw[dashed] (0.535,0.07059)--(.535,0) node[below, scale=1] {$s^*_{2}$};

\draw[dashed] (0.295,0.14635)--(0.295,0) node[below, scale=1] {$s^*_{1}$};

\draw[ultra thick,dashdotted,red] (1,0)--(0.985,0.0041264)--(0.975,0.0081939)--(0.965,0.012202)--(0.955,0.016152)--(0.945,0.020043)--(0.935,0.023875)--(0.925,0.027648)--(0.915,0.031362)--(0.905,0.035017)--(0.895,0.038614)--(0.885,0.042151)--(0.875,0.04563)--(0.865,0.04905)--(0.855,0.052411)--(0.845,0.055713)--(0.835,0.058956)--(0.825,0.06214)--(0.815,0.065266)--(0.805,0.068332)--(0.795,0.07134)--(0.785,0.074289)--(0.775,0.077179)--(0.765,0.08001)--(0.755,0.082782)--(0.745,0.085495)--(0.735,0.08815)--(0.725,0.090745)--(0.715,0.093282)--(0.705,0.09576)--(0.695,0.098179)--(0.685,0.10054)--(0.675,0.10284)--(0.665,0.10508)--(0.655,0.10726)--(0.645,0.10939)--(0.635,0.11145)--(0.625,0.11346)--(0.615,0.11541)--(0.605,0.1173)--(0.595,0.11913)--(0.585,0.1209)--(0.575,0.12261)--(0.565,0.12426)--(0.555,0.12585)--(0.545,0.12739)--(0.535,0.12886)--(0.525,0.13028)--(0.515,0.13163)--(0.505,0.13293)--(0.495,0.13417)--(0.485,0.13534)--(0.475,0.13646)--(0.465,0.13752)--(0.455,0.13852)--(0.445,0.13946)--(0.435,0.14034)--(0.425,0.14116)--(0.415,0.14193)--(0.405,0.14263)--(0.395,0.14327)--(0.385,0.14385)--(0.375,0.14437)--(0.365,0.14483)--(0.355,0.14523)--(0.345,0.14557)--(0.335,0.14585)--(0.325,0.14606)--(0.315,0.14622)--(0.305,0.14631)--(0.295,0.14635)--(0.285,0.14632)--(0.275,0.14622)--(0.265,0.14607)--(0.255,0.14585)--(0.245,0.14556)--(0.235,0.14522)--(0.225,0.1448)--(0.215,0.14432)--(0.205,0.14378)--(0.195,0.14316)--(0.185,0.14248)--(0.175,0.14172)--(0.165,0.14089)--(0.155,0.13999)--(0.145,0.13901)--(0.135,0.13795)--(0.125,0.1368)--(0.115,0.13557)--(0.105,0.13424)--(0.095,0.1329)--(0.085,0.13215)--(0.075,0.13206)--(0.065,0.13262)--(0.055,0.1338)--(0.045,0.13552)--(0.035,0.13771)--(0.025,0.14015)--(0.015,0.14231);

\draw[ultra thick,blue]  (1,0)--(0.985,0.0031324)--(0.975,0.0061943)--(0.965,0.0091858)--(0.955,0.012107)--(0.945,0.014957)--(0.935,0.017737)--(0.925,0.020447)--(0.915,0.023085)--(0.905,0.025653)--(0.895,0.028151)--(0.885,0.030578)--(0.875,0.032934)--(0.865,0.035219)--(0.855,0.037434)--(0.845,0.039577)--(0.835,0.04165)--(0.825,0.043652)--(0.815,0.045583)--(0.805,0.047443)--(0.795,0.049232)--(0.785,0.050949)--(0.775,0.052596)--(0.765,0.054172)--(0.755,0.055676)--(0.745,0.057109)--(0.735,0.05847)--(0.725,0.05976)--(0.715,0.060979)--(0.705,0.062126)--(0.695,0.063201)--(0.685,0.064205)--(0.675,0.065137)--(0.665,0.065997)--(0.655,0.066785)--(0.645,0.067501)--(0.635,0.068145)--(0.625,0.068716)--(0.615,0.069215)--(0.605,0.069642)--(0.595,0.069996)--(0.585,0.070278)--(0.575,0.070487)--(0.565,0.070622)--(0.555,0.070685)--(0.545,0.070674)--(0.535,0.07059)--(0.525,0.070432)--(0.515,0.0702)--(0.505,0.069895)--(0.495,0.069515)--(0.485,0.069061)--(0.475,0.068532)--(0.465,0.067928)--(0.455,0.067249)--(0.445,0.066494)--(0.435,0.065664)--(0.425,0.064757)--(0.415,0.063774)--(0.405,0.062715)--(0.395,0.061577)--(0.385,0.060363)--(0.375,0.05907)--(0.365,0.057698)--(0.355,0.056247)--(0.345,0.054715)--(0.335,0.053104)--(0.325,0.051411)--(0.315,0.049635)--(0.305,0.047777)--(0.295,0.045835)--(0.285,0.043808)--(0.275,0.041694)--(0.265,0.039493)--(0.255,0.037202)--(0.245,0.034821)--(0.235,0.032347)--(0.225,0.029778)--(0.215,0.027113)--(0.205,0.024347)--(0.195,0.021478)--(0.185,0.018503)--(0.175,0.015416)--(0.165,0.012214)--(0.155,0.0088906)--(0.145,0.0054383)--(0.135,0.0018489)--(0.125,-0.0018877)--(0.115,-0.0057842)--(0.105,-0.0098566)--(0.095,-0.013875)--(0.085,-0.016367)--(0.075,-0.017117)--(0.065,-0.016177)--(0.055,-0.013617)--(0.045,-0.0095467)--(0.035,-0.0041461)--(0.025,0.01)--(0.015,0.03)--(0.005,0.05);
\end{tikzpicture}
\caption{The plot of $\Gf$ as defined in \eqref{eq:DefG} for $f(s)=s\slash (\k+s)$, $\l_1=\g_1=0.4$ and $\l_2=\g_2=0.3$. The points $s=s^{*}\in (0,1)$ marked are the solutions of $s+ \l F(s)=1-\g$.}\label{fig:G}
\end{figure}

\begin{theorem}[Existence of the TW solution]
Assume \ref{prop:D}--\ref{prop:f}. Let \eqref{eq:parameter} be satisfied and let $\Gf(s)>0$ for all $s\in (s_{-\infty},1)$. Then there exists a unique $v>0$ such that a travelling wave solution $(M,S):\R\to [0,1]^2$ with $D(M)\dz M\in C(\R)$ and $S\in C^1(\R)$ exists satisfying \eqref{eq:DS}--\eqref{eq:BC}.\label{theo:main}
\end{theorem}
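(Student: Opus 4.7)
The plan is a shooting argument in the wave speed $v$ carried out in the $(M,S)$--phase plane of the reduced planar system \eqref{eq:DS}. The equilibria on the axis $\{M=0\}$ are exactly $E_-:=(0,s_{-\infty})$ and $E_+:=(0,1)$, the two roots of the convex function $g(s):=s+\l F(s)-1$; under \eqref{eq:parameter} one has $g_{\min}<1-\g<1$ and $s_{-\infty}<f^{-1}(\l)<1$, so the $M$--nullcline $\Cf:\,\g M=1-S-\l F(S)$ is a smooth arc joining $E_-$ to $E_+$ lying strictly in $\{0<M<1\}$. Since $\dz S\geq 0$ along orbits with equality iff $M=0$, orbits in $\{M>0\}$ may be parametrized by $S$. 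The sought TW corresponds to a unique orbit $\mathcal{O}(v)$ that leaves $E_-$ as $\xi\to-\infty$ (diffusive tail), crosses $\Cf$ exactly once at the unique maximum of $M$, and arrives at $E_+$ at finite $\xi=0$ (sharp front); the latter is enabled by the degeneracy $D(M)\sim \d M^{a}$ with $a>1$.

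I would first perform a direct asymptotic analysis at the two equilibria, which takes the place of the stable/unstable manifold theorem since the linearisations of \eqref{eq:DS} at $E_\pm$ are non--hyperbolic. At $E_-$, balancing $v\dz M$ against $(f(S)-\l)M$ in \eqref{eq:ODE1m} yields an exponential tail $M\sim Ce^{(r/v)\xi}$ with $r:=\l-f(s_{-\infty})>0$ (positive because $g'(s_{-\infty})<0$); at $E_+$, the balance $D(M)\dz M\sim -vM$ yields a sharp front $M\sim(-av\xi/\d)^{1/a}$. Modulo $\xi$--translation this selects a \emph{unique} one--sided orbit $\mathcal{O}(v)$ for each $v>0$; let $S^{*}(v)\in(s_{-\infty},+\infty]$ denote the first $S$--value at which $\mathcal{O}(v)$ returns to $\{M=0\}$. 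The key algebraic tool is the first--integral identity obtained by multiplying \eqref{eq:DSm} by $D(M)M$ and using \eqref{eq:DSs} to convert from $\xi$ to $S$ as the independent variable: with $P(M):=\int_{0}^{M}D(m)m\,\dd m$, every orbit satisfies
\begin{equation*}
P(M(S))=\tfrac{v^{2}}{\g^{2}}\int_{s_{-\infty}}^{S}\tfrac{(1-\vr-\l F(\vr))-\g M(\vr)}{f(\vr)}\,\dd\vr.
\end{equation*}
Setting $S=1$ and rewriting via \eqref{eq:DefG} turns the condition $S^{*}(v)=1$ into $\Gf(s_{-\infty})/\g=\int_{s_{-\infty}}^{1}(1-M(S))/f(S)\,\dd S$, and the hypothesis $\Gf>0$ on $(s_{-\infty},1)$ is exactly what makes this admissible (and what keeps $P\geq 0$, hence $M<1$, along the orbit). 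Combining continuity of $v\mapsto S^{*}(v)$ (continuous dependence after the time change $\dd\xi=D(M)\,\dd\tau$) with strict monotonicity obtained from an ordering--of--orbits argument in the scaled plane $(\tilde P,S)$, $\tilde P:=P/v^{2}$, and with the asymptotics that $S^{*}(v)>1$ for all sufficiently small $v$ (slow orbits stay small and overshoot past $S=1$) and $S^{*}(v)\to s_{-\infty}^{+}$ as $v\to+\infty$ (fast orbits peak and return immediately), the intermediate value theorem produces a unique $v^{*}>0$ with $S^{*}(v^{*})=1$.

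The principal obstacle is establishing the strict monotonicity of $v\mapsto S^{*}(v)$: because $\mathcal{O}(v)$ is non--monotone in $\xi$ and crosses $\Cf$ exactly once, the standard Sturmian comparison for monotone TWs (as used for the scalar porous--medium equation with Fisher--type reaction) cannot be invoked directly, and orbits for different wave speeds must be compared separately in the two regions $\{M<\Cf(S)\}$ and $\{M>\Cf(S)\}$, with the two comparisons stitched together at the nullcline crossing. The non--hyperbolicity at $E_\pm$ is handled by the explicit asymptotic expansions above, and the regularity claims $D(M)\dz M\in C(\R)$ and $S\in C^{1}(\R)$ follow from those expansions together with \ref{prop:D}--\ref{prop:f}.
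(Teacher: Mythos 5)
Your broad strategy (shooting in $v$ with a first integral in the phase plane) is the right one, and the asymptotics you write at $(0,1)$ ($M\sim(-av\xi/\d)^{1/a}$) and the first--integral identity (which is \eqref{eq:McEqB} integrated) are both correct. But the shooting function $S^*(v)$ you propose does not behave as you claim, and this breaks the intermediate--value argument. Because $\ell(s_{-\infty})=\ell(1)=0$ while $\ell>0$ on $(s_{-\infty},1)$, the orbit emanating from $E_-$ crosses the $M$--nullcline exactly once (at some $S^\flat>s^*$) and thereafter satisfies $M(\xi)>\ell(S(\xi))>0$ for all $s<1$; it therefore can only meet $\{M=0\}$ at the equilibrium $(0,1)$ itself. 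So $S^*(v)$ never takes values in $(s_{-\infty},1)$: for each $v$ it is either equal to $1$ (orbit converges to $(0,1)$, in $\tau$--time always asymptotically) or is undefined (orbit exits $\calR$ through $\{s=1\}$ at positive $M$). In particular the asymptotic $S^*(v)\to s_{-\infty}^+$ as $v\to\infty$ is false, and there is no continuous strictly monotone $S^*$ to which the IVT can be applied.

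The actual dichotomy that must be resolved is different, and your proposal does not address it. Both the TW orbit and a whole family of nearby orbits converge to $(0,1)$; what singles out the TW is that it reaches $(0,1)$ at a \emph{finite} $\xi$ (equivalently, $\dd\Mc/\dd s\to-\infty$ as $s\nearrow 1$). The paper's proof shoots backwards from $(0,1)$ by taking the $\e\searrow0$ limit of orbits through $(\e,1)$ (\Cref{theo:01}), proves that this is the unique finite--$\xi$ arrival (\Cref{pros:Unique}), and then shows that the $\Mc$--map of this distinguished orbit is continuous and strictly increasing in $v$, tending to $0$ as $v\to0$ and to $1$ (under $\Gf>0$) as $v\to\infty$ (\Cref{theo:order}). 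The shooting variable is not ``where the orbit returns to $\{M=0\}$'' but rather through which boundary segment of $\calR$ (either $\{m=0,\,s>s_{-\infty}\}$ or $\{s=s_{-\infty},\,m>0\}$) the distinguished orbit enters as $\xi\to-\infty$; these two regimes meet at $(0,s_{-\infty})$ for a unique $v$. Shooting forward from $E_-$ along the centre manifold, as you propose, is in principle also possible, but you would then have to \emph{prove} that this orbit is unique (the centre manifold at a non--hyperbolic point is not automatically unique) and, more seriously, replace $S^*(v)$ by a shooting functional that actually distinguishes sharp from asymptotic arrival at $(0,1)$; neither is done here. A secondary remark: the hypothesis $\Gf>0$ is not needed to keep $P\geq0$ or $M<1$ (boundedness of $M$ away from $1$ is a separate argument via the singularity $D\sim(1-m)^{-b}$); it is needed to guarantee $\Mc_v\to1$ as $v\to\infty$, which is what eventually forces the orbit to exit through $\{s=s_{-\infty}\}$ for large $v$.
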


\begin{remark}[Conditions on existence]
The condition \eqref{eq:parameter}, used in \Cref{theo:main}, provides upper bounds for  $\l$ and $\g$, whereas, the condition $\Gf(s)>0$ for $s\in (s_{-\infty},1)$ provides lower bounds for $\l$ and $\g$ for the existence of the TW solutions. We show in \Cref{pros:NonExistence} that the latter condition is also a necessary condition.
\end{remark}

\section{The existence of travelling waves} \label{sec:TW existence}
In this section we prove \Cref{theo:main} by analysing the dynamical system \eqref{eq:DS}.
\subsection{The phase--plane}
For a given orbit $\xi\mapsto (M,S)$ satisfying \eqref{eq:DS}, the scaled TW coordinate $\t$ is defined by the coordinate transform
\begin{align}\label{eq:DefTau}
\t(\xi):=\int_{0}^\xi \tfrac{\dd\vr}{D(M(\vr))},\quad  \text{ implying } \dz \t= \tfrac{1}{D(M(\xi))}.
\end{align}
Moreover, to shorten notation, we introduce
\begin{align}\label{eq:DefEll}
\ell(s;\l,\g):=\frac{1}{\g}[(1-s)-\l F(s)],
\end{align}
which from \ref{prop:f} and \eqref{eq:DefF} has the properties
\begin{subequations}\label{eq:propEll}
\begin{align}\label{eq:propElla}
&\ell\in C^1((0,1]),\; \ell'(s)>0 \text{ for } s<s_{\mathrm{M}}:=f^{-1}(\l), \text{ and } \ell'(s)<0 \text{ for } s>s_{\mathrm{M}};\\
&\ell(1)=0,\text{ and } \lim\limits_{s\searrow 0}\ell(s)=-\infty.\label{eq:propEllb}
\end{align}
\end{subequations}
Observe that, in terms of the function $g$ introduced in \Cref{sec:Preliminiary}, $\ell(s)=\g^{-1}[1-g(s)]$ (recall that $g=\mathbb{I}+\l\,F$). In the following sections, we will only use the properties \eqref{eq:propEll}  of $\ell$, and will not further use $F$ or $g$, to keep the notation as clear as possible. From \eqref{eq:DefGa}, we additionally have that
\begin{align}\label{eq:relationGell}
\Gf(s)=\g\int_s^1 \left (\frac{1-\ell(\vr)}{f(\vr)}\right )\dd\vr.
\end{align}

With the coordinate transform \eqref{eq:DefTau} and the definition \eqref{eq:DefEll}, system \eqref{eq:DS} is re-written as
\begin{subequations}\label{eq:GE}
\begin{align}
&\dt M= v\,[\ell(S)- M],
\label{eq:GEm}\\[.5em]
& \dt S= \tfrac{\g}{v}\,f(S)\, M D(M)\label{eq:GEs}.
\end{align}
\end{subequations}

\begin{figure}[h!]
\begin{subfigure}{.5\textwidth}
\begin{tikzpicture}
[xscale=6,yscale=4.3,domain=0:1,samples=100]
\draw[ultra thick,->] (0,0)--(1.1,0) node[scale=1.2,above] {$m$};
\draw[ultra thick,->] (0,0)--(0,1) node[scale=1.2,above right] {$s$};
\node[scale=.8, below left] at (0, 0) {$0$};

\draw[dashed] (1,1)--(0,1) node[scale=.8,left] {$1$};
\draw[dashed] (1,1)--(1,0) node[scale=.8,below] {$1$};

\draw[dotted] (1,0.13)--(0,0.13) node[scale=.9,above left] {$s^*$};
\draw[dotted] (1,0.05)--(0,0.05) node[scale=.9,left] {$s_*$};

\draw[thick,dashdotted,red] (0.85525,0.01)--(0.96467,0.03)--(1,0.05)--(1.0134,0.07)--(1.0161,0.09)--(1.0124,0.11)--(1.0044,0.13)--(0.99346,0.15)--(0.98023,0.17)--(0.96525,0.19)--(0.94886,0.21)--(0.93132,0.23)--(0.91283,0.25)--(0.89352,0.27)--(0.87353,0.29)--(0.85293,0.31)--(0.83181,0.33)--(0.81022,0.35)--(0.78822,0.37)--(0.76586,0.39)--(0.74316,0.41)--(0.72016,0.43)--(0.69689,0.45)--(0.67337,0.47)--(0.64962,0.49)--(0.62566,0.51)--(0.60151,0.53)--(0.57717,0.55)--(0.55267,0.57)--(0.52802,0.59)--(0.50322,0.61)--(0.47828,0.63)--(0.45322,0.65)--(0.42804,0.67)--(0.40275,0.69)--(0.37735,0.71)--(0.35185,0.73)--(0.32625,0.75)--(0.30057,0.77)--(0.2748,0.79)--(0.24895,0.81)--(0.22302,0.83)--(0.19702,0.85)--(0.17095,0.87)--(0.14482,0.89)--(0.11862,0.91)--(0.092356,0.93)--(0.066037,0.95)--(0.039662,0.97)--(0.013234,0.99);

\draw[thick,dashdotted,red] (0.28935,0.0001)--(0.42749,0.0003)--(0.49156,0.0005)--(0.53367,0.0007)--(0.56504,0.0009)--(0.59004,0.0011)--(0.6108,0.0013)--(0.62854,0.0015)--(0.64402,0.0017)--(0.65774,0.0019)--(0.67006,0.0021)--(0.68123,0.0023)--(0.69145,0.0025)--(0.70086,0.0027)--(0.70957,0.0029)--(0.71768,0.0031)--(0.72527,0.0033)--(0.7324,0.0035)--(0.73911,0.0037)--(0.74545,0.0039)--(0.75146,0.0041)--(0.75717,0.0043)--(0.76261,0.0045)--(0.7678,0.0047)--(0.77276,0.0049)--(0.77751,0.0051)--(0.78207,0.0053)--(0.78645,0.0055)--(0.79066,0.0057)--(0.79471,0.0059)--(0.79862,0.0061)--(0.8024,0.0063)--(0.80604,0.0065)--(0.80957,0.0067)--(0.81299,0.0069)--(0.8163,0.0071)--(0.81951,0.0073)--(0.82263,0.0075)--(0.82565,0.0077)--(0.82859,0.0079)--(0.83145,0.0081)--(0.83424,0.0083)--(0.83695,0.0085)--(0.83959,0.0087)--(0.84216,0.0089)--(0.84467,0.0091)--(0.84712,0.0093)--(0.84951,0.0095)--(0.85185,0.0097)--(0.85413,0.0099);

\node[scale=.8, above,red] at (0.3770, 0.5) {$ m
=\ell(s)$};

\draw[thick,->,teal] (0.3770,0.2228)--(0.4949,0.2947);
\draw[thick,->,teal] (0.89,0.0049)--(0.89,0.1);
 \draw[thick,->,teal] (0,0.3)--(0.15,0.3);
 \draw[thick,->,teal] (0,0.5418)--(0.0968,0.5418);
  \draw[thick,->,teal]  (0,0.0982)--(0.1023,0.0982);
  \draw[thick,->,teal]   (0.8396,0.2461)--(0.8396,0.3706);
   \draw[thick,->,teal]  (0.7161,0.3745)--(0.7180,0.4757);
   \draw[thick,->,teal]  (0,0.8006)--(0.0525, 0.8);
    \draw[thick,->,teal]      (0.7585,0.7247)--(0.6719,0.8298);
    \draw[thick,->,teal]         (0.0986,0.87)--(0.0968,0.95);
    \draw[thick,->,teal]   (0.2848,0.964)--(0.2037,0.9912);
    \draw[thick,->,teal] (.5797,0.9232)--(0.4710,0.9971);
    \draw[thick,->,teal]    (.8544,0.8804)--(.7806,.9854);
        \draw[thick,->,teal]     (0.6498,0.1411)--(0.6903,0.2228);
                \draw[thick,->,teal]         (0.8710,0.5924)--(0.8488,0.6955);
                \draw[thick,->,teal]        
(0.4372,0.7597)--(0.3561,0.7772);

\draw[ultra thick, blue,->] (0,0)--(0.01,6.1039e-05)--(0.02,0.0002442)--(0.03,0.00054962)--(0.04,0.00097752)--(0.05,0.0015282)--(0.06,0.0022021)--(0.07,0.0029997)--(0.08,0.0039216)--(0.09,0.0049685)--(0.1,0.0061412)--(0.11,0.0074406)--(0.12,0.0088677)--(0.13,0.010424)--(0.14,0.01211)--(0.15,0.013927)--(0.16,0.015877)--(0.17,0.017962)--(0.18,0.020183)--(0.19,0.022542)--(0.2,0.025041)--(0.21,0.027683)--(0.22,0.030469)--(0.23,0.033403)--(0.24,0.036488)--(0.25,0.039725)--(0.26,0.043119)--(0.27,0.046673)--(0.28,0.050391)--(0.29,0.054277)--(0.3,0.058335)--(0.31,0.06257)--(0.32,0.066987)--(0.33,0.071593)--(0.34,0.076392)--(0.35,0.081393)--(0.36,0.086601)--(0.37,0.092026)--(0.38,0.097675)--(0.39,0.10356)--(0.4,0.10969);

\draw[ultra thick, blue,->] 
(0.4,0.10969)--(0.41,0.11607)--(0.42,0.12273)--(0.43,0.12967)--(0.44,0.13691)--(0.45,0.14447)--(0.46,0.15237)--(0.47,0.16063)--(0.48,0.16928)--(0.49,0.17836)--(0.5,0.18789)--(0.51,0.19793)--(0.52,0.20852)--(0.53,0.21973)--(0.54,0.23163)--(0.55,0.24433)--(0.56,0.25794)--(0.57,0.27263)--(0.58,0.28863)--(0.59,0.30626)--(0.6,0.32601)--(0.61,0.34871)--(0.62,0.37598)--(0.63,0.41196)--(0.64,0.5)--(0.64,0.5)--(0.63,0.58804)--(0.62,0.62402)--(0.61,0.65129)--(0.6,0.67399)--(0.59,0.69374)--(0.58,0.71137)--(0.57,0.72737)--(0.56,0.74206)--(0.55,0.75567) node[above,scale=.8] {$(M,S)$};
\end{tikzpicture}
\end{subfigure}
\begin{subfigure}{.5\textwidth}
\begin{tikzpicture}
[xscale=7,yscale=400,domain=0:1,samples=100]
\draw[ultra thick,->] (0.2,-.001)--(1.1,-.001) node[scale=1.2,above] {$m$};
\draw[ultra thick,->] (0.2,-.001)--(0.2,.01) node[scale=1.2,above right] {$s$};
\node[scale=.8, below left] at (0.2, -.001) {$0$};

\draw[dashed] (1,.01)--(0.2,.01);
\draw[dashed] (1,.01)--(1,-.001) node[scale=.8,below] {$1$};
\draw[dashed] (0.2,.00007)--(1.1,.00007);

\draw[thick,dashdotted,red] (0.2,0)--(0.28935,0.0001)--(0.42749,0.0003)--(0.49156,0.0005)--(0.53367,0.0007)--(0.56504,0.0009)--(0.59004,0.0011)--(0.6108,0.0013)--(0.62854,0.0015)--(0.64402,0.0017)--(0.65774,0.0019)--(0.67006,0.0021)--(0.68123,0.0023)--(0.69145,0.0025)--(0.70086,0.0027)--(0.70957,0.0029)--(0.71768,0.0031)--(0.72527,0.0033)--(0.7324,0.0035)--(0.73911,0.0037)--(0.74545,0.0039)--(0.75146,0.0041)--(0.75717,0.0043)--(0.76261,0.0045)--(0.7678,0.0047)--(0.77276,0.0049)--(0.77751,0.0051)--(0.78207,0.0053)--(0.78645,0.0055)--(0.79066,0.0057)--(0.79471,0.0059)--(0.79862,0.0061)--(0.8024,0.0063)--(0.80604,0.0065)--(0.80957,0.0067)--(0.81299,0.0069)--(0.8163,0.0071)--(0.81951,0.0073)--(0.82263,0.0075)--(0.82565,0.0077)--(0.82859,0.0079)--(0.83145,0.0081)--(0.83424,0.0083)--(0.83695,0.0085)--(0.83959,0.0087)--(0.84216,0.0089)--(0.84467,0.0091)--(0.84712,0.0093)--(0.84951,0.0095)--(0.85185,0.0097)--(0.85413,0.0099);

\draw[ultra thick,  blue, domain=0.2:.35,->] plot(\x,{.1*(.2-\x)*(.2-\x)});
\draw[ultra thick,  blue, domain=0.35:.5,->] plot(\x,{.1*(.2-\x)*(.2-\x)}) node[right,scale=.8] {$(M,S)$};

\node[scale=.8, left] at (0.2,.00007) {$s_{-\infty}$};

\node[scale=.8, red, right] at (0.5,0.0038) {$ m=\ell(s)$};
\draw[thick,->,teal]    (0.5694,0.0059)--(0.60,0.0075);
\draw[thick,->,teal]    (0.1973,0.0090)--(0.3231,0.0090);
\draw[thick,->,teal]    (0.1973,0.009)--(0.3263,0.009);
\draw[thick,->,teal]    (0.1995,0.0052)--(0.2747,0.0052);
\draw[thick,->,teal] (0.1984,0.0020)--(0.2392,0.0020);
\draw[thick,->,teal]  (0.3941,-0.0003)--(0.3941,0.0007);
\draw[thick,->,teal]       (0.5941,0.0005)--(0.5941,0.002);
\draw[thick,->,teal]     (0.77,0.004)--(0.77,0.006);
\draw[thick,->,teal]      (0.8392,0.0013)--(0.7694,0.0025);
\end{tikzpicture}
\end{subfigure}
\caption{(left) The direction of orbits of the dynamical system \eqref{eq:DS} in the phase--plane $[0,1)\times (0,1]$. (right) A zoomed view into the phase--plane near the equilibrium point $(0,s_{-\infty})$. An orbit $(M,S)$ originating from $(0,s_{-\infty})$ is also shown.}\label{fig:OrbitArrow}
\end{figure}
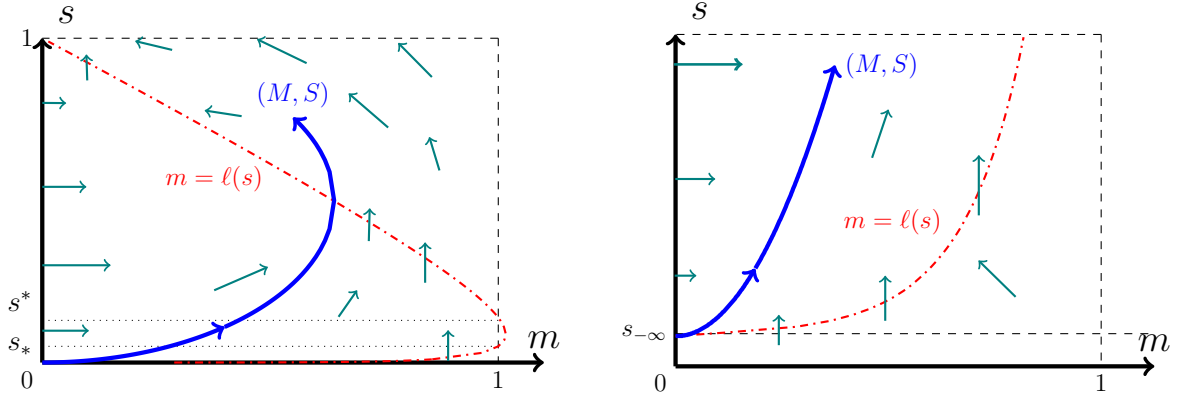

Since this is an autonomous system, we look into the phase--plane $[0,1)\times (0,1]$. Henceforth, $(m,s)$ will represent a point in this phase--plane, and $(M,S)$ will denote the orbits in this plane. The directions of the orbits are shown in \Cref{fig:OrbitArrow}. The line $m=\ell(s)$ is highlighted and represents the nullcline of $M$, i.e., the points where $\dt M=0$. Due to the restriction \eqref{eq:parameter} imposed, this nullcline intersects the line $m=1$ at precisely two points $s_*$ and $s^*$. This follows from the properties of $\ell$ in \eqref{eq:propEll} along with the observation that 
$$\ell(s_{\mathrm{M}})=\g^{-1}[1-s_{\mathrm{M}}-\l F(s_{\mathrm{M}})]\overset{\eqref{eq:parameter}}>1,\quad \text{ where }  s_{\mathrm{M}}=f^{-1}(\l).$$
Hence, the $s$-coordinates $s_*,\,s^*\in (0,1)$ satisfying
\begin{align}\label{eq:SupastSdwnast}
s_*:=\min\{s\in (0,1): \ell(s)=1\}<s_{\mathrm{M}},\quad s^*:=\max\{s\in (0,1): \ell(s)=1\}>s_{\mathrm{M}},
\end{align}
exist (consistent with the definition of $s^*\in (0,1)$ below \eqref{eq:DefG}). Consequently, we have the following:

\begin{lemma}[The existence of orbits] Let $v>0$ be given, \eqref{eq:parameter} be satisfied and let $(M_0,S_0)\in \calR :=[0,1)\times [s_{-\infty},1].$ Then, there exists a unique orbit $\t\mapsto (M,S)\in (C^1(\R))^2$, satisfying \eqref{eq:GE} with $(M,S)(0)=(M_0,S_0)$. The equilibrium  points of the system \eqref{eq:GE} are $(0,s_{-\infty})$ and $(0,1)$ with $s_{-\infty}\in (0,1)$ satisfying \eqref{eq:Sinf}. If $(M_0,S_0)$ is not an equilibrium point, then
\begin{enumerate}[label=(\roman*)]
\item for $\t>0$, the orbit either exits $\calR$ through the line $\{s=1\}$ or ends at $(0,1)$.
\item for $\t<0$, the orbit enters $\calR$ either though the line segment $\{m=0,s\geq s_{-\infty}\}$ or through $\{s=s_{-\infty}\}$.
\end{enumerate}
\label{lemma:orbit_exists}
\end{lemma}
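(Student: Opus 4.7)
The plan is to use phase--plane arguments, building on local ODE theory and then ruling out bad behaviour on $\partial\calR$ one piece at a time. First, since by \ref{prop:D}--\ref{prop:f} the right-hand side of \eqref{eq:GE} is $C^1$ on the strip $[0,1) \times \R$, Picard--Lindel\"of gives local existence and uniqueness of a $C^1$ orbit through any $(M_0, S_0) \in \calR$. For the equilibria, $\dt S = 0$ forces $M = 0$ (since $f(s)$, $D(m)$, and $m$ are all strictly positive when $s, m > 0$), and then $\dt M = 0$ reduces to $\ell(S) = 0$; by the shape of $\ell$ recorded in \eqref{eq:propEll} together with $\ell(s_{-\infty}) = 0$ from \eqref{eq:Sinf}, the only zeros of $\ell$ in $(0,1]$ are $s_{-\infty}$ and $1$, so the equilibria in $\calR$ are exactly $(0, s_{-\infty})$ and $(0,1)$.

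The structural backbone for what follows is the monotonicity of $S$: $\dt S \geq 0$ with equality only when $M = 0$, so $S$ is strictly increasing in forward time along any orbit through a non-equilibrium point. A boundary inspection shows the vector field points strictly inward on $\{m = 0,\; s \in (s_{-\infty}, 1)\}$ (where $\dt M = v\ell(S) > 0$) and on $\{s = s_{-\infty},\; m > 0\}$ (where $\dt S > 0$), and strictly outward on $\{s = 1,\; m > 0\}$. This already identifies the candidate inflow/outflow portions of $\partial\calR$, leaving only the singular edge $\{m\to 1^-\}$, where $D$ blows up, as the nontrivial case.

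Ruling out escape through $\{m=1\}$ is the main technical step, and I expect it to be the principal obstacle. Suppose for contradiction that $M(\tau) \to 1$ as $\tau \to \tau^*$ while $S(\tau^*) < 1$. Since $\dt M < 0$ whenever $M > \ell(S)$, this approach must occur from the region $M < \ell(S)$, which in turn pins $\ell(S)$ into the compact interval $[1, \ell(s_{\mathrm{M}})]$ along the approach. Consequently, the denominator of
\begin{equation*}
\frac{\dd S}{\dd M} \;=\; \frac{\gamma\, f(S)\, M\, D(M)}{v^{2}\,[\ell(S) - M]}
\end{equation*}
stays bounded above by $v^{2}\,\ell(s_{\mathrm{M}})$, while by \ref{prop:D} the numerator is bounded below by a positive multiple of $(1-M)^{-b}$ with $b>1$. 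Integrating in $M$ and using $\int^{1}(1-M)^{-b}\,\dd M = \infty$ forces $S$ to diverge before $M$ reaches $1$, contradicting $S\leq 1$. Hence such an approach cannot occur: $S$ necessarily attains $1$ first, which rules out exit through $\{m=1\}$ and yields global existence of the orbit in $\tau$.

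With this in hand, part (i) follows from a standard $\omega$--limit argument: if the orbit does not exit through $\{s = 1\}$, then $M$ is uniformly bounded away from $1$ (by the above), $S$ is monotone and bounded, so $(M,S)$ stays in a compact subset of $\calR$ and $S(\tau)\to S_{\infty} \in (s_{-\infty}, 1]$; LaSalle's invariance principle applied with $V=-S$ then confines the $\omega$--limit set to $\{M=0\}$, and invariance within this set forces it to be an equilibrium, so $S_\infty = 1$. For part (ii), I would reverse time and repeat: the segments $\{m=0,\; s\in(s_{-\infty},1)\}$ and $\{s=s_{-\infty},\; m>0\}$ now act as exit boundaries while $\{s=1\}$ becomes inward--pointing; the same singular--coefficient estimate, applied backward in $\tau$, excludes escape through $\{m=1\}$ (now it would force $S$ to diverge downward below $s_{-\infty}$); and the corresponding $\alpha$--limit argument handles the case where the orbit remains in $\calR$ for all $\tau<0$, forcing convergence to one of the two equilibria, both of which lie on $\{m=0,\; s\geq s_{-\infty}\}$.
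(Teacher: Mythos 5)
Your proposal is correct and follows the same overall roadmap as the paper --- Picard--Lindel\"of for local well--posedness, identification of the equilibria, the sign structure of the vector field on $\partial\calR$, and then a singular--coefficient estimate to rule out escape through $\{m=1\}$ --- but the key exclusion step is carried out differently. The paper works entirely in the $\t$--variable: given a hypothetical first hitting time $\t_1$ with $M(\t_1)=1$, it uses the uniform bound $\dt M\leq C$ to obtain $\t_1-\t_\e\geq \e/C$ for a time $\t_\e$ with $M(\t_\e)=1-\e$, then combines this with $\dt S\geq C_1\,D(1-\e)\geq C_2\,\e^{-b}$ to force $S(\t_1)-S(\t_\e)\gtrsim \e^{1-b}\to\infty$, contradicting $S\leq 1$; no monotonicity of $M$ is required. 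You instead work with the phase--plane slope $\dd S/\dd M$, bound the denominator by $v^2\ell(s_{\mathrm M})$ and the numerator below via $D(M)\gtrsim(1-M)^{-b}$, and integrate in $M$; this is a shorter route, but ``integrating in $M$'' implicitly assumes the orbit can be parametrized by $M$ near the approach, i.e.\ that $M$ is monotone there. In your setting this can be salvaged --- either by first noting that the approach must occur with $M<\ell(S)$ and hence $\dt M>0$ in the final stretch, or more cleanly by parametrizing by the strictly monotone $S$ and observing that $\dd\,\Phi(\Mc)/\dd S = v^2[\ell(S)-\Mc]/(\g f(S))$ is bounded while $\Phi(1)=\int_0^1 m\,D(m)\,\dd m=\infty$ when $b>1$, giving an immediate contradiction --- but as written it is a small gap. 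On the other hand, your LaSalle/$\omega$-- and $\alpha$--limit arguments explicitly justify the ``ends at $(0,1)$'' (resp.\ ``originates at $(0,s_{-\infty})$'') alternatives, which the paper leaves to the reader; that is a genuine improvement in completeness. One very minor imprecision: near $m=1$ one only knows $\ell(S)>M$, so $\ell(S)$ is pinned in $[M,\ell(s_{\mathrm M})]$ rather than $[1,\ell(s_{\mathrm M})]$ until the limit; fortunately only the upper bound $\ell(s_{\mathrm M})$ is used, so nothing breaks.
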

\begin{proof}
Observe that, orbits satisfying \eqref{eq:GE} are locally well-posed at any point in $[0,1)\times [s_{-\infty},1]$. This follows from the  Picard--Lindel\"of theorem since the right hand sides of \eqref{eq:GE} are locally--Lipschitz with respect to $M$ and $S$ for $M<1$ and $S\in[s_{-\infty},1]$.

\emph{(i)} The direction of the orbits implies that $(M,S)$ can exit $\calR$ through the boundaries $\{m=1\}$ or $\{s=1\}$. To rule out the line $\{m=1\}$, we assume the contrary, i.e., we suppose there exists $\t_1>0$ such that $M(\t_1)=1$ and $M(\t)<1$ for all $0<\t<\t_1$. Then, $(M,S)$ satisfies \eqref{eq:GE} for all $\t<\t_1$. It is straightforward to see that $S(\t_1)\in [s_*,s^*]$ since $\lim_{\t\nearrow \t_1}\dt M=v\,[\ell(S(\t_1))-1]$ and the definition of $\t_1$ demands that $\lim_{\t\nearrow \t_1}\dt M\geq 0$. Using the intermediate value theorem, for a given $\varepsilon<1-M_0$, there exists $\t_\varepsilon\in (0,\t_1)$ such that $M(\t_\varepsilon)=1-\varepsilon$. Moreover, observe that there exists a constant $C>0$ such that 
\begin{align}\label{eq:Mleq1contracdiction}
\dt M\leq C \text{ in } (\t_\varepsilon,\t_1),\;\text{ or, integrating in } (\t_\varepsilon,\t_1),\quad  \t_1-\t_\varepsilon\geq \frac{\varepsilon}{C}.
\end{align}
From \eqref{eq:GEs} and using \eqref{eq:Mleq1contracdiction}, for some constants $C_{1\slash 2}>0$ independent of $\varepsilon$, one has 
$$
S(\t_1)-S(\t_\varepsilon)= \int_{\t_\varepsilon}^{\t_1} \tfrac{\g}{v} f(S) M D(M) \dd \t \geq  C_1 \int_{\t_\varepsilon}^{\t_1} D(1-\varepsilon)\,\dd \t \overset{\ref{prop:D}}\geq C_2\frac{\t_1-\t_\varepsilon}{\varepsilon^b}\overset{\eqref{eq:Mleq1contracdiction}}\geq \frac{C_2}{ C\varepsilon^{b-1}}\to \infty,
$$
as $\varepsilon\to 0$ (as $b>1$). Since $S(\t_\varepsilon)>0$, this contradicts $S(\t_1)\in [s_*,s^*]$, thus proving that $(M,S)$ cannot exit through $\{m=1\}$. 

\emph{(ii)} For $\t<0$, the fact that the orbits can enter through the mentioned segments is clear. The orbit cannot enter through the boundary $\{s=1\}$ since $S$ is strictly increasing in $\calR$ for $m>0$, and $(0,1)$ is an equilibrium point. The fact that the orbit cannot enter through $\{m=1\}$ follows similarly as the proof of point (i). 
\end{proof}

\begin{remark}[The travelling wave solutions avoid the degeneracy at $m=1$]
The proof above shows that, for any TW solution $(M,S)$ satisfying \eqref{eq:DS}--\eqref{eq:BC}, there exists a constant $\varepsilon>0$ such that 
$$
0\leq M(\xi)\leq 1-\varepsilon, \text{ for all } \xi\in \R. 
$$
Hence, the degeneracy of the diffusion coefficient $D$, due to the possibility of $D(M)\to \infty$ as $M\nearrow 1$, is avoided.
\end{remark}

\subsection{The orbit connecting with $(0,1)$}\label{sec:orbit01}

For any orbit $(M,S)$ described in \Cref{lemma:orbit_exists}, $S$ is strictly increasing for all $\t\in \R$ provided $(M,S)\in \calR$ and $M>0$. Hence, for a given $s\in (s_{-\infty},1)$ there can  exist at most one $\t\in \R$ such that  $S(\t)=s$. This allows us to introduce the unique mapping $S\mapsto M$ through the following function.

\begin{definition}[The $\Mc$--map]\label{def:Mc}
For a given $(M_0,S_0)\in \calR$, let $(M,S)$ be the unique orbit $\t\mapsto (M,S)\in (C^1(\R))^2$, satisfying \eqref{eq:GE} and $(M,S)(0)=(M_0,S_0)$. Then the continuous function $\Mc:[s_{-\infty},1]\to [0,1)$ is defined as
\begin{align}\label{eq:DefMc}
\Mc(s):=\begin{cases} M(\t) &\text{ if there exists $\t\in \R$ such that } S(\t)=s,\\
0 &\text{ otherwise}.
\end{cases}
\end{align}
\end{definition}
Let us introduce the function
\begin{align}\label{eq:DefPhi}
\Phi(m):= \int_0^m \vr\, D(\vr)\,\dd\vr, \text{ such that } \Phi'(m)=m D(m)\geq 0.
\end{align}
Observe that $\Mc$ satisfies  $\Mc(S_0)=M_0$, and for all $\Mc>0$,
\begin{subequations}\label{eq:McEq}
\begin{align}\label{eq:McEqA}
\dfrac{\dd \Mc}{\dd s}= \dfrac{v^2}{\g}\dfrac{\ell(s)-\Mc(s)}{f(s)\, \Mc\, D(\Mc)}.
\end{align}
Using \eqref{eq:DefPhi}, we alternatively rewrite the equation above as
\begin{align}\label{eq:McEqB}
\dfrac{\dd \Phi(\Mc)}{\dd s}=\dfrac{v^2}{\g f(s)} [\ell(s)- \Mc(s)].
\end{align}
\end{subequations}
Our focus will be on a specific group of maps $\Mc$ which originate from $(\e,1)$.

\begin{figure}

\end{figure}

\begin{lemma}\label{lemma:LowerBound}
For fixed $v,\,\e>0$ and $(M_0,S_0)=(\e,1)$, let $\Mc^{\e}$  denote the $\Mc$-mapping in the sense of \Cref{def:Mc}. For $a>1$ introduced in \ref{prop:D}, let $\underline{\Mc}:(0,1]\to [0,1]$ solve
$$
\int_0^{\underline{\Mc}(s)} \frac{D(\vr)}{\vr^{a-1}}\, \dd\vr= \frac{v^2}{\g} F(s). 
$$ 
 Then there exists $\underline{s}\in (0,1)$ independent of $\e$, such that 
$$
\ell(s)< \underline{\Mc}(s)< \Mc^\e(s) \text{ for all } \underline{s}\leq s< 1.
$$
\end{lemma}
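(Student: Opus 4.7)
The plan is to compare $\Mc^\e$ with $\ul{\Mc}$ through the monotone transform
\begin{equation*}
\Phi_a(m):=\int_0^m \frac{D(\vr)}{\vr^{a-1}}\,\dd \vr,
\end{equation*}
which linearises the implicit definition of $\ul{\Mc}$. By \ref{prop:D}, $\Phi_a$ is strictly increasing on $[0,1)$ and $\Phi_a(m)\sim \tfrac{c_0}{2}m^2$ as $m\searrow 0$, with $c_0:=\lim_{m\searrow 0}D(m)/m^a\in\R^+$. The defining equation reads $\Phi_a(\ul{\Mc}(s))=\tfrac{v^2}{\g}F(s)$, and implicit differentiation together with $F'=-1/f$ gives $\tfrac{\dd}{\dd s}\Phi_a(\ul{\Mc})=-\tfrac{v^2}{\g f(s)}$. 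Using \eqref{eq:McEqA} and the chain rule, I would obtain, at any $s$ where $\Mc^\e(s)>0$,
\begin{equation*}
\frac{\dd}{\dd s}\Phi_a(\Mc^\e(s))=\frac{v^2}{\g f(s)}\cdot\frac{\ell(s)-\Mc^\e(s)}{\Mc^\e(s)^a}.
\end{equation*}
Setting $h(s):=\Phi_a(\Mc^\e(s))-\Phi_a(\ul{\Mc}(s))$ and subtracting gives
\begin{equation*}
h'(s)=\frac{v^2}{\g f(s)\,\Mc^\e(s)^a}\bigl[\ell(s)-\Mc^\e(s)+\Mc^\e(s)^a\bigr],
\end{equation*}
and since $\Phi_a$ is strictly increasing, proving $\ul{\Mc}<\Mc^\e$ is equivalent to proving $h>0$.

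Next I would fix $\ul{s}$ by an asymptotic analysis near $s=1$ that does not depend on $\e$. Combining $\Phi_a(m)\sim (c_0/2)m^2$ as $m\searrow 0$ with $F(s)\sim(1-s)/f(1)$ as $s\nearrow 1$ (from $F(1)=0$ and $F'(1)=-1/f(1)$), the defining equation yields $\ul{\Mc}(s)\sim K\sqrt{1-s}$ for an explicit $K>0$. On the other hand $\ell(1)=0$ and $\ell'(1)=(\l/f(1)-1)/\g<0$ by \ref{prop:f}, so $\ell(s)=O(1-s)$. Since $a>1$ one also has $\ul{\Mc}(s)^{a-1}\to 0$, hence $\ul{\Mc}(s)-\ul{\Mc}(s)^a\sim \ul{\Mc}(s)$, and therefore $\ell(s)=o(\ul{\Mc}(s)-\ul{\Mc}(s)^a)$ as $s\nearrow 1$. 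In particular one can fix $\ul{s}\in(0,1)$, depending only on $v,D,f,\l,\g$, such that
\begin{equation*}
\ell(s)<\ul{\Mc}(s)-\ul{\Mc}(s)^a\qquad\text{for all }s\in[\ul{s},1].
\end{equation*}
The first bound $\ell<\ul{\Mc}$ on $[\ul{s},1)$ follows at once.

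For the second bound, I would argue by contradiction. Note $h(1)=\Phi_a(\e)>0$ and $h$ is continuous on $[0,1]$ because so are $\Mc^\e$ (by \Cref{def:Mc}) and $\ul{\Mc}$. If the set $\{s\in[\ul{s},1):h(s)\le 0\}$ were non-empty, I would let $s_0$ be its supremum. This set is closed in $[\ul{s},1]$ by continuity, so $h(s_0)\le 0$; the fact that $h>0$ on $(s_0,1]$ then forces $h(s_0)\ge 0$ by continuity, and thus $h(s_0)=0$. Note $\ul{\Mc}(s_0)>0$ since $s_0<1$. If $\Mc^\e(s_0)=0$ then $h(s_0)=-\Phi_a(\ul{\Mc}(s_0))<0$, contradicting $h(s_0)=0$. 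Otherwise $\Mc^\e(s_0)=\ul{\Mc}(s_0)=:m_0>0$, and the displayed inequality from the previous paragraph gives $\ell(s_0)-m_0+m_0^a<0$, whence $h'(s_0)<0$; but $h(s)>h(s_0)=0$ for $s\in(s_0,1]$ forces $h'(s_0^+)\ge 0$, a second contradiction. Therefore $\ul{\Mc}(s)<\Mc^\e(s)$ throughout $[\ul{s},1)$.

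The main obstacle is not the comparison itself but upgrading the obvious inequality $\ell<\ul{\Mc}$ to the sharper bound from the second paragraph: it is the strict negativity of $\ell(s_0)-m_0+m_0^a$ at a putative first crossing that drives the sign of $h'$, and this strengthening is available only because $\ul{\Mc}(s)=O((1-s)^{1/2})$ while $\ell(s)=O(1-s)$, an asymptotic gap that stems directly from the degeneracy exponent $a>1$ in \ref{prop:D}.
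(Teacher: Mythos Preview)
Your argument is correct and tracks the paper's closely in spirit: both proofs hinge on the same sharpened inequality $\ell(s)<\underline{\Mc}(s)\bigl(1-\underline{\Mc}(s)^{a-1}\bigr)$ near $s=1$, obtained from the asymptotic disparity $\underline{\Mc}(s)\sim K\sqrt{1-s}$ versus $\ell(s)=O(1-s)$. The difference is in packaging. The paper works with the transform $\Phi(m)=\int_0^m \vr D(\vr)\,\dd\vr$ that already appears in \eqref{eq:McEqB}, writes an integral identity for $\Phi(\Mc^\e)-\Phi(\underline{\Mc})$, and shows the right-hand side stays strictly positive on $[\underline{s},1)$ by the sharpened inequality; the contradiction at a first crossing $s_1$ is then read off the integral. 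You instead choose $\Phi_a(m)=\int_0^m D(\vr)\vr^{1-a}\,\dd\vr$, which has the virtue of linearising the defining relation for $\underline{\Mc}$ (so $\Phi_a(\underline{\Mc})=\tfrac{v^2}{\g}F$ exactly), and you run a pointwise derivative argument: at a first touching point $s_0$ you get $h'(s_0)<0$, contradicting $h>0$ to the right. Your route is slightly cleaner bookkeeping; the paper's route reuses the function $\Phi$ that drives the rest of Section~3 and avoids introducing a second transform. Either way the essential analytic content is identical.
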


\begin{proof}
Observe from \ref{prop:D} that $\underline{\Mc}$ is well-defined and satisfies the ODE,
\begin{align}\label{eq:McLB}
\dfrac{\dd \Phi(\underline{\Mc})}{\dd s}= -\frac{v^2}{\g} \dfrac{\underline{\Mc}^a}{f(s)} 
\end{align}
and $\underline{\Mc}(1)=0$.
Subtracting \eqref{eq:McEqB} and integrating in $(s,1)$ one gets
\begin{align}
&\Phi(\Mc^\e(s))-\Phi(\underline{\Mc}(s))=\Phi(\e)+ \frac{v^2}{\g}\int_s^1  \left [(\Mc^\e(\vr)-\underline{\Mc}(\vr)) + (\underline{\Mc}(\vr)-\underline{\Mc}^a(\vr) -\ell(\vr))  \right ]\frac{\dd \vr}{f(\vr)} \nonumber\\
&=\Phi(\e)+\frac{v^2}{\g}\int_s^1 \frac{1}{f(\vr)} (\Mc^\e-\underline{\Mc})(\vr) + \frac{v^2}{\g}\int_s^1 \frac{\underline{\Mc}(\vr)}{f(\vr)}  \left (1-\underline{\Mc}^{a-1}(\vr) -\tfrac{\ell(\vr)}{\underline{\Mc}(\vr)}\right ). \label{eq:MlowerBound1}
\end{align}
Note that, $\ell'(1)= -\g^{-1}(1-\l\slash f(1))\overset{\ref{prop:f}}<0$, whereas
$\underline{\Mc}(s)\to 0$ as $s\to 1$, and therefore,
 $$\frac{d\underline{\Mc}}{ds}=-\frac{v^2}{\g}\frac{\underline{\Mc}^{a-1}}{f(s)\,D(\underline{\Mc})}\to-\infty 
 \text{ as } s\to 1.$$
  Hence, using L'H\^{o}pital's rule, there exists $\underline{s}\in (0,1)$ independent of $\e$, such that 
\begin{align}\label{eq:MlowerBound2}
\left (1-\underline{\Mc}^{a-1}(s) -\frac{\ell(s)}{\underline{\Mc}(s)}\right )\geq 0 \text{ for all } \underline{s}\leq s<1.
\end{align}
Observe that $\Mc^\e(s)> \underline{\Mc}(s)$ in a left neighbourhood of $s=1$, simply because $\Mc^\e(1)=\e>0=\underline{\Mc}(1)$. Then, \eqref{eq:MlowerBound1}--\eqref{eq:MlowerBound2}  imply that $\Mc^\e(s)> \underline{\Mc}(s)$ for all $\underline{s}\leq s<1$. To see this, assume the contrary, i.e., $\Mc^\e(s_1)=\underline{\Mc}(s_1)$ for some $s_1\in (\underline{s},1)$ and $\Mc^\e(s)> \underline{\Mc}(s)$ for $s_1<s<1$.
Then, from \eqref{eq:MlowerBound1}--\eqref{eq:MlowerBound2} we have $\Phi(\Mc^\e(s_1))>\Phi(\underline{\Mc}(s_1))$, thus contradicting our assumption. This concludes the proof.
\end{proof}

\begin{theorem}[Existence of an orbit connecting with $(0,1)$] Let $v>0$ be fixed and \eqref{eq:parameter} be satisfied. Let $\Mc^\e$ denote the $\Mc$-mapping introduced in \Cref{lemma:LowerBound} with $\Mc^\e(1)=\e$. Then there exists a  function $\Mc:[s_{-\infty},1]\to [0,1)$ which satisfies \eqref{eq:McEq} with $\Mc(1)=0$ and for all $s\in [s_{-\infty},1]$, $\Mc^\e(s)\to \Mc(s)$ as $\e\to 0$ (see \Cref{fig:Meps}). Moreover, define the function $\z:(s_{-\infty},1]\to (-\infty,0]$ $($the $\z$-$\mathrm{map})$ as
\begin{align}\label{eq:Defzeta}
\z(s):=-\frac{v}{\g}\int_s^1 \frac{\dd \vr}{f(\vr)\,\Mc(\vr)}.
\end{align}
Then $\z$ is differentiable and increasing with $s$ whenever $\Mc(s)>0$, and $\lim_{s\nearrow 1}\z(s)=0$. For any $s\in (s_{-\infty},1]$ and $\z(s)\in \R^-$, defining $\xi=\z(s)$, $M=\Mc(s)$ and $S=s$, the mapping $\xi\mapsto (M,S)$ solves \eqref{eq:DS}.\label{theo:01}
\end{theorem}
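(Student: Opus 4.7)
The plan is to construct $\Mc$ as the monotone pointwise limit of the family $\Mc^\e$ from Lemma~\ref{lemma:LowerBound} as $\e\searrow 0$, and then translate the resulting curve into the $\xi$-coordinate through $\z$. First I would establish that $\e\mapsto \Mc^\e(s)$ is monotone: for $0<\e_1<\e_2$, a local expansion of \eqref{eq:McEqA} at $s=1$ gives $\Mc^{\e_1}(s)<\Mc^{\e_2}(s)$ in a left neighbourhood of $1$, and the uniqueness statement of Lemma~\ref{lemma:orbit_exists} forbids the two orbits from crossing in the interior $\{M>0\}$, so the ordering propagates. Combined with the $\e$-independent lower bound $\Mc^\e(s)\geq \underline{\Mc}(s)>0$ of Lemma~\ref{lemma:LowerBound} on $[\underline{s},1)$, this gives a pointwise limit $\Mc(s):=\lim_{\e\searrow 0}\Mc^\e(s)\in [\underline{\Mc}(s),1)$ that stays strictly away from the degeneracy at $m=0$ on any compact subinterval of $[\underline{s},1)$.

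Next I would upgrade the pointwise convergence to a uniform one on compact subsets of $[\underline{s},1)$ and verify that $\Mc$ inherits \eqref{eq:McEq}. On such a compact set $K$ the right-hand side of \eqref{eq:McEqA} is Lipschitz in its first argument, since $\Mc^\e\geq \underline{\Mc}$ is uniformly positive, so standard compactness/continuous-dependence arguments apply: passing to the limit in the integrated form \eqref{eq:McEqB} on $(s,1-\delta)$ and then letting $\delta\searrow 0$ delivers $\Mc$ satisfying \eqref{eq:McEq} with $\Mc(1)=0$ (the boundary value is forced by $\Mc(s)\leq \Mc^\e(s)$ together with $\Mc^\e(1)=\e$). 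Below $s=\underline{s}$ the map $\Mc$ is extended to all of $[s_{-\infty},1]$ by continuing the orbit backward in $\t$: by Lemma~\ref{lemma:orbit_exists}(ii) it must either enter $\calR$ through $\{m=0,\,s\geq s_{-\infty}\}$ at finite $\t$ or asymptote to $(0,s_{-\infty})$, and Definition~\ref{def:Mc} then sets $\Mc\equiv 0$ on any $s$ not visited, producing a continuous extension.

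For the $\z$-map, the identity $\z'(s)=v/[\g f(s)\Mc(s)]>0$ on $\{\Mc>0\}$ is immediate from the definition and \ref{prop:f}, and the claim $\lim_{s\nearrow 1}\z(s)=0$ amounts to integrability of $1/[f(v)\Mc(v)]$ near $v=1$. This is the key technical point: from the ODE \eqref{eq:McLB} for $\underline{\Mc}$, together with $D(\rho)\sim c\rho^a$ from \ref{prop:D} and $F(s)\sim (1-s)/f(1)$ from \ref{prop:f}, one extracts $\underline{\Mc}(s)\asymp \sqrt{1-s}$ as $s\nearrow 1$; since $\Mc\geq \underline{\Mc}$, the integrand is dominated by $C\,(1-v)^{-1/2}$, which is integrable. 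The verification that $\xi\mapsto (M,S)=(\Mc(s),s)$ with $\xi=\z(s)$ solves \eqref{eq:DS} is then a direct chain-rule computation: $dS/d\xi=1/\z'(S)=\g f(S)M/v$ recovers \eqref{eq:DSs}, and $dM/d\xi=(d\Mc/ds)(dS/d\xi)$ simplifies via \eqref{eq:McEqA} and $\ell(s)=\g^{-1}[(1-s)-\l F(s)]$ to \eqref{eq:DSm}.

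The main obstacle is the passage to the $\e\to 0$ limit, where one must simultaneously control the ordering of the $\Mc^\e$, their uniform separation from $0$ on compact sub-intervals (via the $\underline{\Mc}$-sandwich of Lemma~\ref{lemma:LowerBound}), and the correct boundary behaviour at $s=1$ so that both \eqref{eq:McEq} and the integrability needed for $\z$ survive the limit. Everything else reduces to standard phase-plane and ODE bookkeeping, so once the limit construction is in hand the remaining verifications are essentially formal.
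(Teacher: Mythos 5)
Your proposal is correct and follows essentially the same route as the paper: monotone limit of $\Mc^\e$ via orbit ordering, the $\underline{\Mc}$-sandwich from Lemma~\ref{lemma:LowerBound} to keep the limit away from $0$, passage to the limit in the integrated ODE \eqref{eq:McEqB}, and then the $\z$-map. The only minor difference is in the step $\lim_{s\nearrow 1}\z(s)=0$: the paper uses the exact change-of-variables identity $\int_s^1\frac{\dd\vr}{f(\vr)\,\underline{\Mc}(\vr)}=\tfrac{\g}{v^2}\int_0^{\underline{\Mc}(s)}\frac{D(m)}{m^a}\dd m$ from \eqref{eq:McLB}, whereas you instead extract the asymptotic $\underline{\Mc}(s)\asymp\sqrt{1-s}$ to get integrability of the $\z$-integrand near $s=1$ --- both are correct and rest on the same lower bound.
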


\begin{figure}[h!]
\centering
\begin{tikzpicture}
[xscale=10,yscale=4.3,domain=0:1,samples=100]
\draw[ultra thick,->] (0,0)--(1.1,0) node[scale=1.2,above] {$m$};
\draw[ultra thick,->] (0,0)--(0,1.1) node[scale=1.2,left] {$s$};
\node[scale=.8, below left] at (0, 0) {$0$};

\draw[thick, dashed] (1,1)--(0,1) node[scale=.8,left] {$1$};
\draw[thick, dashed] (1,1)--(1,0) node[scale=.8,below] {$1$};

\draw[thick,dashdotted,red] (0.85525,0.01)--(0.96467,0.03)--(1,0.05)--(1.0134,0.07)--(1.0161,0.09)--(1.0124,0.11)--(1.0044,0.13)--(0.99346,0.15)--(0.98023,0.17)--(0.96525,0.19)--(0.94886,0.21)--(0.93132,0.23)--(0.91283,0.25)--(0.89352,0.27)--(0.87353,0.29)--(0.85293,0.31)--(0.83181,0.33)--(0.81022,0.35)--(0.78822,0.37)--(0.76586,0.39)--(0.74316,0.41)--(0.72016,0.43)--(0.69689,0.45)--(0.67337,0.47)--(0.64962,0.49)--(0.62566,0.51)--(0.60151,0.53)--(0.57717,0.55)--(0.55267,0.57)--(0.52802,0.59)--(0.50322,0.61)--(0.47828,0.63)--(0.45322,0.65)--(0.42804,0.67)--(0.40275,0.69)--(0.37735,0.71)--(0.35185,0.73)--(0.32625,0.75)--(0.30057,0.77)--(0.2748,0.79)--(0.24895,0.81)--(0.22302,0.83)--(0.19702,0.85)--(0.17095,0.87)--(0.14482,0.89)--(0.11862,0.91)--(0.092356,0.93)--(0.066037,0.95)--(0.039662,0.97)--(0.013234,0.99);

\draw[thick,dashdotted,red] (0.28935,0.0001)--(0.42749,0.0003)--(0.49156,0.0005)--(0.53367,0.0007)--(0.56504,0.0009)--(0.59004,0.0011)--(0.6108,0.0013)--(0.62854,0.0015)--(0.64402,0.0017)--(0.65774,0.0019)--(0.67006,0.0021)--(0.68123,0.0023)--(0.69145,0.0025)--(0.70086,0.0027)--(0.70957,0.0029)--(0.71768,0.0031)--(0.72527,0.0033)--(0.7324,0.0035)--(0.73911,0.0037)--(0.74545,0.0039)--(0.75146,0.0041)--(0.75717,0.0043)--(0.76261,0.0045)--(0.7678,0.0047)--(0.77276,0.0049)--(0.77751,0.0051)--(0.78207,0.0053)--(0.78645,0.0055)--(0.79066,0.0057)--(0.79471,0.0059)--(0.79862,0.0061)--(0.8024,0.0063)--(0.80604,0.0065)--(0.80957,0.0067)--(0.81299,0.0069)--(0.8163,0.0071)--(0.81951,0.0073)--(0.82263,0.0075)--(0.82565,0.0077)--(0.82859,0.0079)--(0.83145,0.0081)--(0.83424,0.0083)--(0.83695,0.0085)--(0.83959,0.0087)--(0.84216,0.0089)--(0.84467,0.0091)--(0.84712,0.0093)--(0.84951,0.0095)--(0.85185,0.0097)--(0.85413,0.0099);

\node[scale=.9, above,red] at (0.37, 0.52) {$m
=\ell(s)$};
\node[scale=.9, right] at (0.0378,0.3981) {$0<\e_1<\e_2$};

\node[scale=.8,above] at (0.15,1) {$(\e_1,1)$};
\node[scale=.8,above] at (0.25,1) {$(\e_2,1)$};

\draw[ultra thick,magenta,dashdotted]
(0,1)--(0.1,0.99)--(0.14142,0.98)--(0.17321,0.97)--(0.2,0.96)--(0.22361,0.95)--(0.24495,0.94)--(0.26458,0.93)--(0.28284,0.92)--(0.3,0.91)--(0.31623,0.9)--(0.33166,0.89)--(0.34641,0.88)--(0.36056,0.87)--(0.37417,0.86)--(0.3873,0.85)--(0.4,0.84)--(0.41231,0.83)--(0.42426,0.82)--(0.43589,0.81)--(0.44721,0.8)--(0.45826,0.79)--(0.46904,0.78)--(0.47958,0.77)--(0.4899,0.76)--(0.5,0.75)--(0.5099,0.74)--(0.51962,0.73)--(0.52915,0.72)--(0.53852,0.71)--(0.54772,0.7);
\node[below,scale=0.8] at (0.36056,0.87) {$\underline{\Mc}$};

\draw[dashed] (1,0.7)--(0,0.7) node[scale=0.8,left] {$\underline{s}$};

\draw[ultra thick,blue,->]
(0,1)--(0.01,0.99994)--(0.02,0.99976)--(0.03,0.99945)--(0.04,0.99902)--(0.05,0.99847)--(0.06,0.9978)--(0.07,0.997)--(0.08,0.99608)--(0.09,0.99503)--(0.1,0.99386)--(0.11,0.99256)--(0.12,0.99113)--(0.13,0.98958)--(0.14,0.98789)--(0.15,0.98607)--(0.16,0.98412)--(0.17,0.98204)--(0.18,0.97982)--(0.19,0.97746)--(0.2,0.97496)--(0.21,0.97232)--(0.22,0.96953)--(0.23,0.9666)--(0.24,0.96351)--(0.25,0.96027)--(0.26,0.95688)--(0.27,0.95333)--(0.28,0.94961)--(0.29,0.94572)--(0.3,0.94167)--(0.31,0.93743)--(0.32,0.93301)--(0.33,0.92841)--(0.34,0.92361)--(0.35,0.91861)--(0.36,0.9134)--(0.37,0.90797)--(0.38,0.90232)--(0.39,0.89644)--(0.4,0.89031)--(0.41,0.88393);

\draw[ultra thick,blue,->] (0.41,0.88393)--(0.42,0.87727)--(0.43,0.87033)--(0.44,0.86309)--(0.45,0.85553)--(0.46,0.84763)--(0.47,0.83937)--(0.48,0.83072)--(0.49,0.82164)--(0.5,0.81211)--(0.51,0.80207)--(0.52,0.79148)--(0.53,0.78027)--(0.54,0.76837)--(0.55,0.75567)--(0.56,0.74206)--(0.57,0.72737)--(0.58,0.71137)--(0.59,0.69374)--(0.6,0.67399)--(0.61,0.65129)--(0.62,0.62402)--(0.63,0.58804)--(0.64,0.5)--(0.64,0.5)--(0.63,0.41196)--(0.62,0.37598)--(0.61,0.34871)--(0.6,0.32601)--(0.59,0.30626)--(0.58,0.28863)--(0.57,0.27263)--(0.56,0.25794)--(0.55,0.24433)--(0.54,0.23163)--(0.53,0.21973)--(0.52,0.20852)--(0.51,0.19793)--(0.5,0.18789) node[below left,scale=.8] {$\Mc$};

\draw[fill,cyan] (0,1) ellipse (.2pt and .3pt);
\draw[ultra thick] (0,1) ellipse (.2pt and .3pt);

\draw[ultra thick,dotted,teal,->] 
(0.17,1)--(0.18,0.99978)--(0.19,0.99742)--(0.2,0.99492)--(0.21,0.99229)--(0.22,0.9895)--(0.23,0.98658)--(0.24,0.9835)--(0.25,0.98028)--(0.26,0.9769)--(0.27,0.97336)--(0.28,0.96967)--(0.29,0.96581)--(0.3,0.96178)--(0.31,0.95759)--(0.32,0.95321)--(0.33,0.94866)--(0.34,0.94392)--(0.35,0.93899)--(0.36,0.93386)--(0.37,0.92853)--(0.38,0.92298)--(0.39,0.91722)--(0.4,0.91123)--(0.41,0.905)--(0.42,0.89852)--(0.43,0.89178)--(0.44,0.88478)--(0.45,0.87748)--(0.46,0.86989)--(0.47,0.86197)--(0.48,0.85372)--(0.49,0.84511)--(0.5,0.8361);

\draw[ultra thick,dotted,teal,->] (0.5,0.8361)--(0.51,0.82669)--(0.52,0.81682)--(0.53,0.80647)--(0.54,0.79558)--(0.55,0.78409)--(0.56,0.77195)--(0.57,0.75907)--(0.58,0.74534)--(0.59,0.73063)--(0.6,0.71476)--(0.61,0.69749)--(0.62,0.67848)--(0.63,0.65719)--(0.64,0.63274)--(0.65,0.60338)--(0.66,0.56467)--(0.67,0.47)--(0.67,0.47)--(0.66,0.37533)--(0.65,0.33662)--(0.64,0.30726)--(0.63,0.28281)--(0.62,0.26152)--(0.61,0.24251)--(0.6,0.22524)--(0.59,0.20937)--(0.58,0.19466)--(0.57,0.18093)--(0.56,0.16805)--(0.55,0.15591) node[below,scale=.8] {$\Mc^{\e_1}$};

\draw[fill,cyan] (.17,1) ellipse (.2pt and .3pt);
\draw[ultra thick] (.17,1) ellipse (.2pt and .3pt);

\draw[ultra thick,dashed,orange,->] (0.25,1.000)--(0.26,0.99934)--(0.27,0.99603)--(0.28,0.99258)--(0.29,0.98898)--(0.3,0.98523)--(0.31,0.98132)--(0.32,0.97726)--(0.33,0.97303)--(0.34,0.96865)--(0.35,0.96409)--(0.36,0.95937)--(0.37,0.95446)--(0.38,0.94938)--(0.39,0.9441)--(0.4,0.93864)--(0.41,0.93298)--(0.42,0.92711)--(0.43,0.92102)--(0.44,0.91472)--(0.45,0.90819)--(0.46,0.90142)--(0.47,0.8944)--(0.48,0.88712)--(0.49,0.87957)--(0.5,0.87174)--(0.51,0.8636)--(0.52,0.85514)--(0.53,0.84635)--(0.54,0.8372)--(0.55,0.82767)--(0.56,0.81773)--(0.57,0.80735)--(0.58,0.79649)--(0.59,0.78511)--(0.6,0.77316);

\draw[ultra thick,dashed,orange,->] (0.6,0.77316)--(0.61,0.76057)--(0.62,0.74729)--(0.63,0.73321)--(0.64,0.71823)--(0.65,0.7022)--(0.66,0.68493)--(0.67,0.66615)--(0.68,0.64551)--(0.69,0.62242)--(0.7,0.59591)--(0.71,0.56413)--(0.72,0.52227)--(0.73,0.42)--(0.72,0.31773)--(0.71,0.27587)--(0.7,0.24409)--(0.69,0.21758)--(0.68,0.19449)--(0.67,0.17385)--(0.66,0.15507)--(0.65,0.1378)--(0.64,0.12177)--(0.63,0.10679) node[below,scale=.8] {$\Mc^{\e_2}$};

\draw[fill,cyan] (.25,1) ellipse (.2pt and .3pt);
\draw[ultra thick] (.25,1) ellipse (.2pt and .3pt);

\end{tikzpicture}
\caption{The $\Mc$-mapping, introduced in \Cref{theo:01} with $\Mc(1)=0$, and two $\Mc^{\e}$-mappings, introduced in \Cref{lemma:LowerBound} with $\Mc^{\e}(1)=\e$ for $\e=\e_1>0$ and $\e=\e_2>\e_1$. The function $\underline{\Mc}$ defined in \Cref{lemma:LowerBound}, providing a lower bound, is also shown.}\label{fig:Meps}
\end{figure}

\begin{proof}
For a fixed $v>0$ and $s\in (s_{-\infty},1]$,  
\begin{align}
&\Mc_{\e_1}(s)\leq \Mc_{\e_2}(s)<1 &\text{ if } 0<\e_1<\e_2<1,\nonumber\\
&\text{and the equality holds only if }& \Mc_{\e_{1}}(s)= \Mc_{\e_{2}}(s)=0.\label{eq:MepsOrder}
\end{align}
This is evident since orbits, corresponding to initial values $(\e_1,1)$ and $(\e_2,1)$, do not intersect in the interior of $\calR$ due to uniqueness of solutions, see \Cref{lemma:orbit_exists}. The lemma further yields that $\Mc^{\e}<1$ if $\e<1$. Since, $\Mc^\e(s)$ is bounded below by  $\underline{\Mc}(s)$ for $s\in (\underline{s},1)$, see \Cref{lemma:LowerBound}, there exists $\Mc(s)$ such that
\begin{align}
 \Mc(s):=\lim\limits_{\e\searrow 0} \Mc^\e(s) \;\; \text{ for } s\in (s_{-\infty},1),\;\; \text{ and }\;\;  \Mc(s)\ge \underline{\Mc}(s)>0 \text{ for } s\in (\underline{s},1),
\end{align}
see \Cref{fig:Meps}. Let us take $s\in (\underline{s},1)$. Observe that by \eqref{eq:McEqB}, $\Mc^\e$ satisfies in this interval
\begin{align}
\Phi(\Mc^\e(s))= \Phi(\e)+ \frac{v^2}{\g}\int_s^1  (\Mc^\e(\vr)-\ell(\vr))\,\frac{\dd \vr}{f(\vr)}.
\end{align}
Due to \eqref{eq:MepsOrder}, if $\e_0\in (0,1)$, then $\Mc^\e$ is uniformly bounded away from 1 in $[s_{-\infty},1]$ for all $\e\in (0,\e_0]$. Thus, $\Phi$ can be assumed to be locally Lipschitz in the above equation. Hence, passing the limit $\e\to 0$, we get for all $s\in (\underline{s},1)$,
\begin{align}
\Phi(\Mc(s))= \frac{v^2}{\g}\int_s^1 (\Mc(\vr)-\ell(\vr))\,\frac{\dd \vr}{f(\vr)},
 \end{align} 
 which upon differentiation proves that $\Mc$ satisfies \eqref{eq:McEq} in $(\underline{s},1)$ with $(M_0,S_0)=(0,1)$.  For $s<\underline{s}$, the mapping $\Mc$ is simply extended by solving the equation  \eqref{eq:McEq} with $(M_0,S_0)=(\Mc(\underline{s}),\underline{s})\in \calR$. The existence of $\Mc$ in this case  also follows from the existence of orbits, i.e., \Cref{lemma:orbit_exists}. 
 
The differentiability and monotonicity of the function $\z$ is obvious from \eqref{eq:Defzeta}. To prove that $\lim_{s\nearrow 1} \z(s)=0$,  we estimate for any $s\in (\underline{s},1)$,
\begin{align}
0< \int_s^1 \frac{\dd \vr}{f(\vr)\,\Mc(\vr)}\overset{\eqref{eq:MepsOrder}}\leq  \int_s^1 \frac{\dd \vr}{f(\vr)\,\underline{\Mc}(\vr)}\overset{\eqref{eq:McLB}}= \frac{\gamma}{v^2}\int^{\underline{\Mc}(s)}_0 \frac{D(m)}{m^a}\dd m<\infty.
 \end{align} 
Hence, $\z(s)>-\infty$ for all $s\in (\underline{s},1]$, and passing to the limit $s\to 1$ one obtains that $\z(1^-)=0$. 

Differentiating \eqref{eq:Defzeta} and using \eqref{eq:McEqA}, it immediately follows that the mapping $\xi\mapsto (M,S)$ solves \eqref{eq:DS}.
\end{proof}

\begin{corollary}[Behaviour of the orbit connecting to $(0,1)$]\label{cor:orbit}
Let $(M,S)$ be the orbit defined in \Cref{theo:01}. Then there exists $\underline{\xi}\in \R^-\cup \{-\infty\}$ such that $(M,S)\in \calR=[0,1)\times [s_{-\infty},1]$  for all $\xi\geq \underline{\xi}$. In $\calR\cap \{s\geq s_*\}$, both $M$ and $S$ increase with $\xi$ until $M=\ell(S)$ is satisfied for some $S>s^*$ $(s_*,\;s^*\in (s_{-\infty},1)$ defined in \eqref{eq:SupastSdwnast}$)$, after which $M$ decreases and $S$ remains increasing. Finally, $(M,S)=(0,1)$ for all $\xi\geq 0$. 
\end{corollary}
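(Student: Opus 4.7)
My plan is to work with the parametric description of the orbit provided by \Cref{theo:01}: I write $\xi = \z(s)$, $M(\xi) = \Mc(s)$, $S(\xi) = s$ for $s \in (s_{-\infty}, 1]$, and define $\underline{\xi} := \lim_{s \searrow s_{-\infty}} \z(s) \in \R^- \cup \{-\infty\}$. This limit is well defined because $\z$ is monotone increasing and takes values in $(-\infty, 0]$. Since $\Mc \in [0, 1)$ by \Cref{theo:01}, the orbit automatically lies in $\calR$ for all $\xi \in [\underline{\xi}, 0]$. The tail $\xi \geq 0$ follows from $\z(1^-) = 0$ and $\Mc(1) = 0$: the orbit reaches the equilibrium $(0, 1)$ (\Cref{lemma:orbit_exists}) exactly at $\xi = 0$, and uniqueness then forces it to remain there.

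For the monotonicity on $\calR \cap \{s \geq s_*\}$ I would analyse $\dz M$ and $\dz S$ separately. From \eqref{eq:DSs}, $\dz S > 0$ whenever $M > 0$, so $S$ increases throughout the non-equilibrium portion of the orbit. Rewriting \eqref{eq:DSm} via \eqref{eq:DefEll} in the form $\dz M = v D(M)^{-1}[\ell(S) - M]$ reduces the sign of $\dz M$ to the position of the orbit relative to the nullcline $\{M = \ell(S)\}$. On $[s_*, s^*]$, the definition \eqref{eq:SupastSdwnast} together with \eqref{eq:propEll} gives $\ell(S) \geq 1$, while the remark following \Cref{lemma:orbit_exists} ensures $M < 1$; hence $\dz M > 0$ and both coordinates strictly grow across this band.

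The crucial step, and the one I expect to be the main obstacle, is proving that a \emph{unique}, transverse crossing of the nullcline occurs on $(s^*, 1)$ with $\Mc > \ell$ strictly afterwards. Existence of some crossing $S_0 \in (s^*, 1)$ with $\Mc(S_0) = \ell(S_0)$ is immediate by continuity: $\Mc(s^*) < 1 = \ell(s^*)$, whereas \Cref{lemma:LowerBound} gives $\Mc > \underline{\Mc} > \ell$ in a left neighbourhood of $1$. To exclude tangencies and additional crossings I would use that at any such $S_0 > s^* > s_{\mathrm{M}}$, \eqref{eq:McEqA} forces $\Mc'(S_0) = 0$, while \eqref{eq:propElla} gives $\ell'(S_0) < 0$; consequently $(\Mc - \ell)'(S_0) = -\ell'(S_0) > 0$. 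This sign is incompatible with $\Mc - \ell$ touching zero from positive values, so every crossing must be a transverse sign change from negative to positive, which rules out two crossings on $(s^*, 1)$.

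Assembling these pieces gives the three regimes claimed by the corollary: on $[s_*, S_0]$ both $M$ and $S$ strictly increase; at $s = S_0$ one has $\dz M = 0$ with $M = \ell(S) < 1$; on $(S_0, 1)$ one has $\Mc > \ell$, hence $\dz M < 0$ while $\dz S > 0$; finally $(M, S) = (0, 1)$ for $\xi \geq 0$ by the equilibrium argument of the first paragraph.
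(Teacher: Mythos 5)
Your proof is correct and considerably more detailed than the paper's own argument, which disposes of the corollary in a single sentence: ``The statement is evident from the direction of orbits in the phase--plane, \Cref{fig:OrbitArrow}, \Cref{theo:01} and \Cref{lemma:orbit_exists}.'' You fill in exactly the gap that the figure-based appeal leaves open, namely \emph{why} the orbit crosses the nullcline $\{m=\ell(s)\}$ exactly once on $(s^*,1)$ rather than repeatedly. Your transversality computation — that at any touching point $S_0>s^*>s_{\mathrm{M}}$ one has $\Mc'(S_0)=0$ by \eqref{eq:McEqA} while $\ell'(S_0)<0$ by \eqref{eq:propElla}, so $(\Mc-\ell)'(S_0)=-\ell'(S_0)>0$ — is the right observation: it forces every contact to be a sign change from below to above, and a second such crossing would require $\Mc-\ell$ to reach zero from above with nonpositive derivative, a contradiction. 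Combined with $\Mc(s^*)<1=\ell(s^*)$ (from \Cref{lemma:orbit_exists}) and $\Mc\ge\underline{\Mc}>\ell$ near $s=1$ (from \Cref{lemma:LowerBound} and \Cref{theo:01}), this pins down a unique crossing $S_0\in(s^*,1)$ with the sign pattern the corollary asserts. The remaining pieces — $\dz S>0$ whenever $M>0$, $\ell(s)\ge1>M$ on $[s_*,s^*]$ so $\dz M>0$ there, the definition $\underline{\xi}=\lim_{s\searrow s_{-\infty}}\z(s)$ via monotonicity of $\z$, and $(M,S)\equiv(0,1)$ for $\xi\ge0$ from $\z(1^-)=0$, $\Mc(1)=0$ and the equilibrium structure — all match what the paper implicitly relies on. In short: same underlying picture as the paper, but you actually prove it; this is strictly an improvement in rigor, not a different route.
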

 The statement is evident from the direction of orbits in the phase--plane, \Cref{fig:OrbitArrow}, \Cref{theo:01} and \Cref{lemma:orbit_exists}. Below, we prove that $(M,S)$ is the unique orbit which connects with $(0,1)$ at a finite $\xi$-coordinate.
 
 \subsection{Uniqueness of the orbit connecting with $(0,1)$ at  $\xi=0$}
For a given $v>0$, there are in fact infinitely many orbits $(M,S)$ that connect to $(0,1)$ as $\xi\to +\infty$. However, only one orbit, the one constructed in \Cref{sec:orbit01}, connects to $(0,1)$ at $\xi=0$. This statement will be proved below. This is a common phenomenon for TW solutions of  degenerate diffusion equations \cite{de1998travelling,de1991travelling} and the unique orbit corresponds to the TW with minimum speed in these cases.
 
\begin{proposition}[Uniqueness of the orbit connecting to $(0,1)$ for some $\xi\in \R$] For a fixed $v>0$, let $(\widetilde{M},\widetilde{S})\in (C^1(\R))^2$ be an orbit satisfying \eqref{eq:DS} and connecting with $(0,1)$ from $\{s<1\}$. Let $\widetilde{\Mc}$ and $\widetilde{\z}$ denote the corresponding $\Mc$-mapping (\Cref{def:Mc}) and $\z$-mapping (\Cref{theo:01}) of $(\widetilde{M},\widetilde{S})$ with $\widetilde{\xi}=\widetilde{\z}(1)$. Then, $\widetilde{\xi}<\infty$ if and only if $(\widetilde{M},\widetilde{S})$ is the orbit defined in \Cref{theo:01}.\label{pros:Unique}
\end{proposition}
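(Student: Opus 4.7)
The ``if'' direction is immediate, since the orbit constructed in \Cref{theo:01} satisfies $\widetilde{\z}(1) = \z(1^-) = 0$. For the ``only if'' direction, my plan is to assume $\widetilde{\xi} < \infty$ and deduce $\widetilde{\Mc} \equiv \Mc$. The argument rests on two pillars: an order-preserving comparison that bounds $\widetilde{\Mc}$ from above by $\Mc$, and a local asymptotic analysis at the degenerate equilibrium $(0,1)$ that splits candidate orbits into a ``fast'' and a ``slow'' regime, only the former being compatible with a finite $\widetilde{\xi}$.

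First I would establish the upper bound $\widetilde{\Mc} \le \Mc$. Since $\widetilde{\Mc}(1) = 0 < \e = \Mc^{\e}(1)$ for every $\e > 0$, continuity gives $\widetilde{\Mc} < \Mc^{\e}$ in a left-neighbourhood of $s=1$; by the Picard--Lindel\"of uniqueness for \eqref{eq:GE} at interior points of $\calR$ (as used in \Cref{lemma:orbit_exists}), distinct orbits cannot cross, so this strict inequality propagates to the whole common domain. Passing $\e \searrow 0$ and invoking \Cref{theo:01} yields $\widetilde{\Mc} \le \Mc$ globally. The same uniqueness, applied to $\widetilde{\Mc}$ and $\Mc$ directly at any interior point where $\Mc > 0$, shows that $\widetilde{\Mc} \not\equiv \Mc$ in fact forces $\widetilde{\Mc}(s) < \Mc(s)$ strictly wherever both are defined.

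Next I would analyse \eqref{eq:McEqA} near $s = 1$, using $D(\Mc) \sim C_1 \Mc^a$ (with $C_1 > 0$ from \ref{prop:D}) and $\ell(s) \sim \mu(1-s)$ where $\mu := -\ell'(1) > 0$ by \eqref{eq:propEll}. Substituting the ansatz $\Mc(s) \sim A(1-s)^\alpha$ and matching exponents reveals exactly two admissible regimes compatible with $\Mc(1) = 0$: a \emph{fast} regime $\Mc \sim C(1-s)^{1/(a+1)}$ with coefficient uniquely fixed by $C^{a+1} = v^2(a+1)/(\gamma C_1 f(1))$, and a \emph{slow} regime $\Mc \sim \mu(1-s)$ trailing the nullcline $m = \ell(s)$. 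From the definition \eqref{eq:Defzeta}, the fast regime makes $\int_s^1 \dd\vr/(f(\vr)\,\widetilde{\Mc}(\vr))$ convergent as $s \nearrow 1$ (since $1/(a+1) < 1$), whereas the slow regime yields a logarithmic divergence and hence $\widetilde{\xi} = -\infty$. The lower bound $\Mc \ge \underline{\Mc} \sim (1-s)^{1/2}$ from \Cref{lemma:LowerBound} itself dominates $(1-s)$, so it locates $\Mc$ in the fast regime; likewise the assumption $\widetilde{\xi} < \infty$ forces $\widetilde{\Mc}$ into the fast regime with the same leading coefficient $C$ as $\Mc$.

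The hard part will be this last implication: that the fast regime contains a unique orbit, so that $\widetilde{\Mc}$ and $\Mc$ must coincide. I plan to write $\widetilde{\Mc}(s) = C(1-s)^{1/(a+1)} + w(s)$ with $w = o((1-s)^{1/(a+1)})$, substitute into \eqref{eq:McEqA}, and linearise. In the variable $\eta = (1-s)^{1/(a+1)}$ the leading linear equation reads $w'(\eta) \sim -(a+1)\, w/(C\eta^2)$, whose homogeneous solutions behave like $A \exp((a+1)/(C\eta))$ and blow up as $\eta \searrow 0$. The condition $w = o(\eta)$ therefore forces $A = 0$ and pins down the higher-order correction uniquely, so any fast-regime orbit sharing the leading asymptotic of $\Mc$ coincides with $\Mc$ in a left-neighbourhood of $s = 1$, and then globally by the Picard--Lindel\"of uniqueness away from the degeneracy. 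This contradicts $\widetilde{\Mc} \not\equiv \Mc$, completing the only-if direction.
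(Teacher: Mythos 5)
The starting point of your argument coincides with the paper's: both deduce $\widetilde{\Mc}\le\Mc$ from the ordering of orbits, and both aim to show that $\widetilde{\Mc}\neq\Mc$ would force $\int_s^1 \dd\vr/(f\,\widetilde{\Mc})$ to diverge. After that, however, you take a genuinely different route. The paper never posits any explicit asymptotic form for $\widetilde{\Mc}$ near $s=1$. Instead it proves that $\widetilde{\xi}<\infty$ forces $\frac{\dd}{\dd s}\widetilde{\Mc}(1)=-\infty$, applies L'H\^opital to get $\ell(s)/\widetilde{\Mc}(s)<1-\nu$ near $s=1$, and then runs a functional comparison in the transformed variable $\Phi(\cdot)$: the difference $\eth\phi=\Phi(\Mc)-\Phi(\widetilde{\Mc})$ obeys a differential inequality, an integrating-factor argument yields $\eth\phi(s)\gtrsim\widetilde{\Mc}(s)^{1/\nu}$, and a carefully chosen $\nu\in[(1+a)/(2+a),1)$ so that $\Phi(m)^\nu\lesssim m\,D(m)$ lets one lower-bound $\widetilde{\z}$ by the $\tau$-time to reach the equilibrium, which is infinite. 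No power-law ansatz is ever invoked.

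Your argument instead rests on an informal asymptotic classification that is not established and would be hard to close. Three concrete gaps:

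\emph{(1) The fast/slow dichotomy is not exhaustive.} Substituting $\Mc\sim A(1-s)^\alpha$ and matching only determines which \emph{power laws} are consistent; it does not exclude non-power-law behaviour such as $\widetilde{\Mc}\sim(1-s)\log^{\beta}(1/(1-s))$ or oscillatory corrections, which are precisely the delicate cases near a degenerate equilibrium where the vector field is not Lipschitz. The paper's L'H\^opital step is designed precisely to bypass this: it obtains $\ell/\widetilde{\Mc}\to 0$ without assuming anything about the shape of $\widetilde{\Mc}$.

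\emph{(2) The lower bound does not place $\Mc$ in the fast regime.} From \Cref{lemma:LowerBound}, $\underline{\Mc}\sim(1-s)^{1/2}$, so $\Mc$ dominates $(1-s)^{1/2}$, which rules out the slow regime but does not by itself give the exponent $1/(a+1)$. Concluding ``so it locates $\Mc$ in the fast regime'' relies again on the unproven dichotomy.

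\emph{(3) The linearization is both only a sketch and miscomputed.} Setting $\eta=(1-s)^{1/(a+1)}$, $\Mc=C\eta+w$, $D(\Mc)\sim C_1\Mc^a$, $\ell(s)\sim\mu\eta^{a+1}$ and using $C^{a+1}=v^2(a+1)/(\g f(1)C_1)$, the leading-order equation for $w$ is
\begin{equation*}
\frac{\dd w}{\dd\eta}\sim -\frac{a\,w}{\eta}-\mu\eta^a,
\end{equation*}
with homogeneous solutions $\sim A_0\,\eta^{-a}$ (algebraic blow-up), not $A\exp((a+1)/(C\eta))$ as you wrote. The qualitative conclusion (nonzero $A_0$ contradicts $w=o(\eta)$) survives, but the remainder estimates needed to promote a formal linearization to a genuine uniqueness statement near a degenerate fixed point are exactly what you label ``the hard part'' and leave open; that is the crux of the proposition, and it is not supplied.

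In short, your approach is a plausible asymptotic heuristic but not a proof; it requires a rigorous classification of all orbits near $(0,1)$ and a controlled linearization around the degenerate equilibrium, neither of which is given. The paper's $\Phi$-based comparison is precisely engineered to avoid having to resolve the local asymptotics at all.
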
 
 
\begin{proof}
 We first show that 
\begin{align}\label{eq:dMdSinft}
 \widetilde{\xi}=\widetilde{\z}(1)<\infty\quad \text{ implies }\quad \frac{\dd}{\dd s}\widetilde{\Mc}(1)=-\infty.
 \end{align}
  Assume the contrary, i.e, $\frac{\dd}{\dd s}\widetilde{\Mc}(1)>-\infty$. Then, there exists $s\in (s_{-\infty},1)$ and $L>0$ such that 
$$
\widetilde{\Mc}(\vr)\leq L\,(1-\vr) \text{ for all } \vr\in [s,1].
$$
This implies $\widetilde{\xi}=\widetilde{\z}(1)=\infty$ since, writing formally,
$$
\widetilde{\z}(1)-\widetilde{\z}(s)=\frac{v}{\g} \int^1_s \frac{\dd \vr}{f(\vr)\,\widetilde{\Mc}(\vr)}\geq  \frac{v}{\g} \int^1_s \frac{\dd \vr}{L\,f(1)\,(1-\vr)}= \infty.
$$
Now, let us assume $\frac{\dd}{\dd s} \widetilde{\Mc}(1)=-\infty$.
Since $\widetilde{\Mc}$ is continuous and differentiable in a left neighbourhood of $s=1$, and $\ell'(1)<0$ is bounded, for any given $\nu\in (0,1)$, by the L'H\^{o}pital's rule, there exists $\widetilde{s}_\nu\in (0,1)$ such that 
\begin{align}\label{eq:elltildeM}
 \frac{\ell(s)}{\widetilde{\Mc}(s)} <1-\nu   \text{ for all } s\in [\widetilde{s}_\nu,1].
 \end{align}
 Let $(M,S)$ be the orbit defined  in \Cref{theo:01} with $\Mc$ and $\z$ being the corresponding mappings. To shorten notations we introduce
\begin{align}\label{eq:shorthandPhi}
 \phi=\Phi(\Mc),\quad \widetilde{\phi}=\Phi(\widetilde{\Mc}),\; \text{ and }\; \eth \phi=\phi-\widetilde{\phi}.  
 \end{align}
  Since $\Mc(s)=\lim_{\e\searrow 0}\Mc^\e(s)$, see \Cref{theo:01}, the ordering of orbits for a fixed $v>0$ implies that 
\begin{align}\label{eq:MtildeMorder}
  \Mc(s)\geq \widetilde{\Mc}(s),\quad \phi\geq \widetilde{\phi},\quad \eth\phi\geq 0,\; \text{ for all } s\in (s_{-\infty},1).  
  \end{align}
From \eqref{eq:McEqB} applied to $\Mc$ and $\widetilde{\Mc}$, we then obtain
\begin{align}\label{eq:MtildeMsubtract}
\frac{\dd (\eth\phi)}{\dd s}= -\frac{v^2}{\g f(s)}[\Phi^{-1}(\phi)-\Phi^{-1}(\widetilde{\phi})].
\end{align}
Since $\Phi$ is convex and strictly increasing, $\Phi^{-1}$ exists and is concave, and $\{\Phi^{-1}\}'(\Phi(m))=1\slash  \Phi'(m)=1\slash (m\,D(m))$. Hence, \eqref{eq:MtildeMorder} implies $\Phi^{-1}(\phi)-\Phi^{-1}(\widetilde{\phi})\leq \{\Phi^{-1}\}'(\widetilde{\phi})\, \eth\phi={\eth\phi}\slash{(\widetilde{\Mc}\,D(\widetilde{\Mc}))}$. Thus, \eqref{eq:MtildeMsubtract} yields
\begin{align}
\frac{\dd (\eth\phi)}{\dd s} + \frac{v^2}{\g f(s)} \frac{\eth\phi}{\widetilde{\Mc}\,D(\widetilde{\Mc})}\geq 0.
\end{align}
Using the integrating factor $\exp\left (\frac{v^2}{\g} \int_{\widetilde{s}_\nu}^s \frac{\dd \vr}{f\,\widetilde{\Mc}\,D(\widetilde{\Mc})}\right )$ and integrating in $ (\widetilde{s}_\nu,s)$ one has
\begin{align}\label{eq:DeltaPhiRel}
\eth \phi(s) \geq \eth\phi(\widetilde{s}_\nu)\, \exp\left (-\frac{v^2}{\g} \int_{\widetilde{s}_\nu}^s  \frac{\dd \vr}{f(\vr)\,\widetilde{\Mc}(\vr)\,D(\widetilde{\Mc}(\vr))}\right ).
\end{align}
Observe from \eqref{eq:elltildeM} and \eqref{eq:McEqA} that $\frac{\dd \widetilde{\Mc}}{\dd s}<0$ for $s\in (\widetilde{s}_\nu,1)$, which gives
\begin{align}
-\frac{v^2}{\g} \frac{1}{f\,\widetilde{\Mc}\,D(\widetilde{\Mc})}\overset{\eqref{eq:McEqA}}= \frac{1}{\widetilde{\Mc}(s)-\ell(s)}\frac{\dd \widetilde{\Mc}}{\dd s}=  \tfrac{1}{\left (1-\frac{\ell(s)}{\widetilde{\Mc}(s)}\right )}\frac{1}{\widetilde{\Mc}}\frac{\dd \widetilde{\Mc}}{\dd s}
 \overset{\eqref{eq:elltildeM}}\geq  \frac{1}{\nu} \frac{1}{\widetilde{\Mc}}\frac{\dd \widetilde{\Mc}}{\dd s}.
\end{align}
Putting this in \eqref{eq:DeltaPhiRel}, we have
$$
\eth \phi(s) \geq \eth\phi(\widetilde{s}_\nu)\, \exp\left ( \frac{1}{\nu}\int_{\widetilde{s}_\nu}^s \frac{\dd \widetilde{\Mc}}{\widetilde{\Mc}}\right )= \eth\phi(\widetilde{s}_\nu) \left (\frac{\widetilde{\Mc}(s)}{\widetilde{\Mc}(\widetilde{s}_\nu)}\right )^{\frac{1}{\nu}}.
$$
Rearranging the above relation using \eqref{eq:shorthandPhi} one obtains for a constant $\widetilde{C}^1_\nu>0$ only dependent on $\widetilde{s}_\nu<1$ that
\begin{align}\label{eq:TildePhiIneq}
\Phi(\widetilde{\Mc}) + (\widetilde{C}^1_\nu \widetilde{\Mc})^{\frac{1}{\nu}}\leq \Phi(\Mc), \quad \text{ or }\quad \widetilde{C}^1_\nu \widetilde{\Mc}\leq (\Phi(\Mc))^{\nu}.
\end{align}
 Recalling \ref{prop:D} and the definition of $\Phi$ in \eqref{eq:DefPhi}, we choose $\nu\in (0,1)$ such that there exists a constant $\widetilde{C}^2_\nu>0$ for which
 \begin{align}\label{eq:PhiGrowthRate}
 \Phi(m)^{\nu}\leq \widetilde{C}^2_\nu \Phi'(m)\overset{\eqref{eq:DefPhi}}= \widetilde{C}^2_\nu\, m\,D(m) \text{ for all } m\in (0,1).
 \end{align}
In a right neighbourhood $[0,\e]$ of $m=0$ ($\epsilon>0$), where $D(m)\sim C m^a$ (see \ref{prop:D}) for some constant $C>0$, we have $\Phi'(m)\sim C m^{a+1}$ and $\Phi(m)\sim \frac{C}{a+2} m^{a+2}$. Hence, for any constant $\nu\in [(1+a)\slash (2+a), 1)$ there exists $\widetilde{C}^2_\nu>0$ satisfying \eqref{eq:PhiGrowthRate} in $[0,\e]$. For $m\in (0,1)$, one can  take $\widetilde{C}^2_\nu$ to be the minimum of the value of $\widetilde{C}^2_\nu$ for $m\in [0,\e]$,  and $\min_{m\in (\e,1)} \Phi(m)^{\nu} (\Phi'(m))^{-1}>0$. Then, for $s\in (\widetilde{s}_\nu,1)$ one has
\begin{align}\label{eq:FinalTildeIneq}
\widetilde{\z}(s)-\widetilde{\z}(\widetilde{s}_\nu)&=  \frac{v}{\g}\int_{\widetilde{s}_\nu}^s \frac{\dd \vr}{f(\vr)\,\widetilde{\Mc}(\vr)}\overset{\eqref{eq:TildePhiIneq}}\geq \frac{v}{\g \widetilde{C}^1_\nu}\int_{\widetilde{s}_\nu}^s\frac{\dd \vr}{f(\vr)\,\Phi(\Mc(\vr))^{\nu}}\nonumber\\
&\overset{\eqref{eq:PhiGrowthRate}}\geq \frac{v}{\g \widetilde{C}^1_\nu \widetilde{C}^2_\nu} \int_{\widetilde{s}_\nu}^s \frac{\dd \vr}{f(\vr)\,\Mc(\vr) D(\Mc(\vr))}\overset{\eqref{eq:GEs}}= \frac{1}{\widetilde{C}^1_\nu \widetilde{C}^2_\nu}\int_{\widetilde{s}_\nu}^s \left (\frac{\dd \t}{\dd S}\right )\dd S.
\end{align}
The orbit $(M,S)\to (0,1)$ only as $\t\to \infty$ since $(0,1)$ is an equilibrium point of \eqref{eq:GE} and the right hand side of \eqref{eq:GE} is locally Lipschitz with respect to $M$ and $S$.
Hence, the right hand side of \eqref{eq:FinalTildeIneq} tends to $+\infty$ as $s\nearrow 1$.
This contradicts $\widetilde{\xi}=\widetilde{\z}(1)$ being bounded, which concludes the proof of the proposition.

\end{proof}

 By this point we have found a unique orbit which satisfies half of the boundary conditions in \eqref{eq:BC}, i.e. $(M,S)(0)=(0,1)$. For the boundary condition at the other end, i.e. $\xi\to -\infty$, we investigate how the orbit varies with $v$.

\subsection{Ordering of the orbits with respect to $v$}
\begin{figure}[h!]
\centering
\begin{tikzpicture}
[xscale=12,yscale=4.3,domain=0:1,samples=100]
\draw[ultra thick,->] (0,0)--(1.1,0) node[scale=1.2,above] {$m$};
\draw[ultra thick,->] (0,0)--(0,1) node[scale=1.2,above right] {$s$};
\node[scale=.8, below left] at (0, 0) {$0$};

\draw[dashed] (1,1)--(0,1) node[scale=.8,left] {$1$};
\draw[dashed] (1,1)--(1,0) node[scale=.8,below] {$1$};

\draw[dotted] (1,0.13)--(0,0.13) node[scale=.9,above left] {$s^*$};
\draw[dotted] (1,0.05)--(0,0.05) node[scale=.9,left] {$s_*$};

\draw[thick,dashdotted,red] (0.85525,0.01)--(0.96467,0.03)--(1,0.05)--(1.0134,0.07)--(1.0161,0.09)--(1.0124,0.11)--(1.0044,0.13)--(0.99346,0.15)--(0.98023,0.17)--(0.96525,0.19)--(0.94886,0.21)--(0.93132,0.23)--(0.91283,0.25)--(0.89352,0.27)--(0.87353,0.29)--(0.85293,0.31)--(0.83181,0.33)--(0.81022,0.35)--(0.78822,0.37)--(0.76586,0.39)--(0.74316,0.41)--(0.72016,0.43)--(0.69689,0.45)--(0.67337,0.47)--(0.64962,0.49)--(0.62566,0.51)--(0.60151,0.53)--(0.57717,0.55)--(0.55267,0.57)--(0.52802,0.59)--(0.50322,0.61)--(0.47828,0.63)--(0.45322,0.65)--(0.42804,0.67)--(0.40275,0.69)--(0.37735,0.71)--(0.35185,0.73)--(0.32625,0.75)--(0.30057,0.77)--(0.2748,0.79)--(0.24895,0.81)--(0.22302,0.83)--(0.19702,0.85)--(0.17095,0.87)--(0.14482,0.89)--(0.11862,0.91)--(0.092356,0.93)--(0.066037,0.95)--(0.039662,0.97)--(0.013234,0.99);

\draw[thick,dashdotted,red] (0.28935,0.0001)--(0.42749,0.0003)--(0.49156,0.0005)--(0.53367,0.0007)--(0.56504,0.0009)--(0.59004,0.0011)--(0.6108,0.0013)--(0.62854,0.0015)--(0.64402,0.0017)--(0.65774,0.0019)--(0.67006,0.0021)--(0.68123,0.0023)--(0.69145,0.0025)--(0.70086,0.0027)--(0.70957,0.0029)--(0.71768,0.0031)--(0.72527,0.0033)--(0.7324,0.0035)--(0.73911,0.0037)--(0.74545,0.0039)--(0.75146,0.0041)--(0.75717,0.0043)--(0.76261,0.0045)--(0.7678,0.0047)--(0.77276,0.0049)--(0.77751,0.0051)--(0.78207,0.0053)--(0.78645,0.0055)--(0.79066,0.0057)--(0.79471,0.0059)--(0.79862,0.0061)--(0.8024,0.0063)--(0.80604,0.0065)--(0.80957,0.0067)--(0.81299,0.0069)--(0.8163,0.0071)--(0.81951,0.0073)--(0.82263,0.0075)--(0.82565,0.0077)--(0.82859,0.0079)--(0.83145,0.0081)--(0.83424,0.0083)--(0.83695,0.0085)--(0.83959,0.0087)--(0.84216,0.0089)--(0.84467,0.0091)--(0.84712,0.0093)--(0.84951,0.0095)--(0.85185,0.0097)--(0.85413,0.0099);

\node[scale=.9, above,red] at (0.3, 0.55) {$\; m
=\ell(s)$};
\node[scale=.9, right] at (0.0378,0.3981) {$v_1<v_2<v_3<v_4$};

\draw[ultra thick, blue,->] (0,0.999)--(0.065441,0.999)--(0.15499,0.99899)--(0.15692,0.99899)--(0.15888,0.99899)--(0.16086,0.99899)--(0.16286,0.99899)--(0.16489,0.99899)--(0.16695,0.99899)--(0.16903,0.99899)--(0.17114,0.99899)--(0.17327,0.99899)--(0.17544,0.99899)--(0.17762,0.99899)--(0.17984,0.99899)--(0.18208,0.99899)--(0.18436,0.99899)--(0.18666,0.99899)--(0.18899,0.99898)--(0.19135,0.99898)--(0.19373,0.99898)--(0.19615,0.99898)--(0.1986,0.99898)--(0.20108,0.99898)--(0.20359,0.99898)--(0.20614,0.99897)--(0.20871,0.99897)--(0.21132,0.99897)--(0.21396,0.99897)--(0.21663,0.99897)--(0.21934,0.99896)--(0.22208,0.99896)--(0.22485,0.99896)--(0.22766,0.99895)--(0.2305,0.99895)--(0.23339,0.99895)--(0.2363,0.99894)--(0.23926,0.99894)--(0.24225,0.99893)--(0.24527,0.99893)--(0.24834,0.99892)--(0.25144,0.99892)--(0.25459,0.99891)--(0.25777,0.9989)--(0.26099,0.99889)--(0.26426,0.99889)--(0.26756,0.99888)--(0.2709,0.99887)--(0.27429,0.99886)--(0.27772,0.99885)--(0.28119,0.99883)--(0.28471,0.99882)--(0.28827,0.9988)--(0.29187,0.99879)--(0.29552,0.99877)--(0.29922,0.99875)--(0.30296,0.99873)--(0.30675,0.99871)--(0.31058,0.99869)--(0.31447,0.99866)--(0.3184,0.99863)--(0.32238,0.9986)--(0.32641,0.99856)--(0.33049,0.99853)--(0.33462,0.99849)--(0.3388,0.99844)--(0.34303,0.99839)--(0.34732,0.99834)--(0.35166,0.99828)--(0.35605,0.99822)--(0.36049,0.99815)--(0.36499,0.99808)--(0.36955,0.99799)--(0.37416,0.9979)--(0.37883,0.9978)--(0.38355,0.9977)--(0.38834,0.99758)--(0.39318,0.99744)--(0.39807,0.9973)--(0.40303,0.99714)--(0.40805,0.99697)--(0.41313,0.99678)--(0.41826,0.99657)--(0.42346,0.99633)--(0.42872,0.99608)--(0.43404,0.99579)--(0.43942,0.99548)--(0.44486,0.99514)--(0.45037,0.99475)--(0.45594,0.99433)--(0.46156,0.99386)--(0.46726,0.99334)--(0.47301,0.99276)--(0.47882,0.99212)--(0.4847,0.99141)--(0.49063,0.99062)--(0.49663,0.98974)--(0.50268,0.98875)--(0.5088,0.98765)--(0.51496,0.98643)--(0.52119,0.98506)--(0.52746,0.98352)--(0.53378,0.98181)--(0.54016,0.97988)--(0.54657,0.97772)--(0.55303,0.9753)--(0.55951,0.97258)--(0.56603,0.96952)--(0.57258,0.96608)--(0.57914,0.9622)--(0.58571,0.95784)--(0.59228,0.95292)--(0.59883,0.94737)--(0.60537,0.94112)--(0.61187,0.93407)--(0.61831,0.92613)--(0.62469,0.91719)--(0.63098,0.90712)--(0.63715,0.8958)--(0.64319,0.88311)--(0.64906,0.8689)--(0.65474,0.85303)--(0.66019,0.83538)--(0.66538,0.81582)--(0.67026,0.79425)--(0.6748,0.7706)--(0.67896,0.74484);

\draw[ultra thick, blue,->] (0.67896,0.74484)--(0.6827,0.717)--(0.68597,0.68717)--(0.68873,0.65549)--(0.69095,0.62222)--(0.6926,0.58765)--(0.69364,0.55216)--(0.69407,0.51618)--(0.69387,0.48016)--(0.69303,0.44458)--(0.69157,0.40987)--(0.68949,0.37645)--(0.68682,0.34466)--(0.68358,0.31478)--(0.6798,0.28699)--(0.67552,0.26142)--(0.67077,0.23811)--(0.66558,0.21704)--(0.65999,0.19814)--(0.65403,0.18132)--(0.64774,0.16643)--(0.64114,0.15332)--(0.63426,0.14185)--(0.62712,0.13184)--(0.61974,0.12314)--(0.61214,0.11561)--(0.60434,0.10912)--(0.59634,0.10352)--(0.58817,0.098719)--(0.57982,0.094603)--(0.57132,0.091084)--(0.56266,0.088081)--(0.55385,0.085525)--(0.54489,0.083352)--(0.5358,0.081509)--(0.52656,0.079949)--(0.51719,0.078632)--(0.50768,0.077522)--(0.49804,0.076588)--(0.48826,0.075806)--(0.47836,0.075151)--(0.46831,0.074606)--(0.45814,0.074152)--(0.44783,0.073776)--(0.43739,0.073466)--(0.42681,0.07321)--(0.4161,0.073001)--(0.40525,0.072831)--(0.39425,0.072692)--(0.38312,0.07258)--(0.37185,0.07249)--(0.36044,0.072418)--(0.34888,0.072361)--(0.33717,0.072317)--(0.32532,0.072281)--(0.31332,0.072254)--(0.30116,0.072233)--(0.28886,0.072217)--(0.2764,0.072205)--(0.26378,0.072196)--(0.251,0.07219)--(0.23806,0.072185)--(0.22496,0.072182)--(0.21169,0.072179)--(0.19826,0.072178)--(0.18466,0.072177)--(0.17089,0.072176)--(0.15694,0.072176)--(0.14282,0.072175)--(0.12852,0.072175)--(0.11404,0.072175)--(0.099379,0.072175)--(0.084533,0.072175)--(0.0695,0.072175)--(0.054279,0.072175)--(0.038866,0.072175)--(0.023259,0.072175)--(0.0074557,0.072175);

\node[scale=0.8,left] at (0.6827,0.7) {$\Mc_{v_1}$};
 
 \draw[ultra thick, dotted, cyan,->] 
(0.002,0.999)--(0.002,0.999)--(0.20109,0.99899)--(0.20614,0.99899)--(0.21132,0.99899)--(0.21664,0.99899)--(0.22208,0.99899)--(0.22767,0.99899)--(0.2334,0.99899)--(0.23927,0.99898)--(0.24529,0.99898)--(0.25146,0.99898)--(0.25779,0.99898)--(0.26428,0.99897)--(0.27093,0.99897)--(0.27776,0.99896)--(0.28475,0.99895)--(0.29192,0.99895)--(0.29927,0.99894)--(0.30681,0.99893)--(0.31454,0.99891)--(0.32246,0.9989)--(0.33058,0.99888)--(0.33891,0.99886)--(0.34745,0.99884)--(0.3562,0.9988)--(0.36518,0.99877)--(0.37438,0.99873)--(0.38381,0.99867)--(0.39347,0.99861)--(0.40338,0.99853)--(0.41354,0.99844)--(0.42395,0.99833)--(0.43462,0.99819)--(0.44555,0.99803)--(0.45676,0.99782)--(0.46824,0.99757)--(0.48,0.99726)--(0.49205,0.99688)--(0.50438,0.9964)--(0.51701,0.99579)--(0.52993,0.99504)--(0.54315,0.99408)--(0.55667,0.99287)--(0.57048,0.99131)--(0.58457,0.98931)--(0.59894,0.98672)--(0.61356,0.98334)--(0.62842,0.97889)--(0.64346,0.97301)--(0.65865,0.96518)--(0.67389,0.95466)--(0.68907,0.94047)--(0.70404,0.92125)--(0.71858,0.89518)--(0.73239,0.85999)--(0.74509,0.81304)--(0.75617,0.7518)--(0.76506,0.67483)--(0.77114,0.58319);

\draw[ultra thick, dotted, cyan,->] (0.77114,0.58319)--(0.77393,0.48157)--(0.77312,0.37795)--(0.76877,0.28123)--(0.76119,0.19823)--(0.75091,0.13204)--(0.73854,0.082431)--(0.72462,0.047347)--(0.70967,0.024235)--(0.69415,0.01071)--(0.67844,0.0042234)--(0.66275,0.0017086);

\node[scale=0.8,left] at (0.765,0.7) {$\Mc_{v_2}$};

\draw[ultra thick, dashed, teal,->] (0.0002,0.9999)--(0.32261,0.99987)--(0.32585,0.99987)--(0.32913,0.99987)--(0.33243,0.99987)--(0.33577,0.99987)--(0.33914,0.99987)--(0.34255,0.99986)--(0.34599,0.99986)--(0.34947,0.99986)--(0.35298,0.99985)--(0.35652,0.99985)--(0.3601,0.99985)--(0.36372,0.99984)--(0.36737,0.99984)--(0.37106,0.99984)--(0.37479,0.99983)--(0.37855,0.99983)--(0.38236,0.99982)--(0.3862,0.99981)--(0.39008,0.99981)--(0.39399,0.9998)--(0.39795,0.99979)--(0.40195,0.99979)--(0.40598,0.99978)--(0.41006,0.99977)--(0.41418,0.99976)--(0.41834,0.99975)--(0.42254,0.99974)--(0.42678,0.99972)--(0.43107,0.99971)--(0.43539,0.9997)--(0.43976,0.99968)--(0.44418,0.99966)--(0.44864,0.99964)--(0.45314,0.99962)--(0.45769,0.9996)--(0.46228,0.99958)--(0.46692,0.99955)--(0.47161,0.99952)--(0.47634,0.99949)--(0.48112,0.99946)--(0.48595,0.99942)--(0.49083,0.99938)--(0.49575,0.99934)--(0.50072,0.99929)--(0.50574,0.99924)--(0.51082,0.99918)--(0.51594,0.99912)--(0.52111,0.99905)--(0.52633,0.99897)--(0.53161,0.99889)--(0.53693,0.9988)--(0.54231,0.9987)--(0.54774,0.99859)--(0.55323,0.99846)--(0.55876,0.99833)--(0.56435,0.99818)--(0.57,0.99801)--(0.5757,0.99783)--(0.58145,0.99763)--(0.58726,0.9974)--(0.59312,0.99715)--(0.59904,0.99687)--(0.60501,0.99656)--(0.61104,0.99621)--(0.61713,0.99582)--(0.62327,0.99538)--(0.62946,0.99489)--(0.63571,0.99434)--(0.64201,0.99372)--(0.64837,0.99302)--(0.65478,0.99223)--(0.66124,0.99134)--(0.66776,0.99033)--(0.67432,0.98917)--(0.68094,0.98786)--(0.68759,0.98637)--(0.6943,0.98466)--(0.70104,0.9827)--(0.70782,0.98045)--(0.71464,0.97785)--(0.72148,0.97486)--(0.72835,0.9714)--(0.73523,0.96738)--(0.74212,0.9627)--(0.74901,0.95726)--(0.75588,0.95089)--(0.76272,0.94344)--(0.76952,0.93471)--(0.77625,0.92444)--(0.78288,0.91236)--(0.7894,0.89814)--(0.79576,0.8814)--(0.80192,0.86173)--(0.80784,0.83865)--(0.81347,0.81169)--(0.81873,0.78037)--(0.82357,0.74427)--(0.82791,0.70312)--(0.83167,0.65683)--(0.83478,0.60561)--(0.83716,0.55005)--(0.83876,0.49113);

\draw[ultra thick, dashed, teal,->] (0.83876,0.49113)--(0.83952,0.43018)--(0.83944,0.36878)--(0.83851,0.30861)--(0.83676,0.2512)--(0.83424,0.19787)--(0.83102,0.14956)--(0.82718,0.1069)--(0.82281,0.070274)--(0.81804,0.040081)--(0.813,0.017297)--(0.80793,0.0042407);

\node[scale=0.8,left] at (0.835,0.7) {$\Mc_{v_3}$};

\draw[ultra thick, dashdotted, orange,->] (0.002,0.999)--(0.40754,0.99899)--(0.41162,0.99899)--(0.41575,0.99899)--(0.41991,0.99899)--(0.42412,0.99899)--(0.42836,0.99899)--(0.43266,0.99899)--(0.43699,0.99899)--(0.44137,0.99899)--(0.44579,0.99899)--(0.45025,0.99899)--(0.45476,0.99899)--(0.45932,0.99899)--(0.46392,0.99899)--(0.46857,0.99899)--(0.47326,0.99898)--(0.47801,0.99898)--(0.4828,0.99898)--(0.48763,0.99898)--(0.49252,0.99898)--(0.49745,0.99898)--(0.50244,0.99897)--(0.50747,0.99897)--(0.51256,0.99897)--(0.5177,0.99897)--(0.52288,0.99896)--(0.52812,0.99896)--(0.53342,0.99896)--(0.53876,0.99895)--(0.54416,0.99895)--(0.54962,0.99895)--(0.55513,0.99894)--(0.56069,0.99894)--(0.56631,0.99893)--(0.57199,0.99892)--(0.57772,0.99891)--(0.58351,0.99891)--(0.58936,0.9989)--(0.59527,0.99889)--(0.60123,0.99887)--(0.60726,0.99886)--(0.61334,0.99885)--(0.61949,0.99883)--(0.6257,0.99881)--(0.63197,0.99879)--(0.63831,0.99877)--(0.6447,0.99874)--(0.65116,0.99871)--(0.65769,0.99868)--(0.66428,0.99864)--(0.67094,0.9986)--(0.67766,0.99855)--(0.68445,0.99849)--(0.69131,0.99843)--(0.69823,0.99836)--(0.70522,0.99827)--(0.71229,0.99817)--(0.71942,0.99806)--(0.72662,0.99792)--(0.73389,0.99777)--(0.74123,0.99759)--(0.74865,0.99737)--(0.75613,0.99712)--(0.76369,0.99681)--(0.77132,0.99645)--(0.77901,0.99602)--(0.78678,0.99549)--(0.79462,0.99485)--(0.80253,0.99407)--(0.8105,0.99311)--(0.81854,0.9919)--(0.82664,0.99039)--(0.8348,0.98848)--(0.84301,0.98603)--(0.85126,0.98286)--(0.85954,0.9787)--(0.86783,0.97316)--(0.87612,0.9657)--(0.88437,0.95547)--(0.89252,0.94124)--(0.90051,0.92111)--(0.90824,0.89226)--(0.91555,0.85049)--(0.92221,0.7901)--(0.9279,0.70439)--(0.93222,0.58844)--(0.93474,0.44463);
\draw[ultra thick, dashdotted, orange,->] 
(0.93474,0.44463)--(0.93516,0.28747)--(0.93347,0.13993)--(0.93009,0.025894);

\node[scale=0.8,left] at (0.9279,0.7) {$\Mc_{v_4}$};
\end{tikzpicture}
\caption{The ordering of orbits as stated in \Cref{theo:order} computed for the parameter set provided in \cite{eberl2017spatially}.}\label{fig:order}
\end{figure}
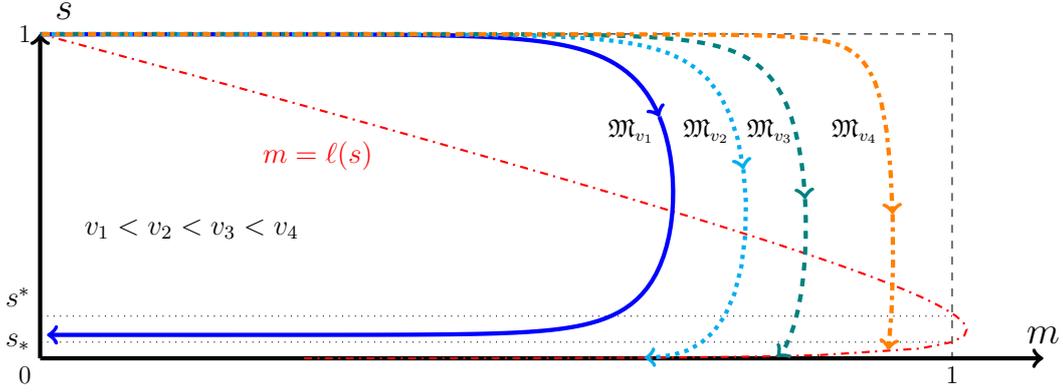

\begin{theorem}[The ordering of the orbits with respect to $v$] For $v>0$, let the function $\Mc_v:[s_{-\infty},1)\to [0,1]$ denote the $\Mc$-mapping introduced in \Cref{theo:01} that satisfies $\Mc_v(1)=0$. Then, for a fixed $s\in [s_{-\infty},1)$, $\Mc_v(s)$ varies continuously with $v$. Moreover,
\begin{enumerate}[label=(\roman*)]
\item $\Mc_v(s)$ is strictly increasing with $v$.
\item $\Mc_v(s)\to 0$ as $v\to 0$.
\item Let $\Gf(s)>0$ for all  $s\in (s_{-\infty},1)$. Then, $\Mc_v(s)\to 1$ as $v\to \infty$.
\end{enumerate}\label{theo:order}
\end{theorem}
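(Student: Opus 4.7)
The plan rests on the single integrated identity
\[
\Phi(\Mc_v(s)) \;=\; \frac{v^2}{\g}\int_s^1 \frac{\Mc_v(\vr)-\ell(\vr)}{f(\vr)}\,\dd\vr,
\]
which follows from \eqref{eq:McEqB} together with $\Mc_v(1)=0$. The continuity of $v\mapsto\Mc_v(s)$ for fixed $s$ is standard continuous dependence of ODEs on a parameter, applied to the approximating $\Mc_v^\e$ of \Cref{lemma:LowerBound} and passed to the $\e\searrow0$ limit.

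For (i), balancing $\Phi(\Mc_v)\sim C_0 \Mc_v^{a+2}$ (from \ref{prop:D}) against the identity gives $\Mc_v(s)\sim\bigl[(a+1)v^2(1-s)/(\g f(1))\bigr]^{1/(a+1)}$ as $s\to 1^-$, whose prefactor is strictly increasing in $v$; hence $\Mc_{v_2}>\Mc_{v_1}$ for $v_2>v_1$ in a left-neighborhood of $1$. If this ordering failed at a first $s_0<1$ with common value $m_0=\Mc_{v_1}(s_0)=\Mc_{v_2}(s_0)$ and $\Mc_{v_2}>\Mc_{v_1}$ on $(s_0,1)$, subtracting the identities at $s_0$ gives
\[
0 \;=\; v_1^2\!\int_{s_0}^1 \tfrac{\Mc_{v_2}-\Mc_{v_1}}{f}\,\dd\vr \;+\; (v_2^2-v_1^2)\!\int_{s_0}^1 \tfrac{\Mc_{v_2}-\ell}{f}\,\dd\vr,
\]
with both integrals nonnegative and the second strictly positive whenever $m_0>0$ (it equals $\g\Phi(m_0)/v_2^2$), a contradiction; the degenerate case $m_0=0$ is ruled out by uniqueness of orbits emanating from $\Mc=0$. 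For (ii), $(\Mc_v-\ell)/f$ is uniformly bounded in $v$, so the identity yields $\Phi(\Mc_v(s))=O(v^2)\to 0$ as $v\to 0$, whence $\Mc_v(s)\to 0$ by continuity of $\Phi^{-1}$ at $0$.

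For (iii), set $m^*(s_0):=\lim_{v\to\infty}\Mc_v(s_0)\in[0,1]$, which exists by (i). Assume $m^*(s_0)<1$: then $\Phi(\Mc_v(s_0))\le\Phi(m^*(s_0))<\infty$, so
\[
0 \;\le\; \int_{s_0}^1\tfrac{\Mc_v-\ell}{f}\,\dd\vr \;=\; \tfrac{\g\,\Phi(\Mc_v(s_0))}{v^2}\;\longrightarrow\;0.
\]
By (i) this integral is nondecreasing in $v$; a nondecreasing nonnegative sequence with limit $0$ must vanish identically, so $\Phi(\Mc_v(s_0))\equiv 0$ and $\Mc_v(s_0)\equiv 0$ for every $v>0$. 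This contradicts the existence of any single $v_0>0$ with $\Mc_{v_0}(s_0)>0$, which is what remains to establish. For $s_0$ close to $1$, \Cref{lemma:LowerBound} supplies this immediately via $\Mc_{v_0}(s_0)\ge\underline{\Mc}(s_0)>0$; for general $s_0\in(s_{-\infty},1)$ one shows that the endpoint $s_{\min}(v_0)$ at which the orbit first meets $\Mc=0$ going backward from $(0,1)$ satisfies $s_{\min}(v_0)<s_0$ for some $v_0$, using the boundary identity $\int_{s_{\min}(v)}^1(\Mc_v-\ell)/f\,\dd\vr=0$ together with the strict positivity of $\Gf$ on $(s_{-\infty},1)$.

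The main obstacle is precisely this last step of (iii)---forcing $s_{\min}(v_0)<s_0$ for large $v_0$ when $\Gf>0$ on $(s_{-\infty},1)$. This is the only place the $\Gf$ hypothesis is used, and it requires careful phase-plane bookkeeping that ties the orbit's descent profile to the geometry of $\Gf$. The other three parts reduce to the single integral identity above combined with squeeze/monotonicity arguments and are essentially mechanical.
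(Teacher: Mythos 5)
The integrated identity $\Phi(\Mc_v(s)) = \tfrac{v^2}{\g}\int_s^1 (\Mc_v-\ell)/f$ is indeed the engine of the paper's proof, and your use of it in (i) and (ii) is essentially the same mechanism the paper uses. Part (ii) is correct and matches the paper's bound $\Phi(\Mc_v(s))\leq(v/\g)^2\Gf(s)$. Your first--crossing argument in (i) is sound once the ordering in a left neighbourhood of $s=1$ is secured, and the conclusion $\int_{s_0}^1(\Mc_{v_2}-\ell)/f = \g\Phi(m_0)/v_2^2$ is exactly the paper's key inequality \eqref{eq:v1v2Ineq} in disguise. However, you anchor the local ordering near $s=1$ by a formal two--term asymptotic expansion of the degenerate orbit; this is heuristic where the paper is careful: the paper establishes the near--$s=1$ ordering rigorously on the $\Mc^\e_v$ family (where the derivative at $s=1$ is $-v^2\e/(\g f(1))$, a well--defined quantity), then passes $\e\searrow 0$ and upgrades to strictness via \eqref{eq:v1v2Ineq}. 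Your sketch should be re-routed through $\Mc^\e_v$ to be airtight.

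The continuity claim is a genuine gap that you do not flag. ``Standard continuous dependence on a parameter'' does not apply because $\Mc_v$ is the limit of orbits launched from the degenerate equilibrium $(0,1)$, not a solution of a regular IVP. Monotonicity in $\e$ and $v$ gives $\Mc_v(s) = \inf_\e\Mc^\e_v(s)$ as an infimum of continuous functions, which yields upper semi-continuity in $v$ and hence right-continuity; but \emph{left}-continuity is not automatic. The paper closes this by invoking Proposition~3.8 (the uniqueness of the orbit reaching $(0,1)$ at finite $\xi$): a left jump would produce two distinct $\Mc$-mappings with $\z(1)$ finite, a contradiction. Your proposal contains no substitute for this step.

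Part (iii) is the other gap, which you acknowledge. Your dichotomy observation --- that $m^*(s_0):=\lim_{v\to\infty}\Mc_v(s_0)$ is either $1$ or, via the monotone squeeze on $I(v)=\tfrac{\g}{v^2}\Phi(\Mc_v(s_0))$, equals $0$ with $\Mc_v(s_0)\equiv 0$ --- is a clean reformulation, and the missing piece (showing $\Gf>0$ forces the ``left endpoint'' of the positivity region down to $s_{-\infty}$) can in fact be closed by passing $v\to\infty$ in the boundary identity $0=\int_{s_1}^1(\Mc_v-\ell)/f$ via dominated convergence, getting $0=\g^{-1}\Gf(s_1)>0$. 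But you do not carry this out, and it is not ``essentially mechanical'': it needs the up-set structure of $\{s:\Mc_v(s)>0 \text{ for some } v\}$, the integrability of $(1+|\ell|)/f$ away from $s=0$, and the lower bound $\underline{\Mc}$ of Lemma~3.4 to seed the set. The paper's own proof of (iii) is structured quite differently --- it tracks the nullcline-crossing ordinate $S^\dagger_v\searrow s^*$, proves the result first on $(s^*,1)$ via \eqref{eq:v1v2Ineq}, then extends downward by a recursively defined sequence $\check{s}_k$ that spends the slack $\Gf(\check{s}_{k-1})>0$ at each step. Both routes use $\Gf>0$, but yours is arguably sharper in principle (one application of $\Gf>0$ at the critical $s_1$ rather than a chain); however, as written it is not a proof.
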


Schematically the ordering of the orbits is shown in \Cref{fig:order}.
\begin{proof}
\emph{(i)} For a fixed $v>0$, let $\Mc^\e_v$ denote the $\Mc$-mapping discussed in \Cref{lemma:LowerBound} with $\Mc^\e_v(1)=\e>0$.  Let $0<v_1<v_2$.  Observe that 
$$
\frac{\dd }{\dd s}\Phi(\Mc^\e_{v_1}(1))=-\frac{v^2_1}{\g\, f(1)}\e>  -\frac{v^2_2}{\g\, f(1)}\,\e=\frac{\dd }{\dd s}\Phi(\Mc^\e_{v_2}(1)) ,
$$
 implying  $\Phi(\Mc^\e_{v_1}(s))< \Phi(\Mc^\e_{v_2}(s))$ in a left neighbourhood of $s=1$.
We show below that $\Phi(\Mc_{v_1}^{\e}(s))< \Phi(\Mc_{v_2}^{\e}(s))$ for all $s\in [s_{-\infty},1)$ provided $\Mc_{v_2}^{\e}(s)>0$. To show this, let us assume the contrary, i.e. $\Mc_{v_1}^\e(s_1)=\Mc_{v_2}^\e(s_1)>0$ for some $s_1\in [s_{-\infty},1)$ with $\Mc^\e_{v_1}(s)< \Mc^\e_{v_2}(s)$ for all $s\in (s_1,1)$.
Subtracting two versions of \eqref{eq:McEqB}, we obtain
\begin{align}
\frac{\dd }{\dd s} (\Phi(\Mc_{v_1}^\e(s))-\Phi(\Mc_{v_2}^\e(s)))&=\frac{1}{f(s)}\left [-\frac{v^2_1}{\g}(\Mc_{v_1}^\e-\Mc_{v_2}^\e) -\dfrac{v_2^2-v_1^2}{\g}[\ell(s)-\Mc_{v_2}^\e] \right ] \nonumber\\
=&-\frac{v^2_1}{\g\, f(s)}(\Mc_{v_1}^\e-\Mc_{v_2}^\e) -\left (1-\frac{v_1^2}{v_2^2}\right )\frac{\dd }{\dd s} \Phi(\Mc_{v_2}^\e(s)).
\end{align}
Integrating in $(s_1,1)$ we get
\begin{align*}
\Phi(\Mc_{v_2}^\e(s_1))-\Phi(\Mc_{v_1}^\e(s_1))= \frac{v^2_1}{\g}\int_{s_1}^1(\Mc_{v_2}^\e-\Mc_{v_1}^\e)\frac{\dd s}{f(s)}+ \left (1-\frac{v_1^2}{v_2^2}\right ) \Phi(\Mc_{v_2}^\e(s_1))>0, 
\end{align*}
thus contradicting our assumption. Hence, $\Phi(\Mc_{v_1}^{\e}(s))< \Phi(\Mc_{v_2}^{\e}(s))$ for all $s\in [s_{-\infty},1)$ and passing to the limit $\e\to 0$ we obtain
\begin{align}\label{eq:McMonotone}
\Phi(\Mc_{v_1}(s))\leq \Phi(\Mc_{v_2}(s)) \text{ for all } s\in [s_{-\infty},1).
\end{align}
To prove that $\Mc_{v}$ strictly increases with $v$, we observe integrating \eqref{eq:McEqB} in $(s,1)$  that
$$
\frac{\Phi(\Mc_{v_2}(s))}{v_2^2}=\frac{1}{\g}\int_s^1 [\Mc_{v_2}-\ell](\vr)\frac{\dd \vr}{f(\vr)}\overset{\eqref{eq:McMonotone}}\geq  \frac{1}{\g}\int_s^1 [\Mc_{v_1}-\ell](\vr)\frac{\dd \vr}{f(\vr)}=\frac{\Phi(\Mc_{v_1}(s))}{v_1^2},
$$
which yields the important inequality
\begin{align}
\label{eq:v1v2Ineq}
\Phi(\Mc_{v_2}(s))\geq \frac{v_2^2}{v_1^2} \Phi(\Mc_{v_1}(s))>\Phi(\Mc_{v_1}(s)).
\end{align}
This proves that $\Mc_v(s)$ is strictly increasing with respect to $v$.

\emph{(ii)} Integrating \eqref{eq:McEqB} in $(s,1)$ we have
$$
\Phi(\Mc_v(s))=\frac{v^2}{\g}\int_s^1  (\Mc_v(\vr)-\ell(\vr))\frac{\dd \vr}{f(\vr)}< \frac{v^2}{\g}\int_s^1 (1-\ell(\vr))\frac{\dd \vr}{f(\vr)}\overset{\eqref{eq:relationGell}}=\left (\frac{v}{\g}\right )^2 \Gf(s).
$$ 
This proves point (ii).

\emph{(iii)} We prove the statement first in $(s^*,1)$. Recall that $s^*\in (0,1)$ satisfies $\ell(s^*)=1$. Let $S^{\dagger}_v\in (s^*,1)$ be such that $\Mc_v(S^{\dagger}_v)=\ell(S^{\dagger}_v)$. The existence of $S^{\dagger}_v$ is guaranteed by \Cref{cor:orbit}. Also, point (i) implies that $S^{\dagger}_v$ decreases as $v$ increases. Let $S^\dagger= \lim\limits_{v\to \infty} S^{\dagger}_v$. Then \Cref{cor:orbit} yields again 
\begin{align}\label{eq:Sdagger}
\max_{s\in (s_*,1)}\Mc_v(s)=\ell(S^{\dagger}_v), \text{ and hence, } \sup_{v>0}\max_{s\in (s_*,1)}\Mc_v(s) = \ell(S^{\dagger}).
\end{align}
 Let $S^\dagger >s^*$, yielding $\ell(S^{\dagger})<1$.  Then using \eqref{eq:v1v2Ineq} one has for some $v_1=v>0$ and $v_2=v^2$ that 
$$
\Phi(\Mc_{v_2}(S^{\dagger}_{v_1}))\geq \frac{v_2^2}{v_1^2} \Phi(\Mc_{v_1}(S^{\dagger}_{v_1}))= v^2 \Phi(\ell(S^{\dagger}_{v}))\to \infty, \text{ as } v\to \infty,
$$
which contradicts the upper bound in \eqref{eq:Sdagger} since $\ell(S^\dagger)<1$. Hence, $\lim\limits_{v\to \infty} S^{\dagger}_v=s^*$. 

Now let $s\in (s^*,1)$. Then, there exists $v_1>0$ such that $S^{\dagger}_{v_1}\geq s$. Using \eqref{eq:v1v2Ineq} for $v>v_1$,
\begin{align}
\Phi(\Mc_v(s))\geq \frac{v^2}{v_1^2} \Phi(\Mc_{v_1}(s))\geq \frac{v^2}{v_1^2} \Phi(\ell(s))\to \infty \text{ as } v\to \infty.
\end{align}
Hence, $\Mc_v(s)\to 1$ for all $s\in (s^*,1)$.

Now, we extend the result to $(s_{-\infty},s^*)$. Set $\check{s}_0=s^*$ and let $\check{s}_k\in [s_{-\infty},\check{s}_{k-1}]$, $k\in \N$, be recursively defined by the formula
$$
\int_{\check{s}_k}^{\check{s}_{k-1}}  \frac{\ell(\vr)}{f(\vr)}\, \dd \vr=  \int^{1}_{\check{s}_{k-1}}  \frac{1-\ell(\vr)}{f(\vr)}\, \dd \vr \overset{\eqref{eq:DefG}}= \g^{-1} \Gf(\check{s}_{k-1})>0.
$$
If such $\check{s}_k$ does not exist in $[s_{-\infty},\check{s}_{k-1}]$ then we set $\check{s}_k=s_{-\infty}$. Assume that $\lim_{v\to \infty}\Mc_v(s)\to 1$ holds for all $s\in (\check{s}_{k-1},1)$. This is certainly true for $k=1$. Then we show that it holds for all $s\in (\check{s}_{k},1)$. For all $s\in (\check{s}_k,\check{s}_{k-1})$ one has
\begin{align}\label{eq:Stilde}
\Phi(\Mc_v(s))&= \frac{v^2}{\g} \int_s^1 \frac{1}{f}(\Mc_v-\ell)= \frac{v^2}{\g} \left [\int_{\check{s}_{k-1}}^1 \frac{1}{f}(\Mc_v-\ell) + \int^{\check{s}_{k-1}}_{s} \frac{1}{f} (\Mc_v-\ell)\right ]\nonumber\\
&\geq \frac{v^2}{\g} \left [\int_{\check{s}_{k-1}}^1 \frac{1}{f} (\Mc_v-\ell) - \int^{\check{s}_{k-1}}_{s} \frac{\ell}{f} \right ].
\end{align}
The integral $\int_{\check{s}_{k-1}}^1 \frac{1}{f}(\Mc_v-\ell) \to \int_{\check{s}_{k-1}}^1 \frac{1}{f} (1-\ell) \overset{\eqref{eq:relationGell}}=\g^{-1}\Gf(\check{s}_{k-1})$ as $v\to \infty$. Thus $\int_{\check{s}_{k-1}}^1 \frac{1}{f}(\Mc_v-\ell) - \int^{\check{s}_{k-1}}_{s} \frac{\ell}{f} >0$ for large $v$ and $s>\check{s}_k$. Hence, passing $v\to \infty$ in \eqref{eq:Stilde} one gets, $\Mc_v(s)\to 1$ for all $s> \check{s}_k$, thus proving the statement.

\emph{Continuity:} Finally, we prove that $\lim\limits_{v\to v_0} \Mc_v(s)=\Mc_{v_0}(s) $ for any $v_0>0$. Let us handle the  case $v\searrow v_0$ first. From point (i), $\lim\limits_{v\searrow v_0} \Mc_v(s)\geq \Mc_{v_0}(s)$. Let us assume 
$$\lim\limits_{v\searrow v_0} \Mc_v(s)=:\bar{\Mc}_{v_0}(s)> \Mc_{v_0}(s) \text{ for some } s\in (s_{-\infty},1).$$ 
Just as in the proof of \Cref{theo:01}, $\bar{\Mc}_{v_0}$ satisfies \eqref{eq:McEq} with $v=v_0$. Since $\Mc^\e_{v_0}(s)\searrow \Mc_{v_0}(s)$ as $\e\searrow 0$ ($\Mc^{\e}_{v_0}$ as defined in \Cref{lemma:LowerBound} for $v=v_0$), one can choose $\e>0$ small enough such that $\Mc_{v_0}(s)<\Mc_{v_0}^\e(s)< \bar{\Mc}_{v_0}(s)$. Then $\Mc^{\e}_{v_0}$  and $\bar{\Mc}_{v_0}$ both satisfy \eqref{eq:McEq} and
$$
(\Mc^\e_{v_0}- \bar{\Mc}_{v_0})(1)=\e>0, \text{ whereas } (\Mc^\e_{v_0}- \bar{\Mc}_{v_0})(s)<0,
$$
implying that they intersect at some intermediate point. Since both $\Mc^{\e}_{v_0}$  and $\bar{\Mc}_{v_0}$ correspond to orbits in the phase--plane with the same $v=v_0$, and the orbits cannot intersect in $\{m>0\}$ we have our contradiction. 

The proof of the case $v\nearrow v_0$ follows from \Cref{pros:Unique} since $\bar{\Mc}_{v_0}=\lim_{v\nearrow v_0} \Mc_v< \Mc_{v_0}$ would imply that there are two mappings, $\Mc_{v_0}$ and $\bar{\Mc}_{v_0}$, which satisfy $\Mc(1)=0$ with a corresponding $\z(1)=0$, thus contradicting \Cref{pros:Unique}.
\end{proof}

From the proof of \Cref{theo:order} (ii), we obtain the following property:
\begin{corollary}[Bounds on $\Mc_v$]  For $v>0$, let the function $\Mc_v:[s_{-\infty},1]\to [0,1]$ denote the $\Mc$-mapping introduced in \Cref{theo:01} that satisfies $\Mc_v(1)=0$. Then, for all $v_0<v$ one has
$$
\frac{v^2}{v_0^2}\Phi(\Mc_{v_0}(s))\leq \Phi(\Mc_{v}(s))\leq \left (\frac{v}{\g}\right )^2 \Gf(s).
$$\label{cor:Bounds}
\end{corollary}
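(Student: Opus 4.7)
The plan is to extract both inequalities directly from the proof of \Cref{theo:order}, specifically parts (i) and (ii), which essentially already establish the two bounds in passing. No new machinery is needed; the work is in recognising the right lines of that proof and stating them cleanly.

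For the lower bound, I would invoke inequality \eqref{eq:v1v2Ineq} that was derived in part (i) of \Cref{theo:order}. That argument integrated the difference of two instances of \eqref{eq:McEqB} (one for $v_1$ and one for $v_2>v_1$) in $(s,1)$, used the monotonicity $\Phi(\Mc_{v_1}(s))\le \Phi(\Mc_{v_2}(s))$ from \eqref{eq:McMonotone}, and concluded $\Phi(\Mc_{v_2}(s))\ge (v_2^2/v_1^2)\,\Phi(\Mc_{v_1}(s))$. Setting $v_1=v_0$ and $v_2=v$ is precisely the claimed lower bound. All of this is already available; the only thing to check is that the argument, which was initially phrased for the $\e$-regularised mappings $\Mc^\e_v$, passes to the limit $\e\searrow 0$, but this was done in \eqref{eq:McMonotone} already and carries through.

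For the upper bound, I would integrate \eqref{eq:McEqB} in $(s,1)$ using $\Mc_v(1)=0$ (so $\Phi(\Mc_v(1))=0$) to obtain
\begin{equation*}
\Phi(\Mc_v(s)) \;=\; \frac{v^2}{\g}\int_s^1 \bigl(\Mc_v(\vr)-\ell(\vr)\bigr)\,\frac{\dd \vr}{f(\vr)}.
\end{equation*}
Since $\Mc_v(\vr)<1$ on $[s_{-\infty},1)$ by \Cref{lemma:orbit_exists}, the integrand is bounded above by $(1-\ell(\vr))/f(\vr)$, and \eqref{eq:relationGell} identifies the resulting integral with $\g^{-1}\Gf(s)$. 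This gives exactly $\Phi(\Mc_v(s))\le (v/\g)^2\,\Gf(s)$. This is the computation already carried out at the start of part (ii) of \Cref{theo:order}, so the corollary is essentially a restatement.

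I do not expect any obstacle here: both halves are literal restatements of inequalities derived earlier. The only subtlety worth flagging is that the lower bound as stated is non-strict, whereas \eqref{eq:v1v2Ineq} is strict for $\Mc_{v_0}(s)>0$; the non-strict version covers the degenerate case $\Mc_{v_0}(s)=0$ and follows trivially since then the left-hand side of the claimed inequality is $0\le \Phi(\Mc_v(s))$, which holds because $\Phi\ge 0$.
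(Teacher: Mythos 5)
Your proposal is correct and matches the paper's intent: the paper gives no separate proof, stating only that the corollary follows from the proof of \Cref{theo:order}, and you have correctly identified the two source inequalities — \eqref{eq:v1v2Ineq} from part (i) for the lower bound and the integration of \eqref{eq:McEqB} combined with $\Mc_v<1$ and \eqref{eq:relationGell} from part (ii) for the upper bound. Your remark on the degenerate case $\Mc_{v_0}(s)=0$ is unnecessary since \eqref{eq:v1v2Ineq} is already stated with $\geq$, but it does no harm.
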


\subsection{Existence/non-existence of travelling waves}\label{sec:ProofTheo}

\begin{proof}[\textbf{Proof of \Cref{theo:main}}]
\Cref{theo:01} and \Cref{pros:Unique} prove the existence of a unique orbit $(M_v,S_v)$ satisfying \eqref{eq:DS} and connecting with $(0,1)$ at $\xi=0$. \Cref{cor:orbit} shows that the orbit crosses the line $m=\ell(s)$ for some $s\in (s^*,1)$ and \Cref{lemma:orbit_exists} shows that it enters $\calR=[0,1)\times (s_{-\infty},1]$ through either $\{m=0, s\in [s_{-\infty},1]\}$ or through $\{s=s_{-\infty}\}$. \Cref{theo:order} proves that the orbit varies continuously with $v$, and for large $v$, the orbit enters through $s=s_{-\infty}$. Hence, it remains to be shown that for small $v$, the orbit enters through  $\{m=0, s\in [s_{-\infty},1]\}$. This will prove, by continuity, the existence of $v=\bar{v}>0$ such that the corresponding orbit connects with $(0,s_{-\infty})$ which is the intersection point of the two segments.

We show that for any $\hat{s}\in [s_{-\infty},1)$ there exists a corresponding $\hat{v}>0$  such that 
$$
\Mc_{\hat{v}}(\hat{s})=0 \text{ for all } v\leq \hat{v}.
$$
Assume that no such $\hat{v}>0$  exists. Then $\Mc_v(\hat{s})>0$ for all $v>0$. Integrating \eqref{eq:McEqB} in $(\hat{s},1)$ one then has
$$
0<\Phi(\Mc_v(\hat{s}))= \frac{v^2}{\g}\int_{\hat{s}}^1 (\Mc_v(\vr)-\ell(\vr))\,\frac{\dd \vr}{f(\vr)}.
$$
However, \Cref{cor:Bounds} implies that $\int_{\hat{s}}^1 (\Mc_v(\vr)-\ell(\vr))\,\frac{\dd \vr}{f(\vr)}<0$ for small enough $v>0$, since $\Mc_v(\vr)\searrow 0$ uniformly as $v\searrow 0$. This is a contradiction to $\Phi(\Mc_v(\hat{s}))>0$. Hence, the hypothesized $\hat{v}$ exists. Setting $\hat{s}=s_{-\infty}$ proves \Cref{theo:main}. 
\end{proof}
The profiles of $(M,S)$ as functions of $\xi\leq 0$ are shown in \Cref{fig:TWappearance}. An interesting feature of this TW is that the profile of $M$ has a sharp front at $\xi=0$, whereas, it has a diffused tail at the rear. The TW is in fact a travelling pulse since it connects an equilibrium state with  $M=0$ to another equilibrium state with $M=0$, in contrast to TWs for nonlinear diffusion problems with Fischer type source terms, see \cite{efendiev2009classification,de1991travelling,de1998travelling}.

\begin{figure}
\begin{subfigure}{.5\textwidth}
\includegraphics[scale=.45]{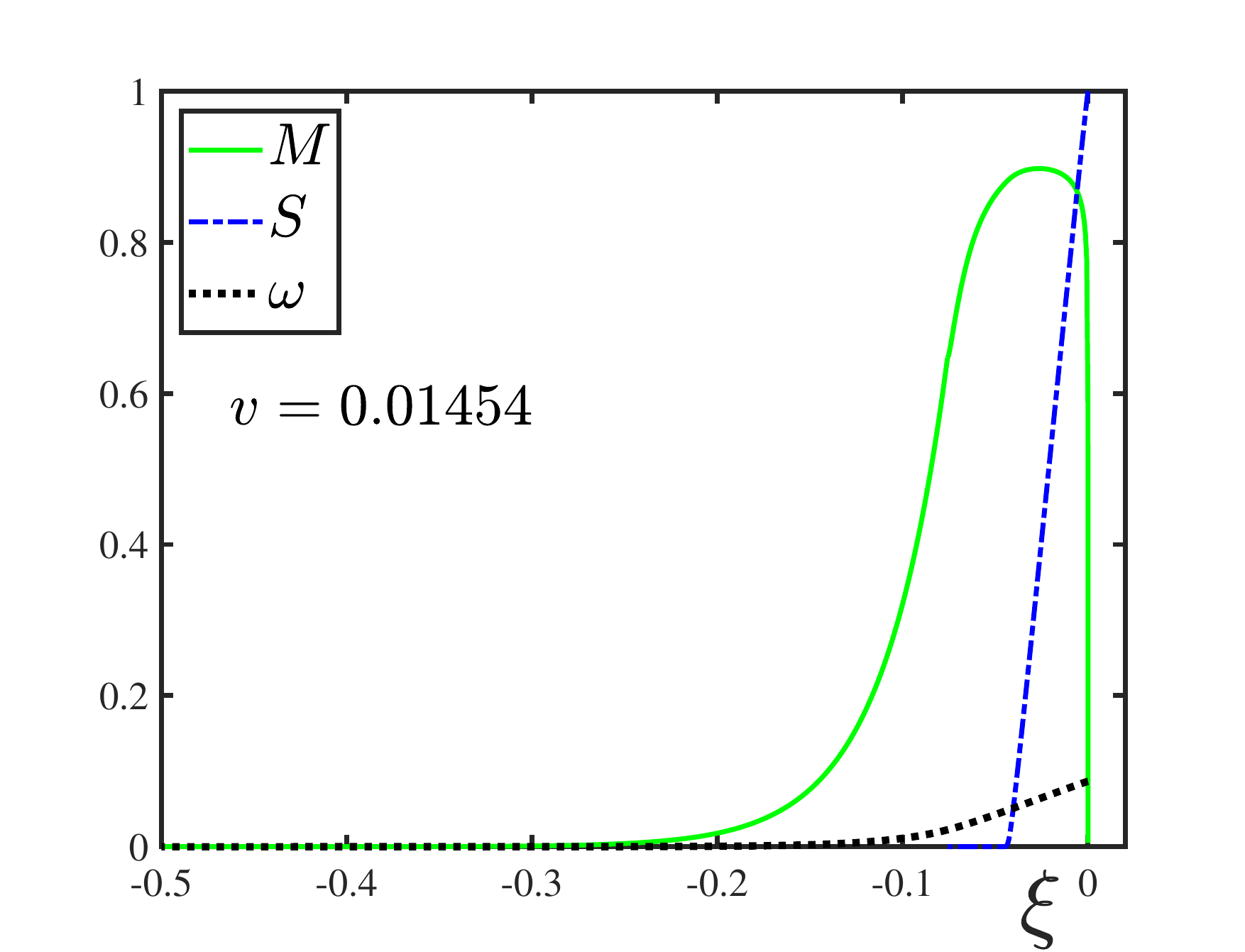}
\end{subfigure}
\begin{subfigure}{.32\textwidth}
\includegraphics[scale=.45]{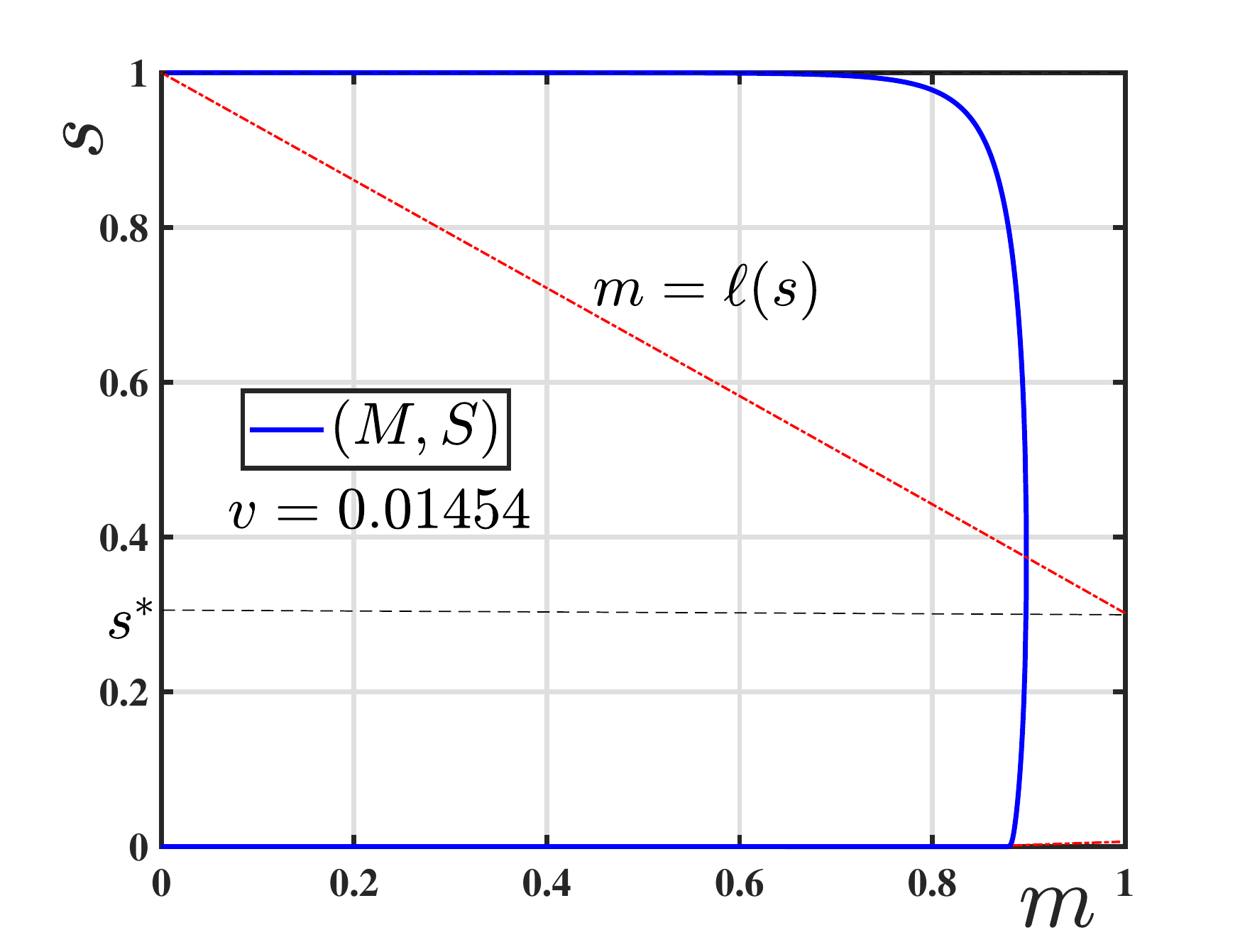}
\end{subfigure}
\caption{The TW profile for the default parameter set given in \Cref{tab:default_parameters} obtained using Algorithm \ref{algo:TW}. In the (left) plot the graphs of $M(\xi)$, $S(\xi)$ and $\om(\xi)=\int_{-\infty}^\xi M$ are shown. The (right) plot shows the orbit $(M,S)$ in the phase--plane (note that $s_{-\infty}\approx 10^{-60}$ in this case). The corresponding wave-speed is $v=0.01454$.} \label{fig:TWappearance}
\end{figure}

\begin{remark}[Consistency of the TW solution] Let $(M,S)$ be the TW solution described in \Cref{theo:main} and let $\Mc$ denote its $\Mc$-mapping. Then $\ell(s)\slash \Mc\to 0$ as $s\nearrow 1$ from \eqref{eq:dMdSinft}, and consequently recalling the TW Assumption \ref{ass:TW} one has
\begin{align}\label{eq:particle_speed}
\begin{cases}-D(M)\p_x M=-D(M)\dz M=v(M-\ell(S))\to 0,\\
-\tfrac{D(M)}{M}\p_x M=-\tfrac{D(M)}{M}\dz M=v\left (1-\tfrac{\ell(S)}{M}\right) \to v ,
\end{cases}
\quad  \text{ as } x-vt=\xi\nearrow 0. 
\end{align}
Here, the term $-D(M)\p_x M$ represents the flux and the term $-\tfrac{D(M)}{M}\p_x M$ is commonly referred to as the \textbf{particle speed} in the literature of the porous medium equation, see \cite[Chapter 19]{vazquez2007porous}. The fact that the flux vanishes at $\xi=0$ and the particle velocity is $v$ implies that the TW is physically consistent with the model \eqref{eq:PDE}. 
\end{remark}

\begin{remark}[Regularity of the sharp front]
From the limits in \eqref{eq:particle_speed}, one can also deduce the regularity of $M(\xi)$ as $\xi\nearrow 0$. Taking the expression \eqref{eq:DefDa} for $D$, one has in the limit $\xi\nearrow 0$, observing that $M(\xi)\searrow 0$, that $ \d\,M^{a-1}\,\dz M\sim -v$, or 
\begin{align}
M(\xi)\sim \tfrac{a\,v}{\d}|\xi|^{\frac{1}{a}}, \text{ for } \xi \text{ in a left neighbourhood of } 0.
\end{align}
Consequently, a sharp front exists at $\xi=0$, however, the TW profile still remains H\"older continuous.
\end{remark}

Finally, the techniques developed in this section also allow us to give necessary conditions for existence of the TWs.

\begin{proposition}[Non-existence of travelling wave solutions] Let \eqref{eq:parameter} be satisfied and $\Gf(\bar{s})<0$ for some $\bar{s}\in (s_{-\infty},1)$. Then there exists no TW orbit $(M,S)$ satisfying both  \eqref{eq:DS} and \eqref{eq:BC}. Similarly, if $\l\geq 1$  then no TW exists.\label{pros:NonExistence}
\end{proposition}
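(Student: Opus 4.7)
The plan is to handle the two sufficient conditions for non-existence separately, each by contradiction using tools already developed in the preceding subsections.

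For the first assertion, suppose for contradiction that a TW with wave-speed $v>0$ exists when $\Gf(\bar s)<0$ for some $\bar s\in(s_{-\infty},1)$, and let $\Mc_v$ denote its $\Mc$-map as in \Cref{theo:01}. Because the orbit must connect $(0,s_{-\infty})$ to $(0,1)$ with $S$ strictly increasing while $M>0$ (\Cref{cor:orbit}), it passes through $S=\bar s$ with $M>0$, so $\Mc_v(\bar s)>0$ and hence $\Phi(\Mc_v(\bar s))>0$. However, the universal upper bound furnished by \Cref{cor:Bounds} gives
$$\Phi(\Mc_v(\bar s))\leq (v/\g)^2\,\Gf(\bar s)<0,$$
contradicting $\Phi\geq 0$. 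Since this contradiction is independent of $v$, no TW can exist.

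For the second assertion, I would exploit that the reaction term in \eqref{eq:ODE1m} is uniformly non-positive: by \ref{prop:f}, $f(S)\leq 1\leq\lambda$, so $(f(S)-\lambda)M\leq 0$. Integrating \eqref{eq:ODE1m} over $\R$ and using the decay/flux conditions \eqref{eq:ODEBC} to eliminate every boundary contribution yields
$$0=\int_{-\infty}^{\infty}(f(S(\xi))-\lambda)\,M(\xi)\,\dd\xi.$$
Since the integrand is continuous and non-positive, it must vanish identically. Because $f$ is strictly increasing, the set $\{f(S)=\lambda\}$ is either empty or reduces to the single $\xi$ where $S=1$, so $M\equiv 0$. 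Combined with \eqref{eq:ODE1s}, this forces $S\equiv\text{const.}$, precluding a genuine travelling wave.

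The heart of the argument is part (a): \Cref{cor:Bounds} provides the decisive obstruction, since it caps $\Phi(\Mc_v)$ by a quantity that becomes negative under the hypothesis, ruling out all candidate wave-speeds simultaneously. The subtle point to watch for is the borderline case $\lambda=1$ with $f(1)=1$ in part (b), where one must check that the zero set of $f(S)-\lambda$ has measure zero so that $M>0$ on any interval would violate the integral identity; this is secured by the strict monotonicity of $S$ on $\{M>0\}$ already known from \eqref{eq:ODE2s}.
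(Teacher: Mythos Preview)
Your argument for the first assertion is essentially the paper's own: the paper integrates \eqref{eq:McEqB} over $(\bar s,1)$ and uses $\Mc_v<1$ to obtain
\[
0<\tfrac{\g}{v^2}\,\Phi(\Mc_v(\bar s))=\int_{\bar s}^1\frac{\Mc_v-\ell}{f}<\int_{\bar s}^1\frac{1-\ell}{f}=\g^{-1}\Gf(\bar s)<0,
\]
which is precisely the upper bound of \Cref{cor:Bounds} that you invoke. For the second assertion your route genuinely differs. The paper simply observes that $\l\geq 1$ forces $\ell'(s)=\tfrac{1}{\g f(s)}(f(s)-\l)\leq 0$ on $(0,1]$; since $g=\mathbb{I}+\l F$ is then monotone with $g(0^+)=\infty$ and $g(1)=1$, no $s_{-\infty}\in(0,1)$ solving \eqref{eq:Sinf} exists and the boundary data \eqref{eq:BC} cannot even be posed. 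Your approach instead integrates \eqref{eq:ODE1m} over $\R$ and exploits the sign of the reaction term, which is more self-contained (it bypasses the reduction to \eqref{eq:DS} and the $s_{-\infty}$ formalism) but, as you correctly flag, needs the extra bookkeeping in the borderline case $\l=f(1)=1$ and tacitly uses strict monotonicity of $f$ to force the zero set of $f(S)-\l$ to be a single point. The paper's one-line observation sidesteps that edge case at the price of relying on the earlier reduction.
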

\begin{proof}
For $\Gf(\bar{s})<0$, assume that such a solution exists for some $v>0$. Then, integrating \eqref{eq:McEq} in $(\bar{s},1)$ one has
$$
0< \frac{\g}{v^2}\Phi(\Mc_v(\bar{s}))=\int_{\bar{s}}^1 (\Mc_v(\vr)-\ell(\vr))\,\frac{\dd\vr}{f(\vr)}< \int_{\bar{s}}^1 (1-\ell)\,\frac{\dd\vr}{f(\vr)}= \g^{-1}\Gf(\bar{s})<0,
$$
which is a contradiction. For the case $\l\geq 1$, we have from \eqref{eq:DefEll} that $\ell'(s)\overset{\eqref{eq:DefF}}=\frac{1}{\g f(s)}[f(s)-\l]\overset{\ref{prop:f}}\leq 0$ and hence no $s_{-\infty}\in (0,1)$ satisfying \eqref{eq:Sinf} exists, thus giving the result.
\end{proof}

\section{Linear stability of the travelling waves in two space dimensions}\label{sec:stability}
In this section, we analyse the linear stability of the TW solutions in two space dimensions. It was observed numerically in \cite{eberl2017spatially,hughes2022RODE} that the TWs are stable with respect to random perturbations, and that solutions resulting from arbitrary initial data converge to a TW solution in a long rectangular domain. The stability of the TWs is further investigated numerically in \Cref{sec:num}. In this section, we use asymtotic expansions to prove linear $L^1$-stability of the TWs under certain (suitable) assumptions.

\subsection{Asymtotic expansion and linearization}
Let $(M^{\co{0}},S^{\co{0}})$ denote the TWs introduced in \Cref{theo:main}.  For a given $L>0$, we consider the infinite-cylinder $\Om:=\R\times (0,L)$  as the domain. Let $(x,y)\in \Om$ denote the spatial coordinates. Then, we consider the  two dimensional version of \eqref{eq:PDE} in $\Om$:
\begin{subequations}\label{eq:PDE2D}
\begin{align}
&\p_t M=\p_{x} [D(M)\,\p_x M] + \p_{y}[D(M)\,\p_y M] + \left (f(S)-\lambda\right )M,\\
& \p_t S= -\g\,f(S)\,M.
\end{align}
\end{subequations}
We consider the following boundary conditions for \eqref{eq:PDE2D}:
\begin{subequations}\label{eq:PDE2DBC}
\begin{align}
&M(\pm \infty,y,t)=0, \;\; (D(M)\,\p_x M)(\pm \infty,y,t)=0 &\text{ for all } y\in (0,L) \text{ and } t>0,\label{eq:PDE2DBCa}\\
&(D(M)\,\p_y M)(x,0,t)=(D(M)\,\p_y M)(x,L,t)=0 &\text{ for all } x\in \R \text{ and } t>0.\label{eq:PDE2DBCb}
\end{align}
\end{subequations}
The boundary condition \eqref{eq:PDE2DBCa} generalizes \eqref{eq:PDEC} to two dimensions, whereas, \eqref{eq:PDE2DBCb} introduces homogeneous Neumann conditions for the lateral boundaries.
As initial condition we choose for an arbitrarily small $\e>0$,
\begin{subequations}\label{eq:StabilityIniCond}
\begin{align}
\begin{cases}
M(x,y,0)&=M^{\co{0}}(x) + \e \sum_{n=0}^\infty M^{\co{1}}_{n,0}(x) \cos\left (\Lambda_n\, y\right ),\\[.5em]
S(x,y,0)&=S^{\co{0}}(x) + \e  \sum_{n=0}^\infty S^{\co{1}}_{n,0}(x) \cos\left (\Lambda_n\, y\right ),
\end{cases} \text{ where } \Lambda_n:=\tfrac{2\pi n}{L},
\end{align}
and the functions $M^{\co{1}}_{n,0},\; S^{\co{1}}_{n,0}\in L^1(\R^-)$ are bounded, smooth, and satisfy 
\begin{align}
M^{\co{1}}_{n,0}(0)= S^{\co{1}}_{n,0}(0)=0, \text{ and } \lim\limits_{x\searrow -\infty} M^{\co{1}}_{n,0}(x)=\lim\limits_{x\searrow -\infty} S^{\co{1}}_{n,0}(x)=0 \text{ for all }  n\in \N_0.
\end{align}
\end{subequations} 

\begin{remark}[Generality of the initial condition]
The initial condition \eqref{eq:StabilityIniCond} can be generalized to include also $\sin(\Lambda_n y)$ components which then covers all smooth initial conditions having periodic boundaries at $y\in\{0,\,L\}$ (by Fourier series expansion). The main result of this section (\Cref{theo:stability}) remains unchanged provided that $\sin(\Lambda_n y)$ contributions are added in \eqref{eq:M1S1fourier}.  For simplicity, we have used only the $\cos(\Lambda_n y)$ components here. Observe that the initial condition \eqref{eq:StabilityIniCond} is consistent with the zero Neumann conditions in \eqref{eq:PDE2DBCb}.
\end{remark}

We assume that the solution of \eqref{eq:PDE} can be described in this case by the asymptotic expansion
\begin{align}\label{eq:Asymptotic}
\begin{cases}
M(x,y,t)=M^{\co{0}}(x-vt) + \e M^{\co{1}}(x,y,t)+\dots,\\
S(x,y,t)=S^{\co{0}}(x-vt) + \e S^{\co{1}}(x,y,t)+\dots,
\end{cases} 
\end{align}
where $v>0$ represents the wave speed, and $M^{\co{1}}_{n}$, $S^{\co{1}}_n$ are differentiable functions such that the boundary conditions \eqref{eq:PDE2DBC} and initial conditions \eqref{eq:StabilityIniCond} are satisfied.
Then by inserting the expansion \eqref{eq:Asymptotic} in \eqref{eq:PDE2D} and equating the $\e$-order terms one obtains the system
\begin{subequations}\label{eq:stabPDEoriginal}
\begin{align}
\p_t M^{\co{1}}&= \p_{xx}(D(M^{\co{0}})M^{\co{1}})+ \p_{yy}(D(M^{\co{0}})M^{\co{1}}) \nonumber\\
&\quad + (f(S^{\co{0}})-\l) M^{\co{1}} + f'(S^{\co{0}})  M^{\co{0}} S^{\co{1}},\\
\p_t S^{\co{1}}&=-\g[f(S^{\co{0}}) M^{\co{1}} + f'(S^{\co{0}})  M^{\co{0}} S^{\co{1}}].
\end{align}
\end{subequations}
Since the TW coordinate, $\xi=x-vt$, is a more natural space-coordinate compared to $x$ to analyse this problem, we use the coordinate transform $(x,y,t)\mapsto (\xi,y',t')$, where
\begin{align}
\xi=x-vt,\; y'=y \text{ and } t'=t, \text{ giving } \p_t=\p_{t'} -v\p_{\xi},\; \p_x=\p_{\xi} \text{ and } \p_{y'}=\p_y.
\end{align}
 System \eqref{eq:stabPDEoriginal} is modified accordingly. For simplicity, referring to $(y',t')$  as $(y,t)$, we then have the following problem
\begin{subequations}\label{eq:stabPDEmain}
\begin{align}
\p_t M^{\co{1}}-v\,\p_{\xi} M^{\co{1}}&= \p_{\xi\xi}(D(M^{\co{0}})M^{\co{1}}) + \p_{yy}(D(M^{\co{0}})M^{\co{1}})\nonumber\\
& \quad + (f(S^{\co{0}})-\l) M^{\co{1}} + f'(S^{\co{0}})  M^{\co{0}} S^{\co{1}},\\
\p_t S^{\co{1}}-v\,\p_{\xi} S^{\co{1}}&=-\g[f(S^{\co{0}}) M^{\co{1}} + f'(S^{\co{0}})  M^{\co{0}} S^{\co{1}}].
\end{align}
\end{subequations}
Due to the form of the initial condition prescribed, we look for  $M^{\co{1}}$ and $S^{\co{1}}$ of the following form
\begin{align}\label{eq:M1S1fourier}
\begin{cases}
M^{\co{1}}(x,y,t)&=\sum\limits_{n=0}^\infty M^{\co{1}}_{n}(\xi,t) \cos\left (\Lambda_n\, y\right ),\\[.5em]
S^{\co{1}}(x,y,t)&=\sum\limits_{n=0}^\infty S^{\co{1}}_{n}(\xi,t) \cos\left (\Lambda_n\,y\right ), 
\end{cases} \text{ with } \Lambda_n=\tfrac{2\pi n}{L}.
\end{align}
Substituting \eqref{eq:M1S1fourier} in \eqref{eq:stabPDEmain}, observing that $(M^{\co{0}},S^{\co{0}})$ only depends on $\xi$, and equating the $\cos\left (\Lambda_n y\right )$ terms, we have for each $n\in \N_0$ the system
\begin{subequations}\label{eq:stabPDE}
\begin{align}
\p_t M^{\co{1}}_n-v\,\p_{\xi} M^{\co{1}}_n&= \p_{\xi\xi}(D(M^{\co{0}})M^{\co{1}}_n) - \Lambda_n^2 M^{\co{1}}_n\nonumber\\
& \quad + (f(S^{\co{0}})-\l) M^{\co{1}}_n + f'(S^{\co{0}})  M^{\co{0}} S^{\co{1}}_n,\label{eq:stabPDEa}\\
\p_t S^{\co{1}}_n-v\,\p_{\xi} S^{\co{1}}_n&=-\g[f(S^{\co{0}}) M^{\co{1}}_n + f'(S^{\co{0}})  M^{\co{0}} S^{\co{1}}_n]\label{eq:stabPDEb}.
\end{align}
\end{subequations}
In order to satisfy the boundary and initial conditions \eqref{eq:PDE2DBC}--\eqref{eq:StabilityIniCond} for each $n\in \N_0$ one must have
\begin{subequations}\label{eq:stabBCpde}
\begin{align}
M^{\co{1}}_n(0,t)&= S^{\co{1}}_n(0,t) =0 &\text{ for all } t>0, \\[.5em]
\lim\limits_{\xi\searrow -\infty} M^{\co{1}}_n(\xi)&=\lim\limits_{\xi\searrow -\infty}  S^{\co{1}}_n(\xi)= 0,&\\
M^{\co{1}}_n(\xi,0)&=M^{\co{1}}_{n,0}(\xi),\quad  S^{\co{1}}_n(\xi,0)=S^{\co{1}}_{n,0}(\xi)  &\text{ for all } \xi<0.
\end{align}
\end{subequations}

\begin{remark}[Choice of boundary conditions \eqref{eq:stabBCpde}] The boundary conditions at $\xi=0$ prescribed in \eqref{eq:stabBCpde} imply that $\p_\xi ( D(M^{\co{0}}) M^{\co{1}}_n)=D(M^{\co{0}}) \p_\xi M^{\co{1}}_n + \p_\xi D(M^{\co{0}}) M^{\co{1}}_n= 0$ at $\xi=0$ since $\p_{\xi} D(M^{\co{0}})$ is bounded. Thus, the flux is zero at $\xi=0$ which ensures that $(M^{\co{1}}_n(t),S^{\co{1}}_n(t))$, as a solution to \eqref{eq:stabPDE}, can be extended to $\R^+$ by setting
$$
M^{\co{1}}_n=S^{\co{1}}_n=0, \text{ for all } \xi>0 \text{ and } t>0. 
$$
Hence, \eqref{eq:stabPDE} is satisfied for all $\xi\in \R$ and $t>0$.
Finally, the boundary conditions $M^{\co{1}}_n=S^{\co{1}}_n=0$ at $\xi=-\infty$ is consistent with the initial conditions in \eqref{eq:StabilityIniCond} and make it possible to have absolutely integrable solutions  $(M^{\co{1}}_n,S^{\co{1}}_n)$.
\end{remark}

\subsection{Stability in $L^1$-norm}
\begin{theorem}[Stability of the travelling waves]\label{theo:stability}
Let $(M^{\co{0}},S^{\co{0}}):\R^-\to [0,1]^2$ be a travelling wave solution satisfying \eqref{eq:DS}--\eqref{eq:BC} with a given wave-speed $v>0$.
Assume that for all $n\in \N_0$, a continuously differentiable and absolutely integrable solution $(M^{\co{1}}_n,S^{\co{1}}_n):\R^-\times[0,\infty)  \to \R^2$ exists of the problem \eqref{eq:stabPDE} which satisfies the initial and boundary conditions \eqref{eq:stabBCpde}. Then, for any given $t>0$, one has 
\begin{align}
&\int_{\R^-}\left [|M^{\co{1}}_n(t)| + \tfrac{1}{\g}|S^{\co{1}}_n(t)|\right ] + \left (\l + \Lambda_n^2\right )\int_0^t \int_{\R^-} |M^{\co{1}}_n|  \nonumber\\
& \leq\int_{\R^-}\left [|M^{\co{1}}_{n,0}| + \tfrac{1}{\g}|S^{\co{1}}_{n,0}|\right ] + \int_0^t \int_{\{M^{\co{1}}_n\cdot S^{\co{1}}_n\leq 0\}} [2 f(S^{\co{0}})|M^{\co{1}}_n|-f'(S^{\co{0}})M^{\co{0}}| S^{\co{1}}_n|].\label{eq:L1contraction}
\end{align}
Consequently, if $\Lambda_n^2>  2 f(1)-\l>0$, then $\int_0^\infty \int_{\R^-} |M^{\co{1}}_n|<\infty $, and 
\begin{align}
&\int_{\R^-}\left [|M^{\co{1}}_n(t)| + \tfrac{1}{\g}|S^{\co{1}}_n(t)|\right ] \text{ is strictly decreasing  with respect to } t>0.\label{eq:Decay}
\end{align}
\end{theorem}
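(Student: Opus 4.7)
The plan is to derive the estimate \eqref{eq:L1contraction} by a Kato-type $L^1$ argument: multiply the $M^{\co{1}}_n$-equation \eqref{eq:stabPDEa} by a smooth, odd, non-decreasing approximation $\sgn_\e(M^{\co{1}}_n)$ of $\sgn(M^{\co{1}}_n)$, multiply \eqref{eq:stabPDEb} by $\tfrac{1}{\g}\sgn_\e(S^{\co{1}}_n)$, add the two identities, integrate over $\xi\in\R^-$, let $\e\searrow 0$, and finally integrate in $t$. The transport contributions reduce to $-v\p_\xi|M^{\co{1}}_n|$ and $-\tfrac{v}{\g}\p_\xi|S^{\co{1}}_n|$, whose spatial integrals vanish thanks to the boundary conditions in \eqref{eq:stabBCpde} at $\xi=0$ and $\xi\to -\infty$. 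The terms $-\Lambda_n^2 M^{\co{1}}_n$ and $(f(S^{\co{0}})-\l)M^{\co{1}}_n$ yield $-\Lambda_n^2\int|M^{\co{1}}_n|$ and $\int(f(S^{\co{0}})-\l)|M^{\co{1}}_n|$ respectively, supplying the $-(\l+\Lambda_n^2)\int|M^{\co{1}}_n|$ part on the left-hand side once the $-\l|M^{\co{1}}_n|$ piece is moved across.

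For the degenerate diffusion I use $\sgn(u)\p_{\xi\xi}(D\,u)\leq \p_{\xi\xi}(D|u|)$, which holds since $D(M^{\co{0}})\geq 0$: it follows from
\begin{equation*}
\p_{\xi\xi}(D|u|)-\sgn(u)\,\p_{\xi\xi}(Du)\;=\;D\,\bigl[\p_{\xi\xi}|u|-\sgn(u)\,\p_{\xi\xi}u\bigr]\;\geq\;0,
\end{equation*}
combined with the scalar Kato inequality for $|u|$. Integrating $\p_{\xi\xi}(D|M^{\co{1}}_n|)$ over $\R^-$ produces the boundary term $\bigl[\p_\xi(D(M^{\co{0}})|M^{\co{1}}_n|)\bigr]_{-\infty}^0$, which vanishes because $D(M^{\co{0}}(0))=D(0)=0$, $M^{\co{1}}_n(0)=0$, and both quantities decay at $-\infty$. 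The coupling contributions $f'(S^{\co{0}})M^{\co{0}}\sgn_\e(M^{\co{1}}_n)S^{\co{1}}_n$ from \eqref{eq:stabPDEa}, and $-f(S^{\co{0}})\sgn_\e(S^{\co{1}}_n)M^{\co{1}}_n-f'(S^{\co{0}})M^{\co{0}}\sgn_\e(S^{\co{1}}_n)S^{\co{1}}_n$ from \eqref{eq:stabPDEb}, together with the reaction term $f(S^{\co{0}})|M^{\co{1}}_n|$, combine in the limit $\e\to 0$ to cancel exactly on $\{M^{\co{1}}_n\cdot S^{\co{1}}_n>0\}$ and to leave the residue $2f(S^{\co{0}})|M^{\co{1}}_n|-2f'(S^{\co{0}})M^{\co{0}}|S^{\co{1}}_n|$ on $\{M^{\co{1}}_n\cdot S^{\co{1}}_n<0\}$, with analogous analysis on $\{M^{\co{1}}_n\cdot S^{\co{1}}_n=0\}$. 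Since $-2f'\leq -f'$, weakening the coefficient of the $|S^{\co{1}}_n|$-term and extending the indicator to $\{\leq 0\}$ yields precisely the right-hand side of \eqref{eq:L1contraction} after integration in $t$.

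For \eqref{eq:Decay}, under the hypothesis $\Lambda_n^2>2f(1)-\l$, the bad part of the right-hand side of \eqref{eq:L1contraction} is bounded by $2f(1)\int|M^{\co{1}}_n|$ and is absorbed by the dissipative $(\l+\Lambda_n^2)\int_0^t\int|M^{\co{1}}_n|$ with strict margin $(\l+\Lambda_n^2-2f(1))>0$. This gives a uniform-in-$t$ bound on $\int_0^t\int|M^{\co{1}}_n|$, hence $\int_0^\infty\int|M^{\co{1}}_n|<\infty$. The corresponding differential inequality reads
\begin{equation*}
\tfrac{\dd}{\dd t}\int_{\R^-}\!\bigl[|M^{\co{1}}_n|+\tfrac{1}{\g}|S^{\co{1}}_n|\bigr]\;\leq\;-(\l+\Lambda_n^2-2f(1))\!\int_{\R^-}\!|M^{\co{1}}_n|\;-\;\int_{\{\leq 0\}}\!f'(S^{\co{0}})M^{\co{0}}|S^{\co{1}}_n|,
\end{equation*}
whose right-hand side is strictly negative unless $(M^{\co{1}}_n,S^{\co{1}}_n)\equiv(0,0)$, yielding \eqref{eq:Decay}. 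I expect the main technical difficulty to lie in the rigorous justification of the Kato step and the vanishing of boundary terms at the sharp front $\xi=0$, where $D(M^{\co{0}})$ degenerates while $\p_\xi D(M^{\co{0}})$ is unbounded; this is handled via the $\sgn_\e$ regularisation together with the explicit H\"older behaviour $M^{\co{0}}(\xi)\sim C\,|\xi|^{1/a}$ near $\xi=0$ noted in the remarks following \Cref{theo:main}.
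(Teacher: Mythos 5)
Your proof follows essentially the same Kato-type $L^1$-contraction argument as the paper: multiply \eqref{eq:stabPDEa}, \eqref{eq:stabPDEb} by regularized sign functions, integrate over $\R^-$, let the regularization parameter vanish, split the coupling terms by the sign configuration of $(M^{\co{1}}_n,S^{\co{1}}_n)$, and obtain \eqref{eq:Decay} from the margin $\l+\Lambda_n^2-2f(1)>0$. The only cosmetic difference is in the degenerate diffusion term, where you invoke the Kato inequality $\sgn(u)\,\p_{\xi\xi}(Du)\leq\p_{\xi\xi}(D|u|)$ and vanishing of the resulting boundary term $\bigl[\p_\xi(D(M^{\co{0}})|M^{\co{1}}_n|)\bigr]_{-\infty}^0$, whereas the paper integrates by parts once and uses $|u\,\sgn_\e'(u)|<1$ to show the cross term $\int\sgn_\e'(M^{\co{1}}_n)\,M^{\co{1}}_n\,\p_\xi D(M^{\co{0}})\,\p_\xi M^{\co{1}}_n$ vanishes as $\e\searrow0$; both treatments rest on the same $\sgn_\e$ regularization and reach the same bound.
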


\begin{remark}[Stability of the travelling waves] $$ \text{If }\; L< \frac{2\pi}{\sqrt{2 f(1)-\l}}, \text{ then \eqref{eq:Decay} holds for all } n\in \N,$$ thus proving stability of the TWs in two dimensions. However, for $n=0$, $\Lambda_n=0$ and the stability of the TW is not guaranteed. In practice, this means that perturbations to the TW that have fast transverse variations decay.  The case  $n=0$ represents no perturbation in the transverse direction but only in the direction of the TW. Unfortunately, only conditional stability can be proven for this case using our analysis. Increased stability due to transverse variations, similar to \Cref{theo:stability}, has been studied earlier, for example in  \cite{van2019stability}. Using numerical simulations we show in \Cref{sec:transient} that the TWs are also stable in terms of large longitudinal perturbations. 
\end{remark}

\begin{proof}
The proof uses the well-known $L^1$-contraction principle. We reproduce a formal version here for the sake of brevity. 

Let $\sgn_\e:\R\to [-1,1]$ denote a regularised version of the signum function for $\e>0$ with $\calU_\e$ as its primitive. More precisely,
\begin{align}
\sgn_\e(u):=\begin{cases}
1 &\text{ if } u>\e,\\
u\slash \e  &\text{ if } |u|\leq \e,\\
-1 &\text{ if } u<-\e,\\
\end{cases} \text{ and }\; \calU_\e(u)=\int_0^u \sgn_\e.
\end{align}
Note the following properties of these functions for future use
\begin{subequations}
\begin{align}\label{eq:propSignA}
&|u\, {\sgn_\e}'(u)|\begin{cases}
<1 \text{ for } |u|\leq\e,\\
=0 \text{ for } |u|>\e,\\
 \end{cases} \text{ and }\; \calU_\e(u)\geq 0 \text{ with equality only for } u=0, \\
& \sgn_\e(u)\to \sgn(u), \; u \,\sgn_\e(u)\to |u|, \text{ and } \calU_\e(u)\to |u| \text{ pointwise as } \e\searrow 0.\label{eq:propSignB}
\end{align}
\end{subequations}
We use $\sgn_\e(M^{\co{1}}_n)$ as a test function in \eqref{eq:stabPDEa}. Multiplying \eqref{eq:stabPDEa} by $\sgn_\e(M^{\co{1}}_n)$ and integrating in $\R^-$ one has
term by term
\begin{subequations}\label{eq:L1contrM}
\begin{align}
&\int_{\R^-} \sgn_\e(M^{\co{1}}_n)\,\p_t M^{\co{1}}_n=\p_t \left ( \int_{\R^-} \calU_\e(M^{\co{1}}_n)\right )\overset{\eqref{eq:propSignB}}\to \p_t \left ( \int_{\R^-} |M^{\co{1}}_n|\right ) \text{ as } \e\searrow 0,\\
-v&\int_{\R^-} \sgn_\e(M^{\co{1}}_n)\,\p_\xi M^{\co{1}}_n = -v\int_{\R^-} \p_\xi\,\calU_\e(M^{\co{1}}_n) = v[\calU_\e(M^{\co{1}}_n(-\infty,t))- \calU_\e(0)]\overset{\eqref{eq:stabBCpde}} = 0.
\end{align}
From the second order term, one has using integration by parts, and the boundary conditions in \eqref{eq:stabBCpde} that
\begin{align}
&\int_{\R^-} \sgn_\e(M^{\co{1}}_n)\, \p_{\xi\xi}(D(M^{\co{0}})M^{\co{1}}_n)= - \int_{\R^-} \p_{\xi}\left (\sgn_\e(M^{\co{1}}_n)\right  )\p_{\xi}(D(M^{\co{0}})M^{\co{1}}_n)\nonumber\\
&=- \int_{\R^-} {\sgn_\e}'(M^{\co{1}}_n) D(M^{\co{0}})|\p_{\xi} M^{\co{1}}_n|^2 - \int_{\R^-} {\sgn_\e}'(M^{\co{1}}_n)\,M^{\co{1}}_n\, \p_\xi \left (D(M^{\co{0}})\right )\p_{\xi} M^{\co{1}}_n\nonumber\\
&\overset{\eqref{eq:propSignA}}\leq  \int_{\{|M^{\co{1}}_n|<\e\}} |\p_\xi D(M^{\co{0}})\,\p_{\xi} M^{\co{1}}_n|\to 0 \qquad \text{ as } \e\searrow 0.
\end{align}
Finally, for the source terms, one has as $\e\to 0$,
\begin{align}
&\int_{\R^-} \sgn_\e(M^{\co{1}}_n)\,[(f(S^{\co{0}})-\l-\Lambda_n^2) M^{\co{1}}_n + f'(S^{\co{0}})  M^{\co{0}} S^{\co{1}}_n]\nonumber\\
&\overset{\eqref{eq:propSignB}}\to - (\l+\Lambda_n^2)\int_{\R^-} |M^{\co{1}}_n| + \int_{\R^-} \sgn(M^{\co{1}}_n)\,[f(S^{\co{0}}) M^{\co{1}}_n + f'(S^{\co{0}})  M^{\co{0}} S^{\co{1}}_n].
\end{align}
\end{subequations}
Similarly, multiplying \eqref{eq:stabPDEb} by $\sgn_\e(S^{\co{1}}_n)$, integrating in $\R^-$ and following the steps of \eqref{eq:L1contrM} one has 
\begin{align}\label{eq:L1contrS}
\p_t \left ( \int_{\R^-} |S^{\co{1}}_n|\right )\leq -\g \int_{\R^-} \sgn(S^{\co{1}}_n)\,[f(S^{\co{0}}) M^{\co{1}}_n + f'(S^{\co{0}})  M^{\co{0}} S^{\co{1}}_n].
\end{align}
Adding \eqref{eq:L1contrM}--\eqref{eq:L1contrS} one thus obtains 
\begin{align}\label{eq:L1contr}
&\p_t \left ( \int_{\R^-}\left [|M^{\co{1}}_n| + \tfrac{1}{\g}|S^{\co{1}}_n|\right ]\right ) + \left (\l + \Lambda_n^2\right ) \int_{\R^-} |M^{\co{1}}_n|\nonumber\\
&\leq \int_{\R^-} (\sgn(M^{\co{1}}_n)-\sgn(S^{\co{1}}_n))\,[f(S^{\co{0}}) M^{\co{1}}_n + f'(S^{\co{0}})  M^{\co{0}} S^{\co{1}}_n].
\end{align}
Note that if both $ M^{\co{1}}_n,\, S^{\co{1}}_n>0$ or $ M^{\co{1}}_n,\, S^{\co{1}}_n<0$ then the right hand side vanishes. On the other hand, if $ M^{\co{1}}_n> 0$ and $ S^{\co{1}}_n<0$ then $\sgn(M^{\co{1}}_n)-\sgn(S^{\co{1}}_n)=2$ and $f(S^{\co{0}}) M^{\co{1}}_n + f'(S^{\co{0}})  M^{\co{0}} S^{\co{1}}_n= f(S^{\co{0}}) |M^{\co{1}}_n|-  f'(S^{\co{0}})  M^{\co{0}} |S^{\co{1}}_n|$. Hence, the integrand on the right hand side becomes 
$$ 2 (f(S^{\co{0}}) |M^{\co{1}}_n|-  f'(S^{\co{0}})  M^{\co{0}} |S^{\co{1}}_n|)<  2f(S^{\co{0}}) |M^{\co{1}}_n|-  f'(S^{\co{0}})  M^{\co{0}} |S^{\co{1}}_n|.$$ 
By symmetry, we have the same inequality when $ M^{\co{1}}_n< 0$ and $ S^{\co{1}}_n>0$. Including the trivial cases of $ M^{\co{1}}_n= 0$ and/or $ S^{\co{1}}_n=0$ which in both cases yield
$$(\sgn(M^{\co{1}}_n)-\sgn(S^{\co{1}}_n))\,[f(S^{\co{0}}) M^{\co{1}}_n + f'(S^{\co{0}})  M^{\co{0}} S^{\co{1}}_n]\leq  2f(S^{\co{0}}) |M^{\co{1}}_n|-  f'(S^{\co{0}})  M^{\co{0}} |S^{\co{1}}_n|,$$
 we have  \eqref{eq:L1contraction} by integrating \eqref{eq:L1contr} in time.

Since $f(S^{\co{0}})< f(1)$ in $\R^-$, we have $\l+\Lambda_n^2>2f(S^{\co{0}})$ in \eqref{eq:Decay}. This implies that there exists a constant $c_1>0$ such that for any $0<t_1<t_2$,
$$
\int_{\R^-}\left [|M^{\co{1}}_n(t_2)| + \tfrac{1}{\g}|S^{\co{1}}_n(t_2)|\right ] + c_1\int_{t_1}^{t_2} \int_{\R^-} |M^{\co{1}}_n|   \leq\int_{\R^-}\left [|M^{\co{1}}_{n}(t_1)| + \tfrac{1}{\g}|S^{\co{1}}_{n}(t_1)|\right ].
$$
Hence, $\int_{\R^-}\left [|M^{\co{1}}_n(t)| + \tfrac{1}{\g}|S^{\co{1}}_n(t)|\right ] $ is a decreasing function having a limit, and $\int_0^t \int_{\R^-} |M^{\co{1}}_n| $ is bounded uniformly for all $t>0$. 
\end{proof}

\section{Numerical results}\label{sec:num}
In this section, we verify the analytical predictions of  \Cref{sec:TW existence,sec:stability} numerically by computing solutions of the PDE systems \eqref{eq:PDE} and \eqref{eq:PDE2D}. It is shown that for an arbitrary initial condition, the PDE solutions indeed converge to a profile moving with constant speed both in one and two space dimensions, which further shows numerically the stability of the TWs. The TWs are also  obtained directly by solving \eqref{eq:DS} and finding the correct wave-speed by a bisection algorithm. The TWs produced by the PDE simulations and the bisection algorithm are shown to coincide. The algorithm is then used to further investigate the parametric dependence of the TW profiles nd wave-speed for a greater range of parameters. Finally, a numerical continuation approach is  considered which enables us to study the limiting cases for which the assumptions in \Cref{theo:main} are satisfied.

\subsection{PDE simulations}\label{sec:PDEscheme}
Here, we solve the PDE systems \eqref{eq:PDE} (one space dimension) and \eqref{eq:PDE2D} (two space dimensions) with expressions \eqref{eq:DefD} for $D,\,f$, on finite rectangular domains using homogeneous Neumann boundary conditions. The backward Euler method with uniform time-step $\D t>0$ is used for the temporal discretization.  A uniform square grid with mesh-size $\D x>0$ is used to discretize the domain. The PDEs are solved using the standard two-point flux approximation finite volume method. The solution is approximated at the centres of each grid cell. The diffusion across the cell interfaces are approximated via arithmetic averaging as described in \cite{eberl2007finite}. For time integration we use the trapezoidal method. The large system of arithmetic equations generated by this approach is solved via a fixed point iteration scheme described in \cite{hughes2022RODE}. The default set of simulation parameters is given in Table \ref{tab:default_parameters}.  These parameters will be used throughout this section unless stated otherwise.

\begin{table}[h!]
\centering
\begin{tabular}{|llll|}
\hline
Model parameter            & Symbol         & Value     & Reference \\ \hline
Motility coefficient in \eqref{eq:DefD} & $\delta$ & $10^{-6}$ & \cite{eberl2017spatially}\\
Diffusion exponent 1 in \eqref{eq:DefD} & $a$ & 4.0 & \cite{eberl2017spatially}\\
Diffusion exponent 2 in \eqref{eq:DefD}& $b$ & 4.0 & \cite{eberl2017spatially} \\
Half saturation concentration in \eqref{eq:DefD} & $\kappa$ & 0.01 & \cite{eberl2017spatially}\\
Maximum consumption rate & $\gamma$ & 0.4 & \cite{eberl2017spatially}\\ 
Cell-loss rate & $\lambda$ & 0.42 & \cite{eberl2017spatially}\\
\hline
\end{tabular}
\caption{Default model parameters (their names refer to the context of cellulolytic biofilm models) used for the simulations of \eqref{eq:PDE}, \eqref{eq:PDE2D} and \eqref{eq:DS}. The functions $D$ and $f$ are as in \eqref{eq:DefD}. All values in the table are dimensionless.} \label{tab:default_parameters}
\end{table}

\subsubsection{One dimensional results: transience and stability}\label{sec:transient}
In this case, our spatial domain is $(0,H)$ for $H>0$. The default initial conditions for the system are for $h\in (0,1)$ and $d\in (0,H)$,
\begin{align} \label{eq:1D_init}
M(x,0)=\begin{cases}
h\left (1-\tfrac{x^4}{d^4}\right ) & \text{if } x\leq d,\\
0 & \text{otherwise},
\end{cases} \quad
S(x,0)\equiv 1.
\end{align}
The numerical parameters used for the simulation are
\begin{align}\label{eq:1D_NumPar}
H=1,\;\; \D x=2^{-N}, \;\;  \D t=10^{-2}\,(2^9\Delta x)^2,\;\; d=\tfrac{5}{127},\;\; h=0.1.
\end{align}
In the above, $N$ is an integer between 9 and 16. A grid independence study is done in Appendix \ref{sec:grid_refinement} and based on the result the default value of $N=14$ is chosen. 

\begin{figure}[h!]
\centering
\includegraphics[width=.32\textwidth]{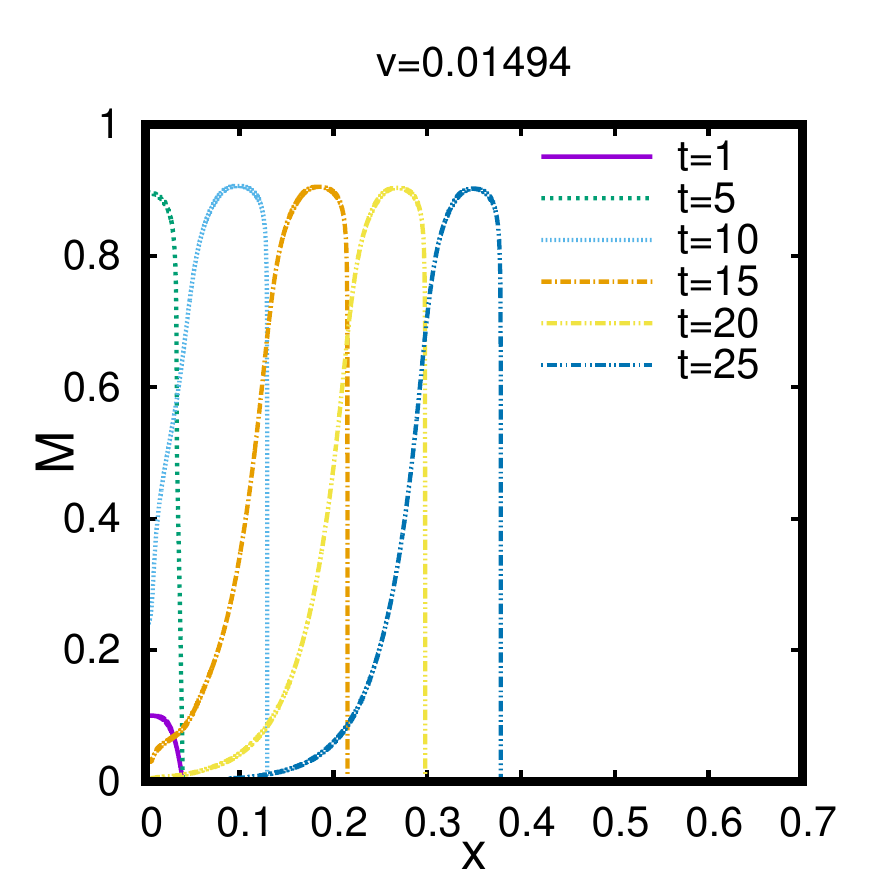}
 \includegraphics[width=.32\textwidth]{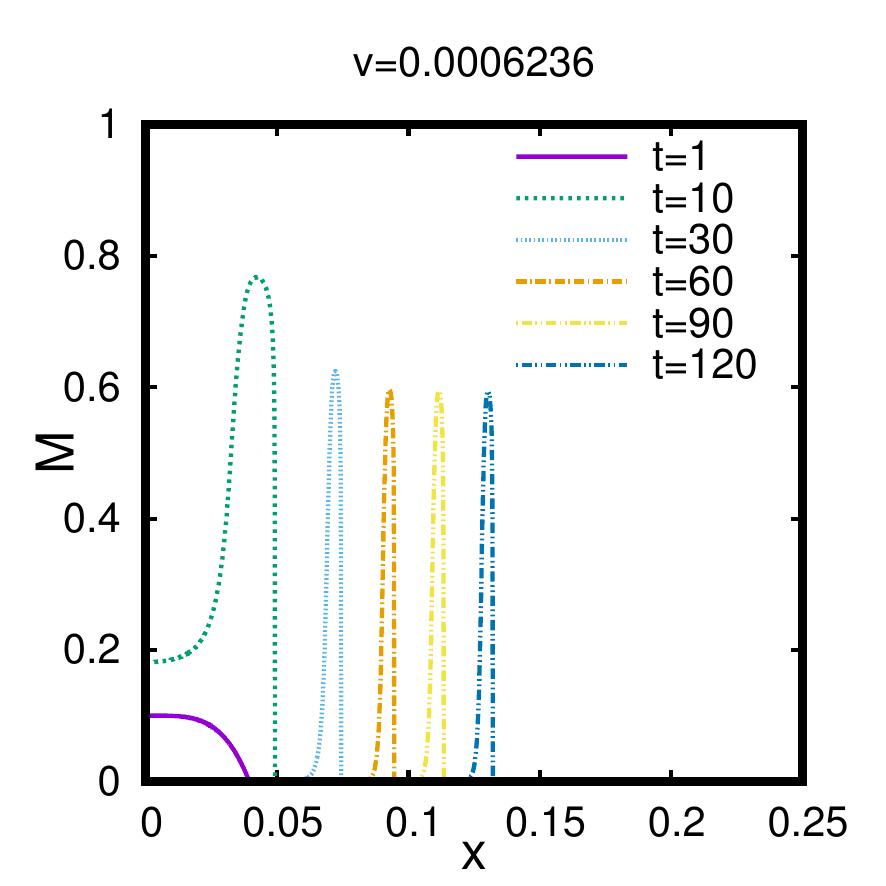}
 \includegraphics[width=.32\textwidth]{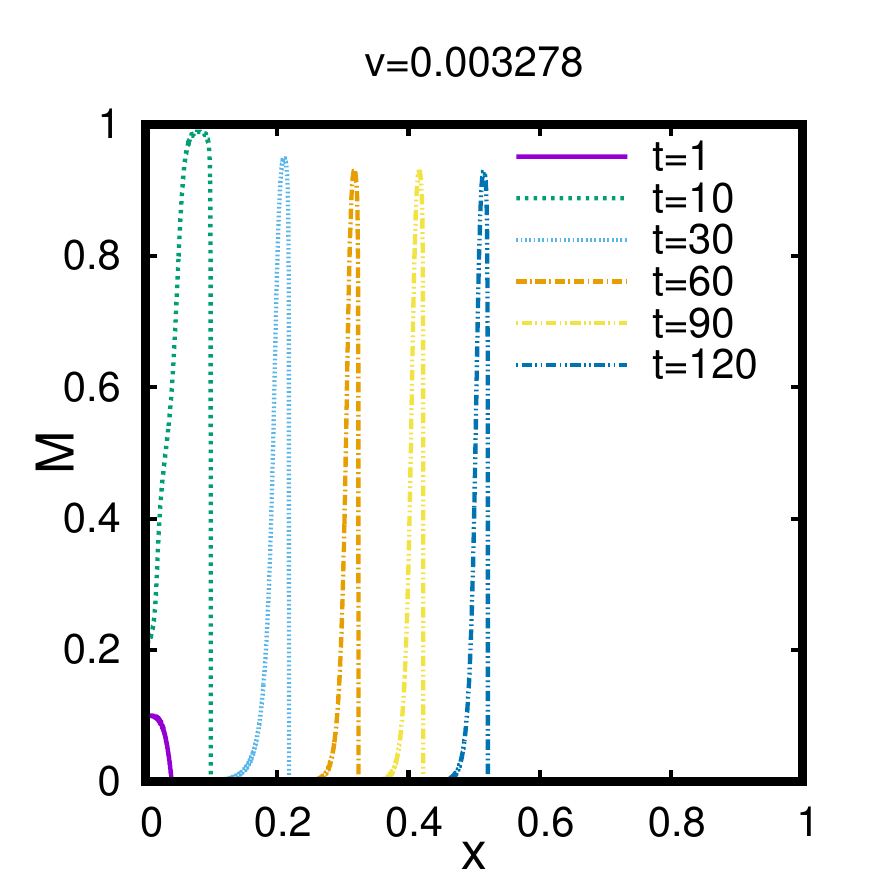} 
\caption{Transient behaviour of the $M$-profiles obtained from the PDE simulations. The parameters are taken from \Cref{tab:default_parameters} except in (left) $\l=0.42$, (center) $\l=0.60$, and (right) $a=b=2$.}\label{fig:transient}
\end{figure}

Observe that the initial condition \eqref{eq:1D_init} is arbitrary and has no relation to the TWs. Nevertheless, the numerical solutions develop into profiles that move with a constant speed. This is shown in \Cref{fig:transient} using three sets of parameters. To estimate the wave-speed of the developed profile, we calculate the interface of the biomass wave by taking the largest $x$-coordinate such that $M(x,t)>10^{-2}$. We use $10^{-2}$ as an approximation for $0$ to avoid any numerical noise generated due to the degeneracy. Once we have the wave interface, we can estimate the wave speed by fitting a linear function through the data points corresponding to the wave interface. This is done using the built-in function $\textsf{fit}$ from GNUPLOT which gives the wave-speed. \Cref{fig:wavespeed_calc} (left) illustrates the process for the parameter set in \Cref{tab:default_parameters}.  To calculate the wave profile, we plot the solution $M(x,t)$ after the profiles have developed. To verify that a TW solution exists, we plot the wave profiles at various time points $t_i$ and horizontally translate the waves by $v(t_0-t_i)$. If the wave profiles coincide, then we conclude that a TW solution exists. Unless otherwise stated, all the PDE-simulations are verified to permit a TW solution. 

\begin{figure}[h!]
\centering
\begin{subfigure}{.48\textwidth}
\includegraphics[scale=.55]{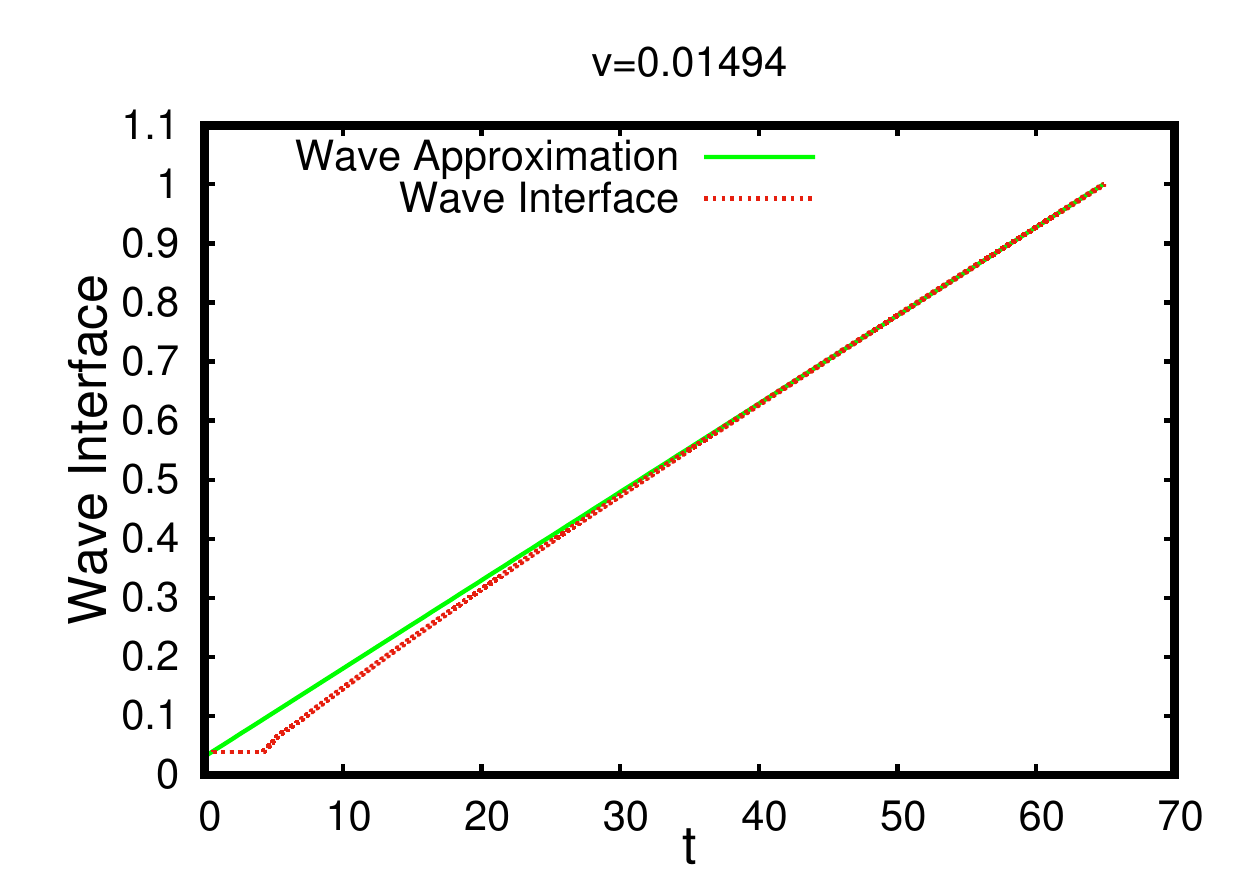} 
\end{subfigure}
\begin{subfigure}{.48\textwidth}
\includegraphics[scale=.55]{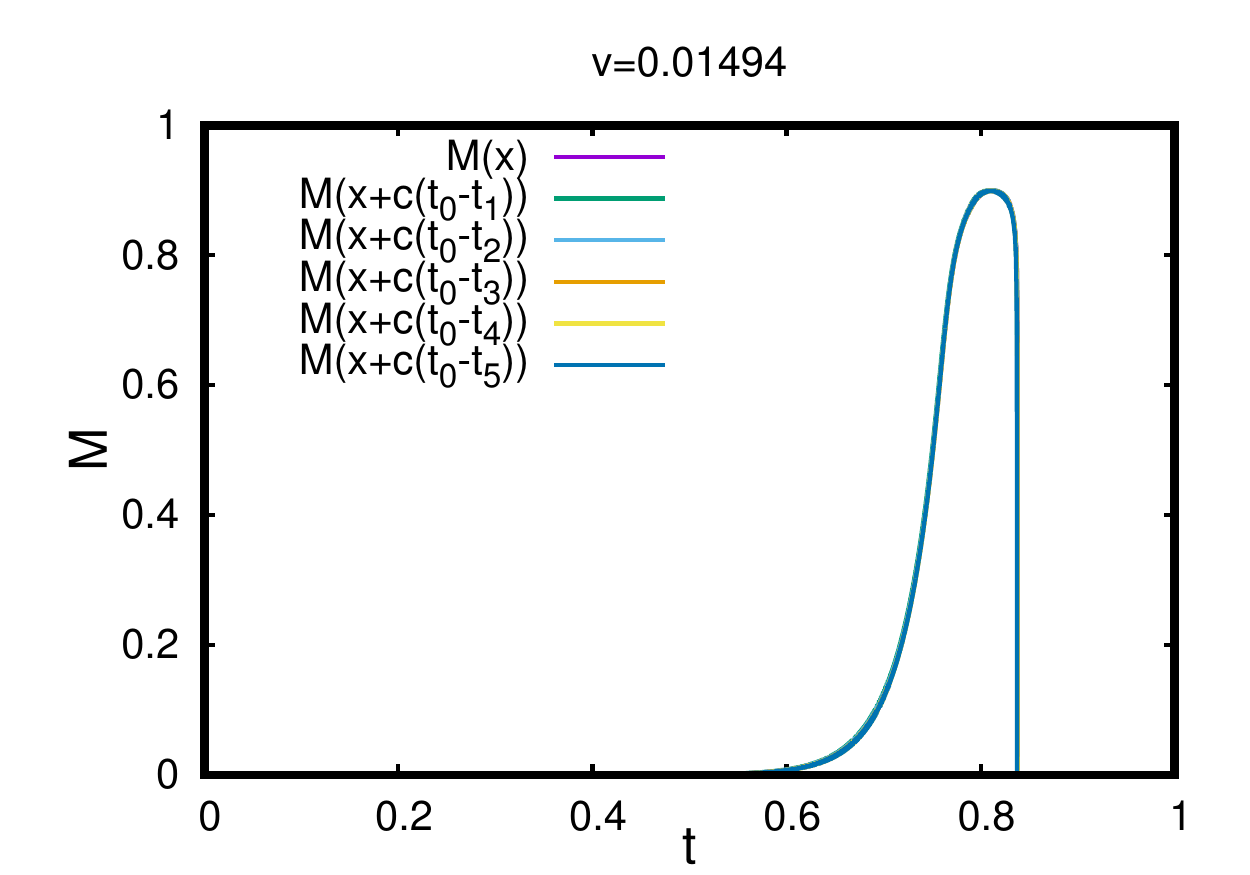} 
\end{subfigure}
\caption{(left) Wave-speed calculation for the default parameters in \Cref{tab:default_parameters} using linear fitting. The resulting wave-speed estimated is $v=0.01494$. (right) The $M$-profiles horizontally shifted by $v(t_0-t_i)$ for various time points $t_i\in \{56,57,58,59,60,64\}$.}\label{fig:wavespeed_calc}
\end{figure}

\subsubsection{Two dimensional results:  transience and stability}
For the two dimensional results our spatial domain is the rectangle $[0,H]\times [0,L]$. The initial condition prescribed is
\begin{align} \label{eq:2D_init}
M(x,y,0)=\begin{cases}
h\left (1-\tfrac{x^4}{d^4}\right )\left (1+ \tfrac{1}{5}\cos\left (\frac{2\pi y}{L}\right )\right ) & \text{if } x\leq d,\\
0 & \text{otherwise},
\end{cases} \quad
S(x,y,0)\equiv 1.
\end{align}
The numerical parameters for this case are
\begin{align}
H=2,\;\; L=1,\;\; \D x=2^{-9}, \;\; \D t=0.01,\;\; d=\tfrac{5}{127},\;\; h=0.1.
\end{align}

As before, the initial condition in this case is arbitrary and deviates largely from any TW profile both in terms of its $x$ and $y$ variations. As such, it does not satisfy the restrictions imposed in \Cref{sec:stability} and \Cref{theo:stability} for proving linear stability.  Nevertheless, \Cref{fig:2D_stability} shows that the numerical solution still develops slowly into a planar front that moves with a constant speed. This speed, estimated in \Cref{fig:wavespeed2D}, is very close to the wave-speed computed at the same level of  discretization, i.e.,  $\D x=2^{-9}$, for the one dimensional case, see \Cref{tab:speeds} of  Appendix \ref{sec:grid_refinement}.

\begin{figure}[h!]
\centering
\begin{subfigure}{.48\textwidth}
\includegraphics[width=\textwidth]{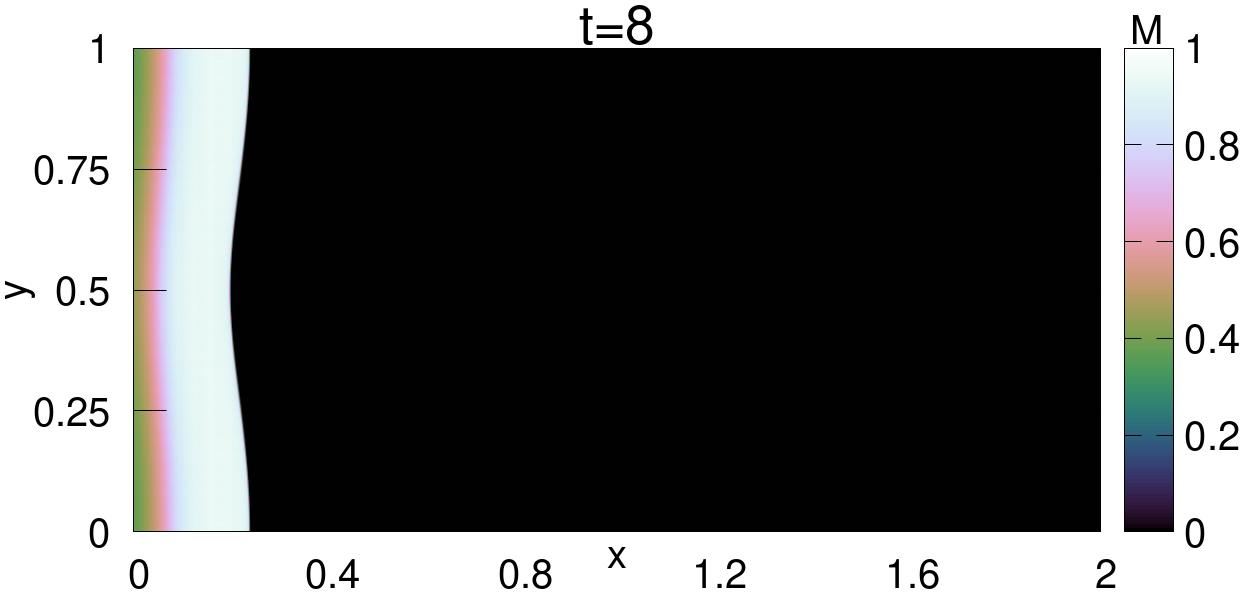} 
\end{subfigure}
\begin{subfigure}{.48\textwidth}
\includegraphics[width=\textwidth]{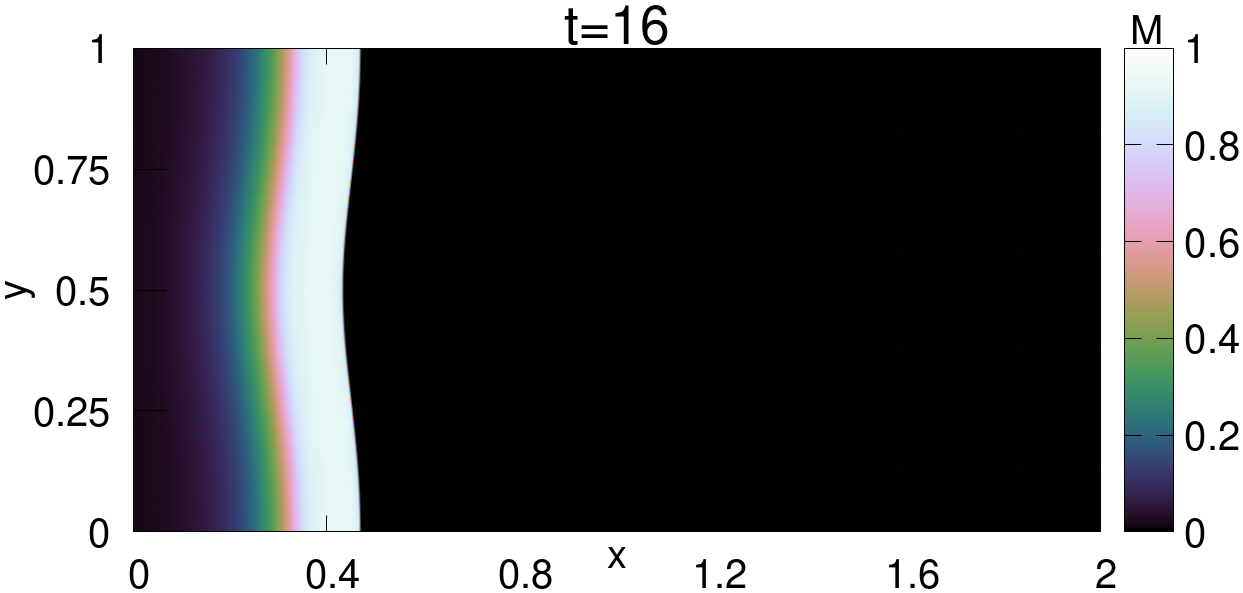} 
\end{subfigure}
\begin{subfigure}{.48\textwidth}
\includegraphics[width=\textwidth]{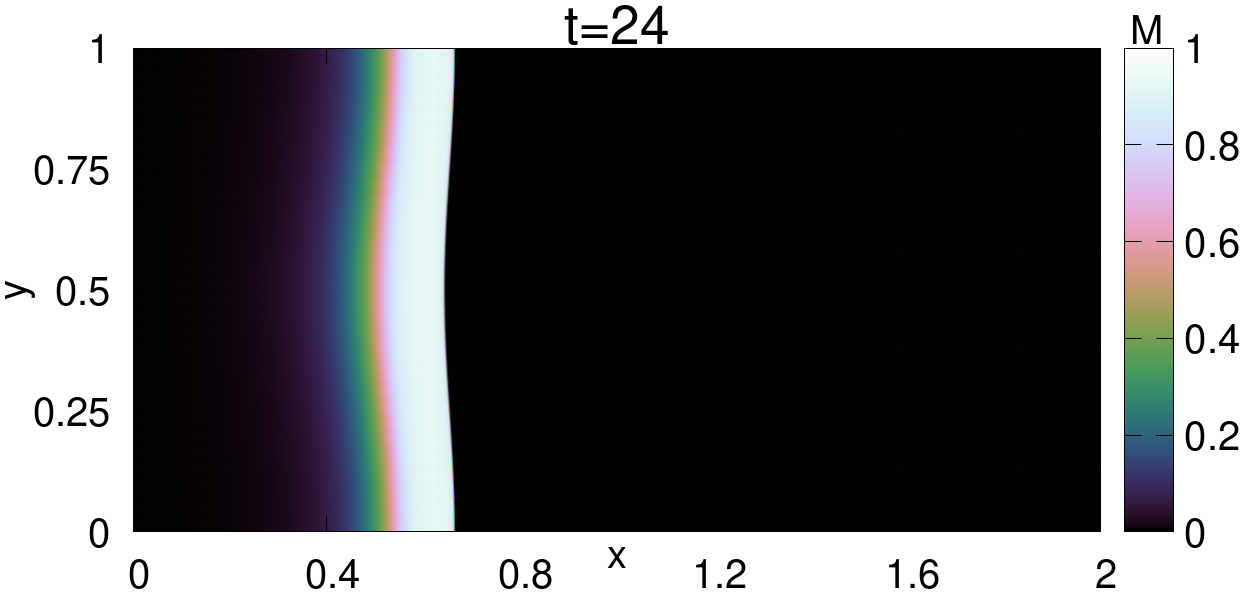} 
\end{subfigure}
\begin{subfigure}{.48\textwidth}
\includegraphics[width=\textwidth]{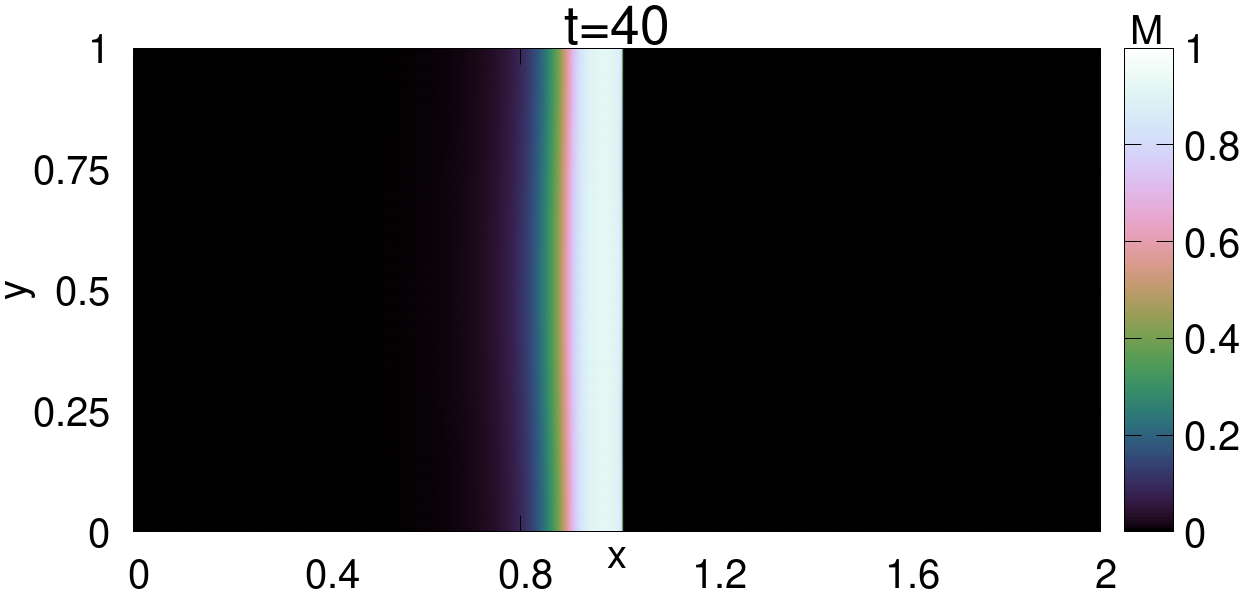} 
\end{subfigure}
\caption{The transient behaviour of the numerical solution of \eqref{eq:PDE2D} subjected to the initial condition \eqref{eq:2D_init}. The parameter values are taken from \Cref{tab:default_parameters}. }\label{fig:2D_stability}
\end{figure}

\begin{figure}[h!]
\centering
\includegraphics[scale=.55]{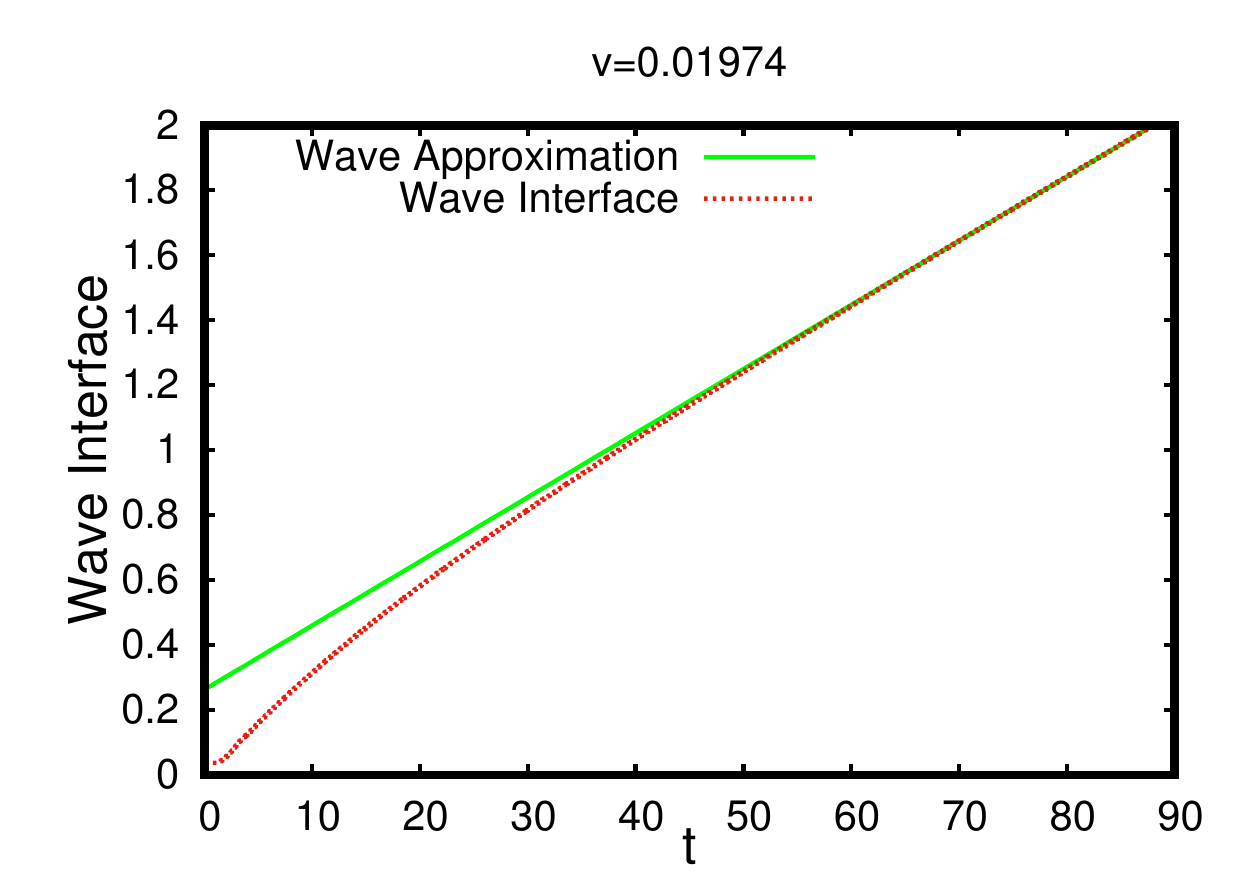}
\caption{Wave-speed estimation for the two dimensional simulation presented in \Cref{fig:2D_stability}.}\label{fig:wavespeed2D}
\end{figure}

\subsection{ODE simulations}\label{sec:ODEscheme}
Upon verifying that the solutions indeed converge to TW solutions, we device a more direct and faster method to compute the TW profiles.

\subsubsection{TW algorithm}
To directly find the TW profile and the corresponding wave-speed $v>0$ for a given set of parameters, we use a semi-analytic approach that utilizes the monotonicity property described in \Cref{theo:order} with respect to $v>0$. We solve the dynamical system \eqref{eq:GE} for a given $v>0$, except we solve the negation of the vector field, i.e.
\begin{subequations}\label{eq:GE-}
\begin{align}
&\dt M^{v}= -v\,[\ell(S^v)- M^v],
\label{eq:GEm-}\\[.5em]
& \dt S^v= -\tfrac{\g}{v} {f(S^v) M^v D(M^v)},\label{eq:GEs-}
\end{align}
\end{subequations}
with $D$ and $f$ as in \eqref{eq:DefD}. The above system is solved numerically for $\t>0$ using the 4$^{\rm th}$-order Runge-Kutta method \cite{suli2003numerical} and the following initial condition is used for the computation:
\begin{align}\label{eq:algoBC}
(M^v,S^v)(0)=(\e,1), &\text{ for } \e=10^{-3}.
\end{align}
This avoids the problem of starting from the degenerate equilibrium point $(0,1)$ and gives a close approximation to the TW profile as indicated by \Cref{theo:01}. 

Then, the \textbf{bisection method} is used to determine the wave-speed for which $(M^v,S^v)$ connects to the other equilibrium point $(0,s_{-\infty})$. Let $\bar{v}>0$ be large enough so that $(M^{\bar{v}},S^{\bar{v}})$ exits the region $\calR :=[0,1)\times [s_{-\infty},1]$ through the line $\{s=s_{-\infty}\}$. The existence of such $\bar{v}>0$ is ensured by \Cref{theo:order}. Similarly, let $\underline{v}\in (0,\bar{v})$ be such that $(M^{\underline{v}},S^{\ul{v}})$ exits $\calR$ through the line $\{m=0\}$. Then we use the following algorithm to determine $v>0$:

\begin{algorithm}[Bisection iteration to determine the travelling wave] $\!$\\[-1em]
\begin{enumerate}
\item Set $v=\frac{1}{2}|\bar{v}+\underline{v}|$. Solve $(M^v,S^v)$ satisfying \eqref{eq:GE-} and \eqref{eq:algoBC} numerically. 
\item If $(M^v,S^v)$ exits $\calR$ through $s=s_{-\infty}$, then set $\bar{v}=v$. Else, set $\underline{v}=v$.
\item If $|\bar{v}-\underline{v}|<10^{-4}\,v$ then stop; otherwise go to Step 1.
\end{enumerate}\label{algo:TW}
\end{algorithm}

For the default parameter set in \Cref{tab:default_parameters}, Algorithm \ref{algo:TW} is  over 1000 times faster than the PDE computation under the same computational set-up.

\subsubsection{Validation of the TW algorithm using the PDE scheme}\label{sec:validation}
\begin{figure}[h!]
\includegraphics[width=0.32\textwidth]{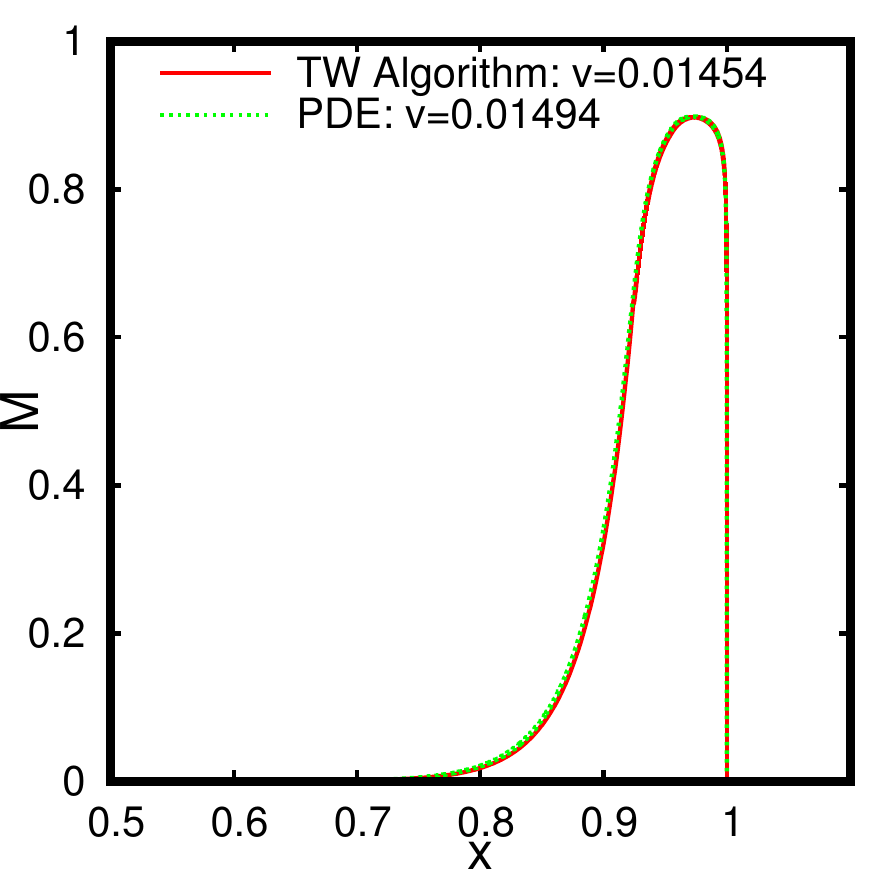}
 \includegraphics[width=0.32\textwidth]{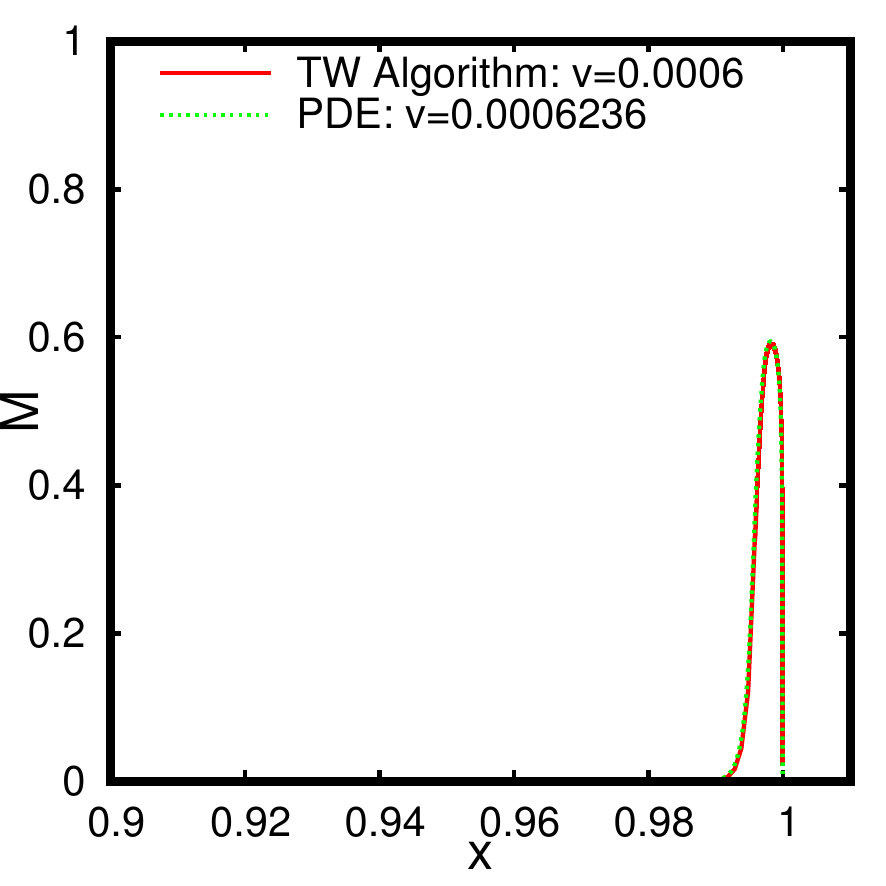}
 \includegraphics[width=0.32\textwidth]{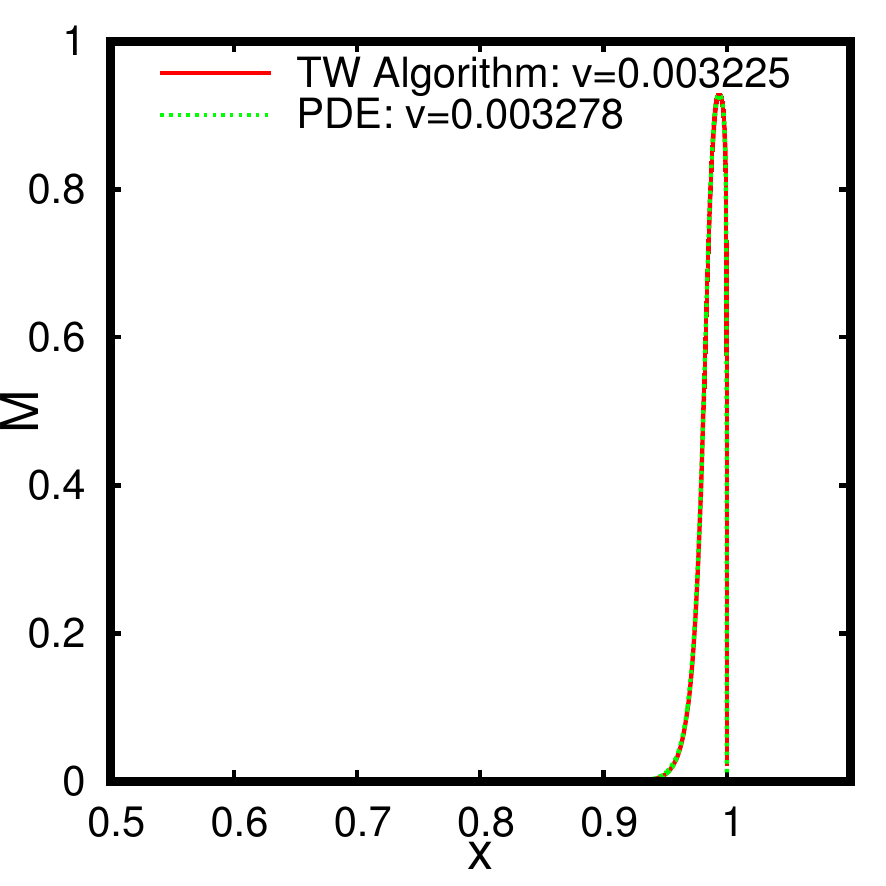} 
\caption{The comparison of the $M$-profiles obtained by using Algorithm \ref{algo:TW} with the  PDE simulations. The parameters are taken from \Cref{tab:default_parameters} except in (left) $\l=0.42$, (center) $\l=0.60$, and (right) $a=b=2$. }\label{fig:validation}
\end{figure}

We compare the profiles obtained from Algorithm \ref{algo:TW} with the profiles of the full PDE simulations. The same three test cases are chosen as in \Cref{fig:transient}. The results are shown in \Cref{fig:validation}. We consider the agreement to be excellent with the wave-speed varying only by $2.72\%$ in the default case of \Cref{tab:default_parameters} for $N=14$ in \eqref{eq:1D_NumPar}, and the profiles are barely distinguishable from each other for the two methods.

\subsubsection{Parametric study in the limiting cases}
\Cref{fig:validation} shows us that when $\lambda$ changes from $0.42$ to $0.6$, the wave-speed and the perceived width of the TW become about 20 times smaller. Hence,  PDE simulations become impractical both for large $\l$ values, due to very small mesh-sizes required, and for small $\l$ values, due to the increase in the  required domain size. Similar problems occur for the coefficients $a,\,b$ (see \Cref{fig:validation}) and the parameters $\g$ and $\k$. Since Algorithm \ref{algo:TW} is much faster compared to the PDE computations, these cases can be better  explored thoroughly using the ODE simulations.

An interesting case-study is the investigation of the conditions provided in \Cref{theo:main} for the existence of TWs. They are,  Condition 1: $\Gf(s)>0$ for all $s\in (s_{-\infty},1)$, which is also a necessary condition due to \Cref{pros:NonExistence}; and Condition 2: \eqref{eq:parameter} is satisfied. For $\g$ and $\k$ given in \Cref{tab:default_parameters}, Condition 1 is satisfied if $\l\geq 0.26$ whereas Condition 2 is satisfied if $\l\leq 0.56$.  Here we are interested in exploring the limits of $\l$ for which the TWs exist. We already saw from \Cref{fig:transient,fig:validation} that a TW solution exists for $\l=0.6$, thus indicating that Condition 2 is not a necessary condition. Using the TW algorithm, we also find TW solutions up to $\l=0.8$. The profile is much narrower in this case and has a minuscule $v=0.000033$, see \Cref{fig:big_lambda} (left).  However, we were unable to find any TW solutions for $\l\geq 0.9$ which is still smaller than the absolute limit of $\l=1$ for which it is guaranteed that TW solutions do not exist, see \Cref{pros:NonExistence}.  \Cref{theo:main} does not guarantee the existence of TW solutions in these cases since the nullcline $m=\ell(s)$ does not intersect the line $\{m=1\}$, see \Cref{fig:big_lambda} (right).
\begin{figure}[h!]
\begin{subfigure}{.5\textwidth}
\includegraphics[trim=0 0 0 1cm,clip, scale=.35]{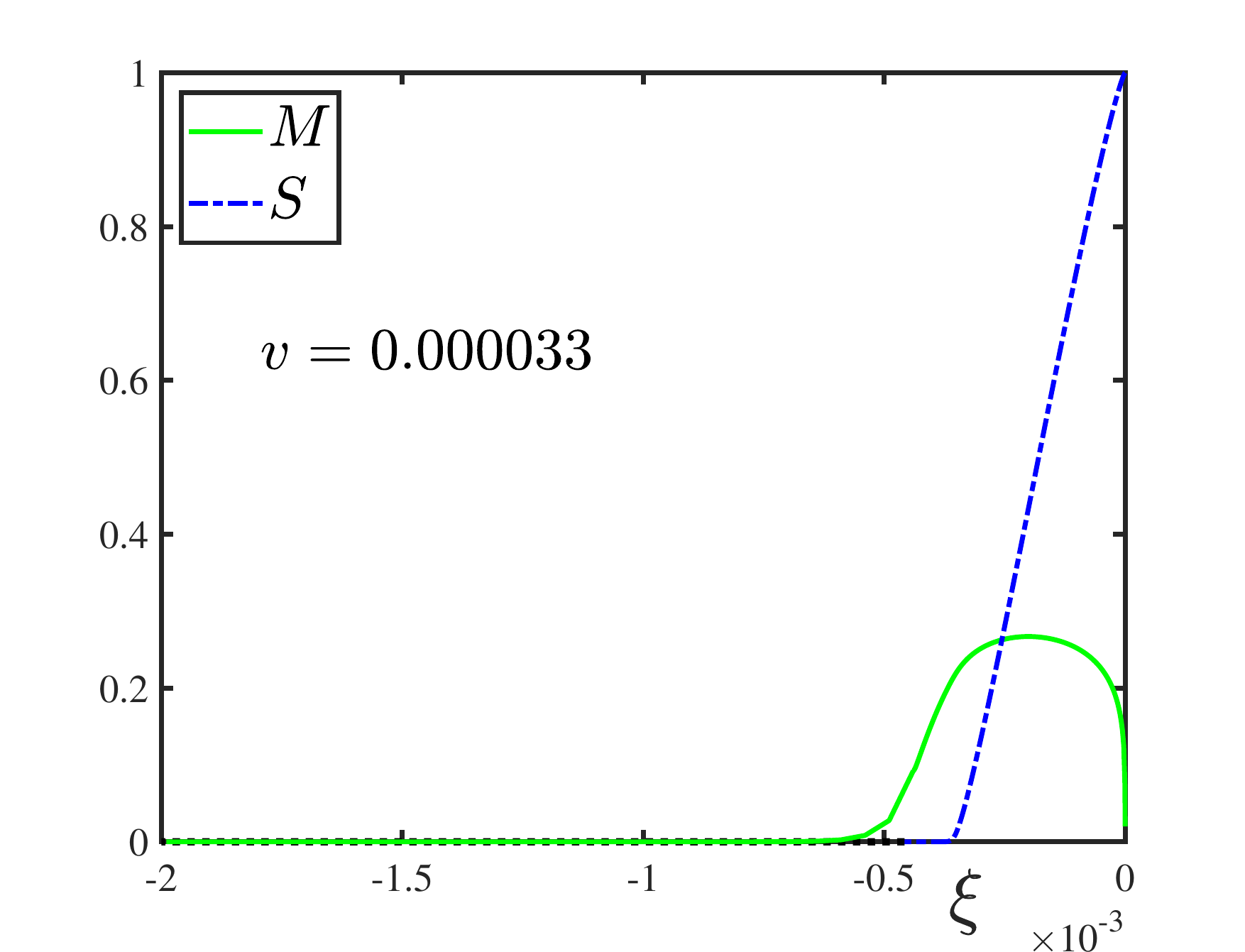}
\end{subfigure}
\begin{subfigure}{.5\textwidth}
 \includegraphics[trim=0 0 0 1cm,clip,scale=.35]{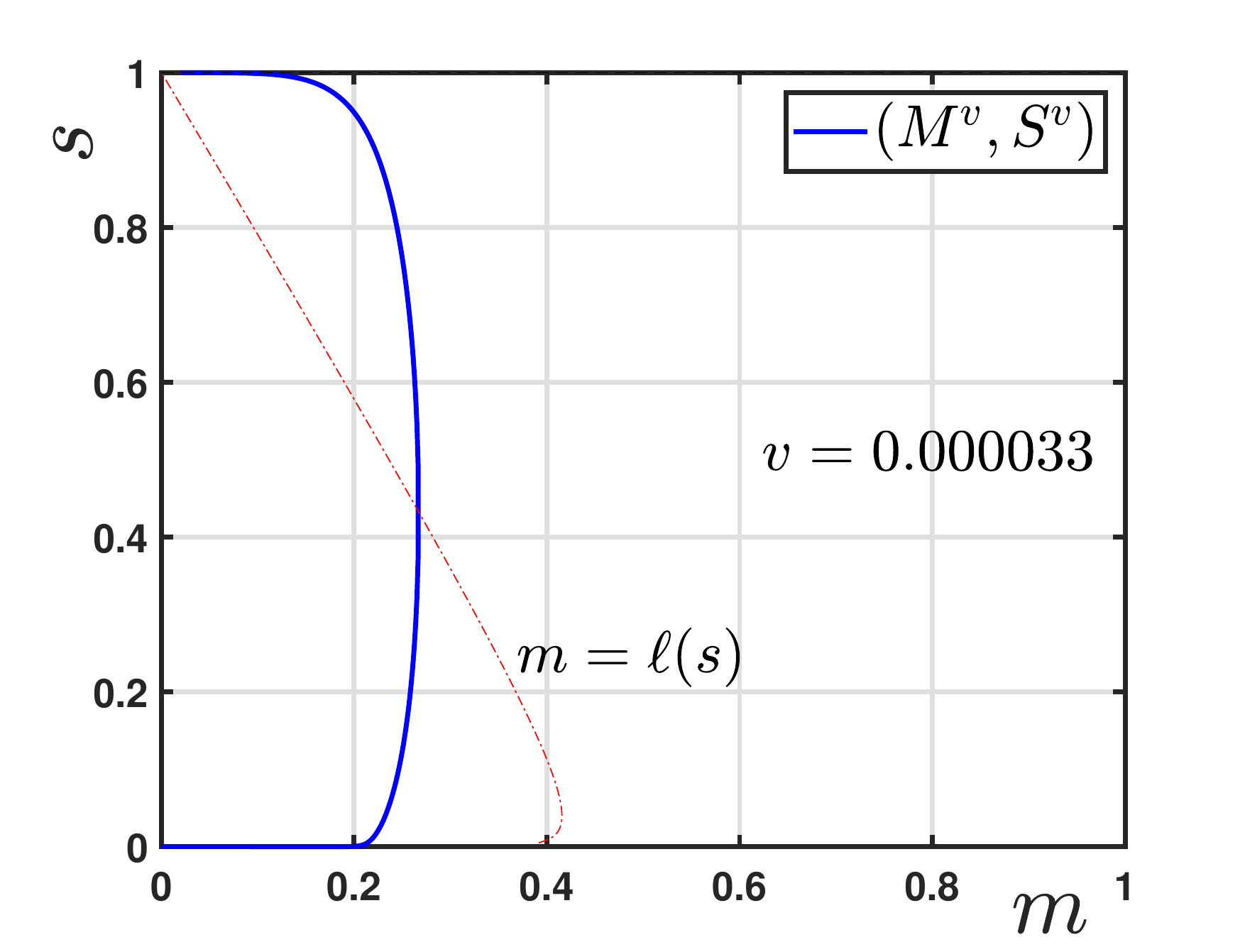}
\end{subfigure}
\caption{(left) The plots of $M^v$ and $S^v$ vs. $\xi=x-vt$ computed using Algorithm \ref{algo:TW} for $\l=0.8$. (right) The orbit $(M^v,S^v)$  in the $m$--$s$ phase--plane.}\label{fig:big_lambda}
\end{figure}

On the other hand, if $\l$ is chosen smaller than $\l=0.42$, then we see a rapid increase in wave-speed and a widening of the profile, see \Cref{fig:small_lambda}. In fact, varying between $\l=0.38$ and $\l=0.36$ the wave-speed and the profile width increase ten fold. As such, the computational time required for the algorithm to converge increases exponentially, and we were unable to obtain TW solutions for values of $\l$  smaller than $0.36$. However, the trend in behaviour when varying $\l$ is  evident from the simulations. 

\begin{figure}[h!]
\begin{subfigure}{.5\textwidth}
\includegraphics[trim=0 0 0 1cm,clip, scale=.35]{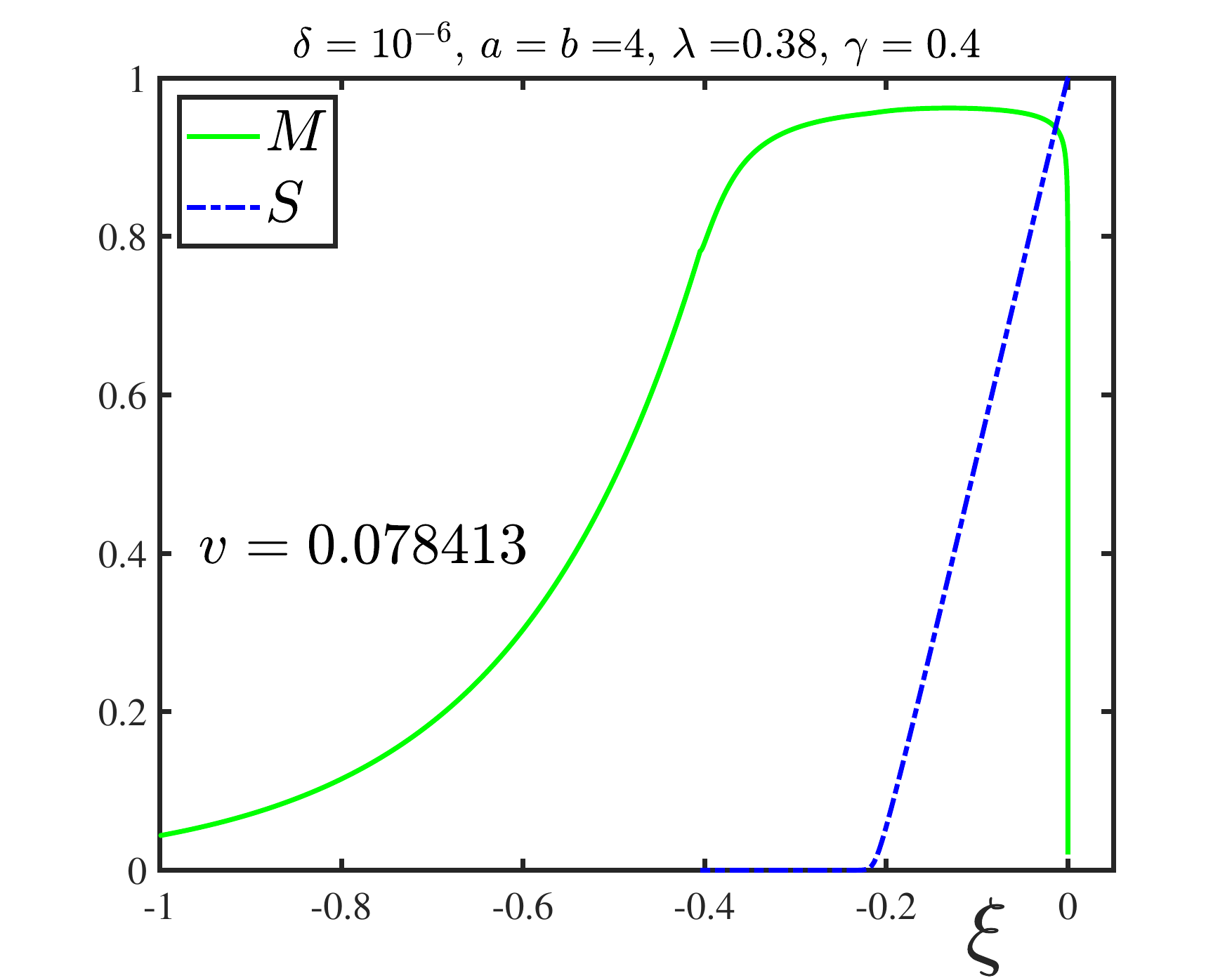}
\end{subfigure}
\begin{subfigure}{.5\textwidth}
 \includegraphics[trim=0 0 0 1cm,clip,scale=.35]{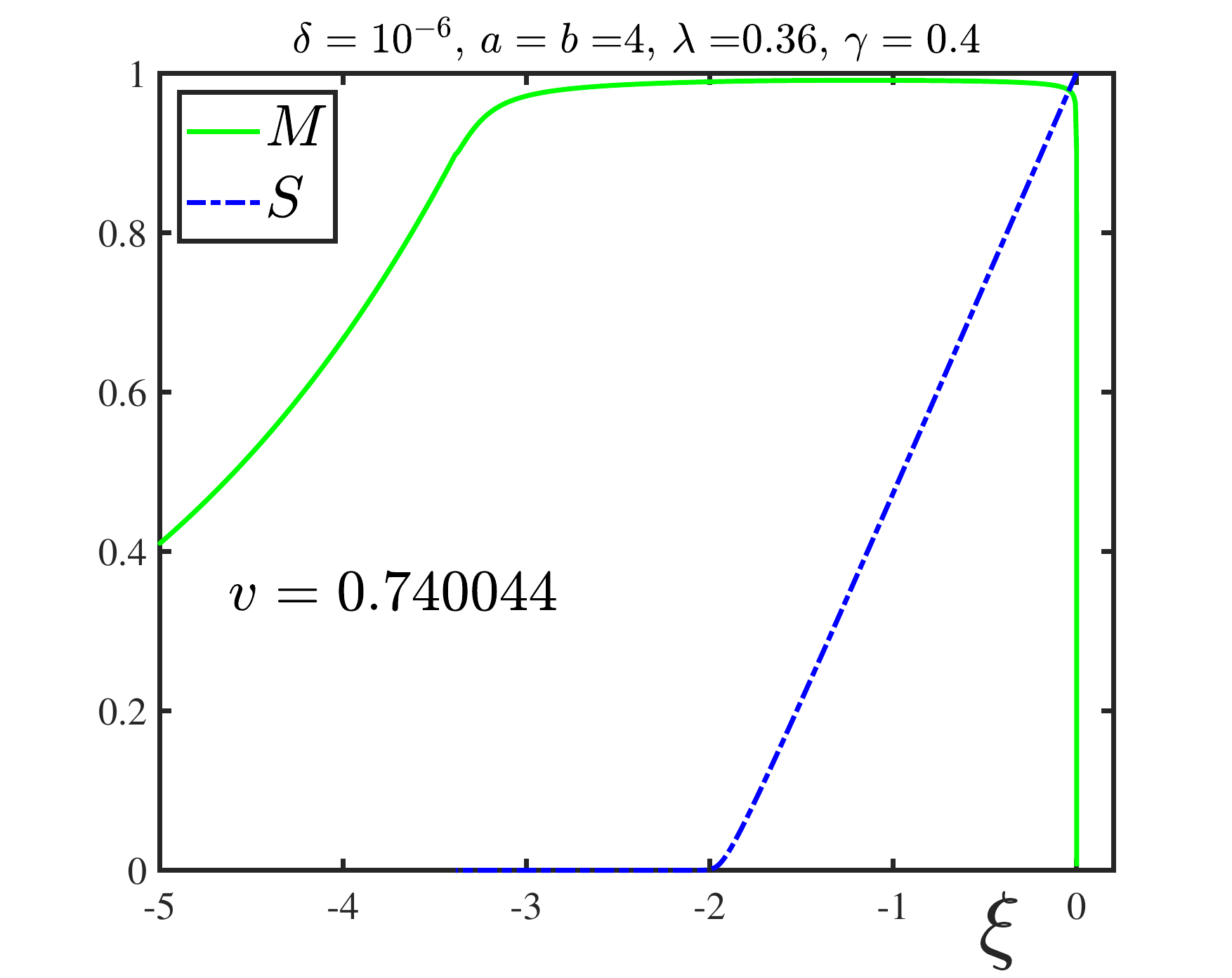}
\end{subfigure}
\caption{The plots of $M^v$ and $S^v$ vs. $\xi=x-vt$ computed using Algorithm \ref{algo:TW}. For the (left) plot $\l=0.38$, and the (right) plot $\l=0.36$.}\label{fig:small_lambda}
\end{figure}

\subsection{Numerical continuation method}\label{sec:NumCont}
Another approach to study parameter regimes for which the system  
(\ref{eq:GE}) has a connecting orbit is to use numerical continuation techniques \cite{doedel1989}. 
The approach is to find a connecting orbit on the center unstable manifold of the rest state
$(0,s_{-\infty})$ to the strong stable manifold of the rest state $(0,1)$.

Following \cite{doedel1989},  we set this up as a two point boundary value problem:
\begin{subequations}\label{eq:BVP}
\begin{align}
&\dt M= v\,T\, [\ell(S)- M], &\mbox{~~~for $\t \in (0,1)$},
\label{eq:BVPm}\\[.5em]
& \dt S=  \tfrac{\g}{v}\,T\, \,f(S)\, M D(M), &\mbox{~~~for $\t \in (0,1)$}
\label{eq:BVPs},\\[.5em]
&(M(0),S(0))= (\e_0, s_{\e_0}),&\label{eq:BVP0} \\[.5em]
& (M(1),S(1)= (\e_1,1)\label{eq:BVP1},& 
\end{align}
\end{subequations}
where $s_{\e_0}$ is the smallest root of the equation
\begin{equation}
  \ell(s)=\e_0, \label{eq:appcum}
\end{equation}
i.e.,  $\ell(s_{\e_0}) = \e_0$.
Note that (\ref{eq:BVP1}) is a point on the linearized strong stable manifold for the rest state
$(0,1)$. The center unstable manifold of the rest state $(0,s_{-\infty})$ is well approximated
by $(\ell(s),s)$ for $s$ near $s_{-\infty}$ since the 
homological equation for the center manifold is given by
\[ M\, D(M)\, \dfrac{\dd M}{\dd S} = \frac{v^2}{\g f(S)}\,[\ell(S)- M], \]
and $M D(M) ={ \cal O}(M^{1+a})$ for $M$ near zero.
This is why we use the boundary condition (\ref{eq:BVP0}). The parameter $T$ is the time of travel and is taken to be very large in order to approximate the heteroclinic
orbit. An integral condition (see, e.g. \cite{doedel1989,doedel91}) is added to (\ref{eq:BVP}) to
facilitate adaptive mesh selection when computing a branch of solutions.
We use AUTO-07P \cite{auto} for these computations.

To further illustrate the conclusions of Theorem~\ref{theo:main}, and to investigate the limiting cases where the condition $\Gf(s)>0$ for all $s\in (s_{-\infty},1)$ is violated, we compute the heteroclinic connections using the parameter
values $a=b=2$, $\kappa=1$,  and $\lambda = 0.3$. With these values of $\kappa$ and $\lambda$, we find numerically that $s_{-\infty}\approx 0.1319$ and    
$\Gf(s)>0$ for all $s\in (s_{-\infty},1)$ provided $\gamma > \approx 0.1093$. Hence, TWs cannot exist for  $
\g$ below this threshold. This is exactly what is observed numerically from 
\Cref{fig:autores} (left) where the wave-speed $v$ is plotted against corresponding $\g$ values. The horizontal asymptote shows that no solution is possible below a certain threshold of $\g$ close to the predicted value. In the (right) figure we see how the orbits vary in the phase--plane when $\g$ is decreased, tending towards the line $\{m=1\}$ uniformly. This is similar to what was observed in the case when $\l$ was lowered, see \Cref{fig:small_lambda}.

\begin{figure}[h!]
\begin{subfigure}{.5\textwidth}
\includegraphics[scale=.4]{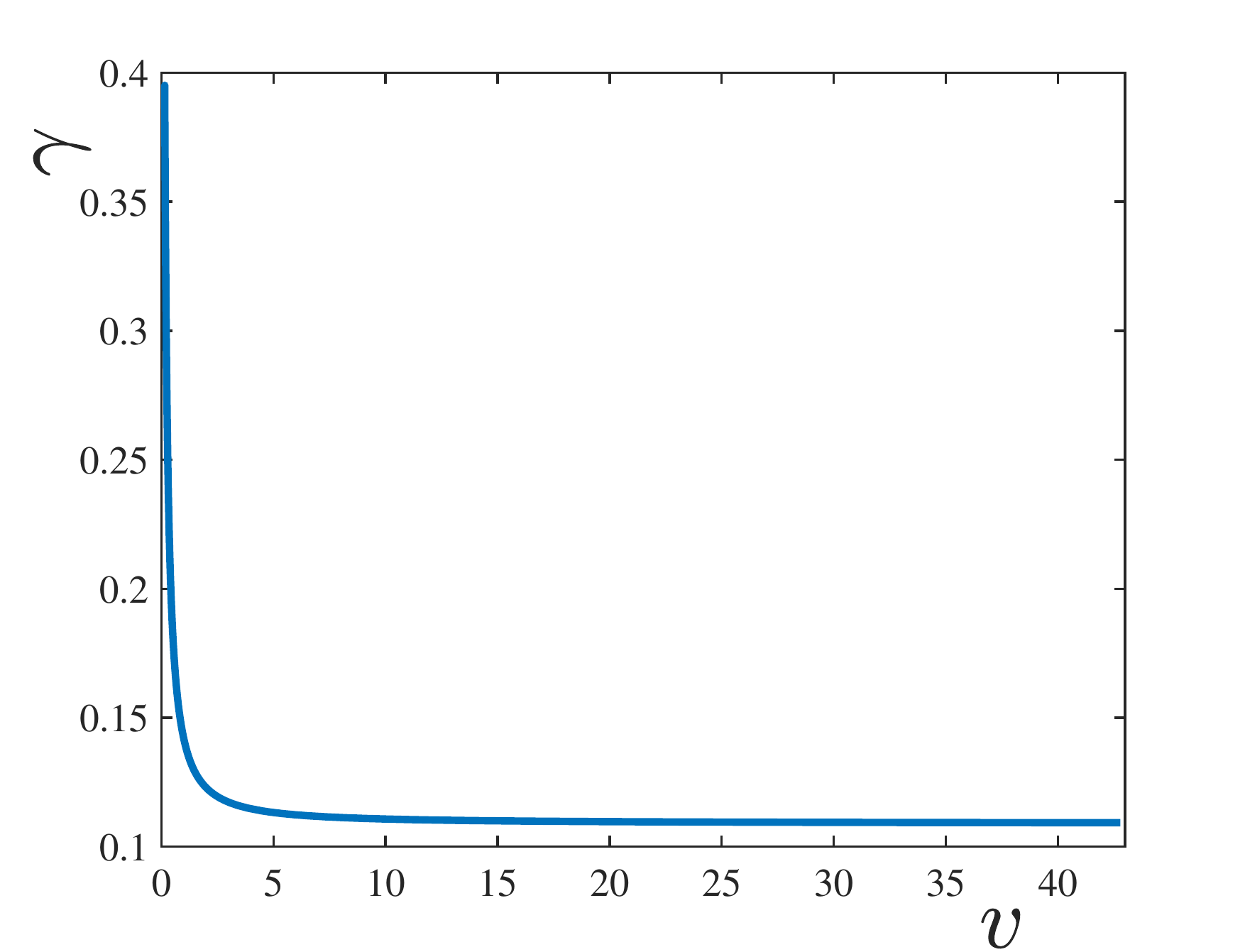}
\end{subfigure}
\begin{subfigure}{.5\textwidth}
 \includegraphics[scale=.4]{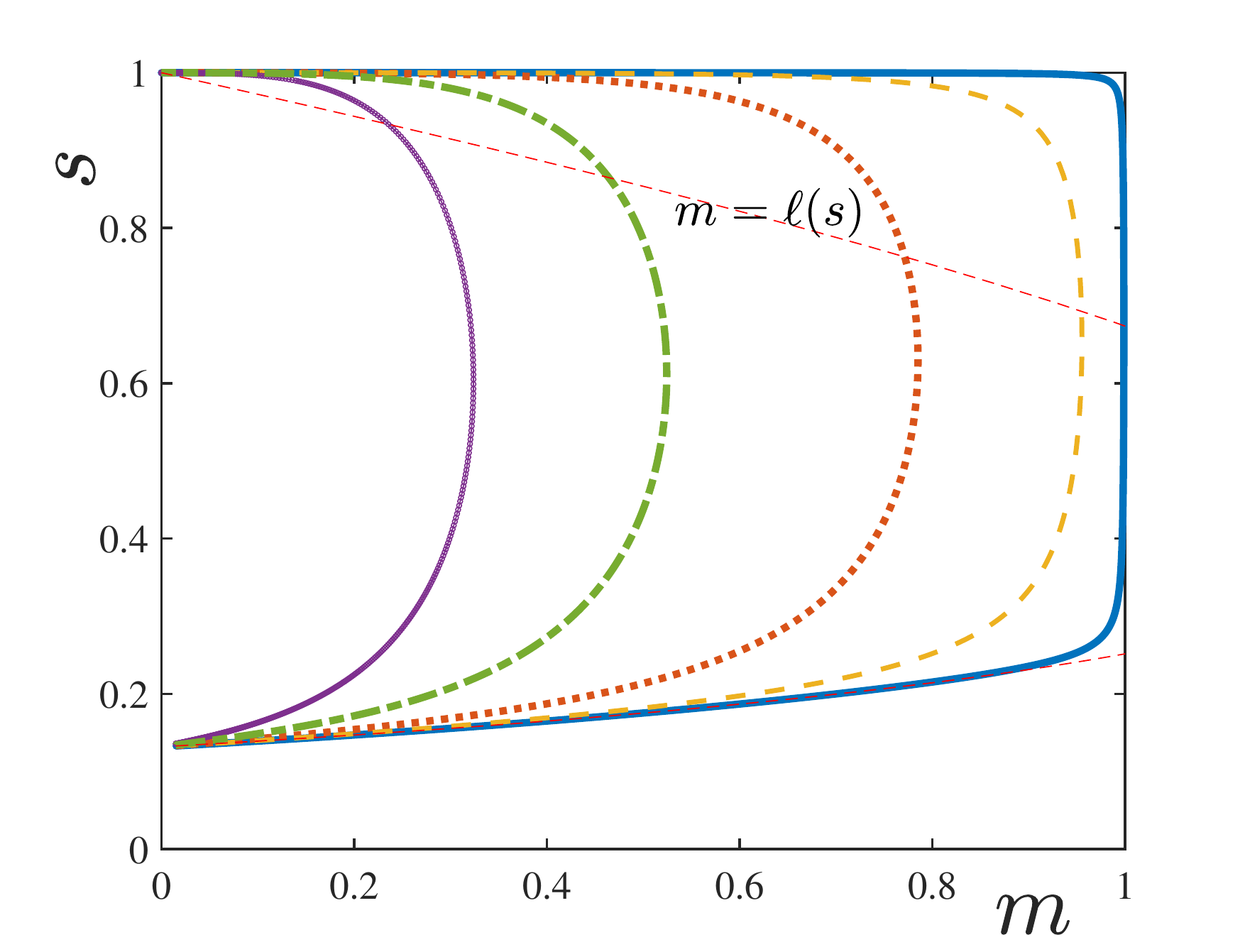}
\end{subfigure}
\caption{The results for the numerical continuation method. The (left) plot is the bifurcation diagram of $\gamma$ versus the wave speed $v$. In the (right) plot are the corresponding solutions in the phase--plane.}\label{fig:autores}
\end{figure}
In \Cref{fig:ppnullcline} (left) we see a representative solution in the phase--plane
along with the nullcline, $m=\ell(s)$. The effect of using the phase-condition as described
in \cite{doedel91} is illustrated in \Cref{fig:ppnullcline} (right) where solutions at two different parameter values are depicted.

\begin{figure}[h!]
\begin{subfigure}{.4\textwidth}
\includegraphics[width=\textwidth]{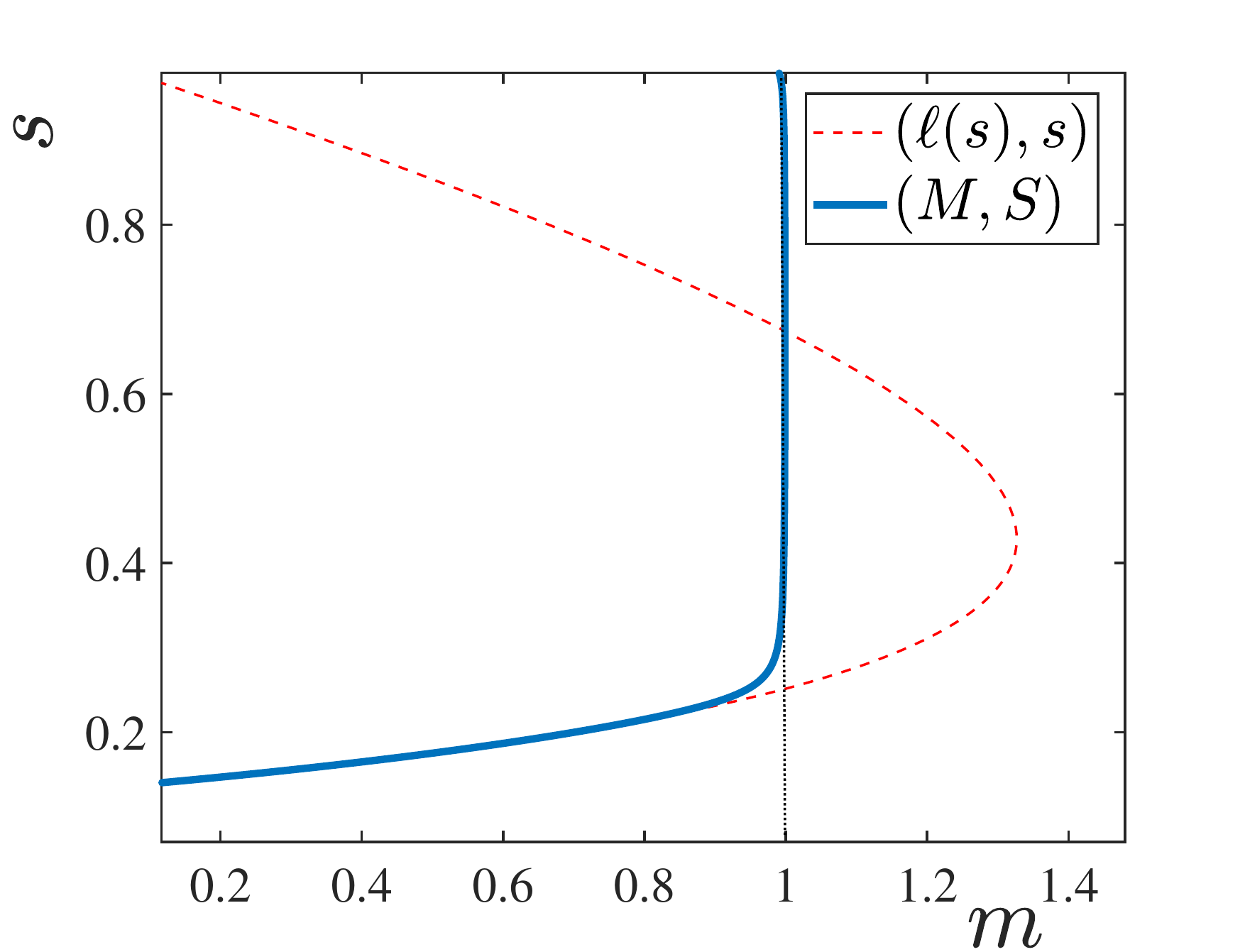}
\end{subfigure}
\begin{subfigure}{.6\textwidth}
\includegraphics[width=\textwidth]{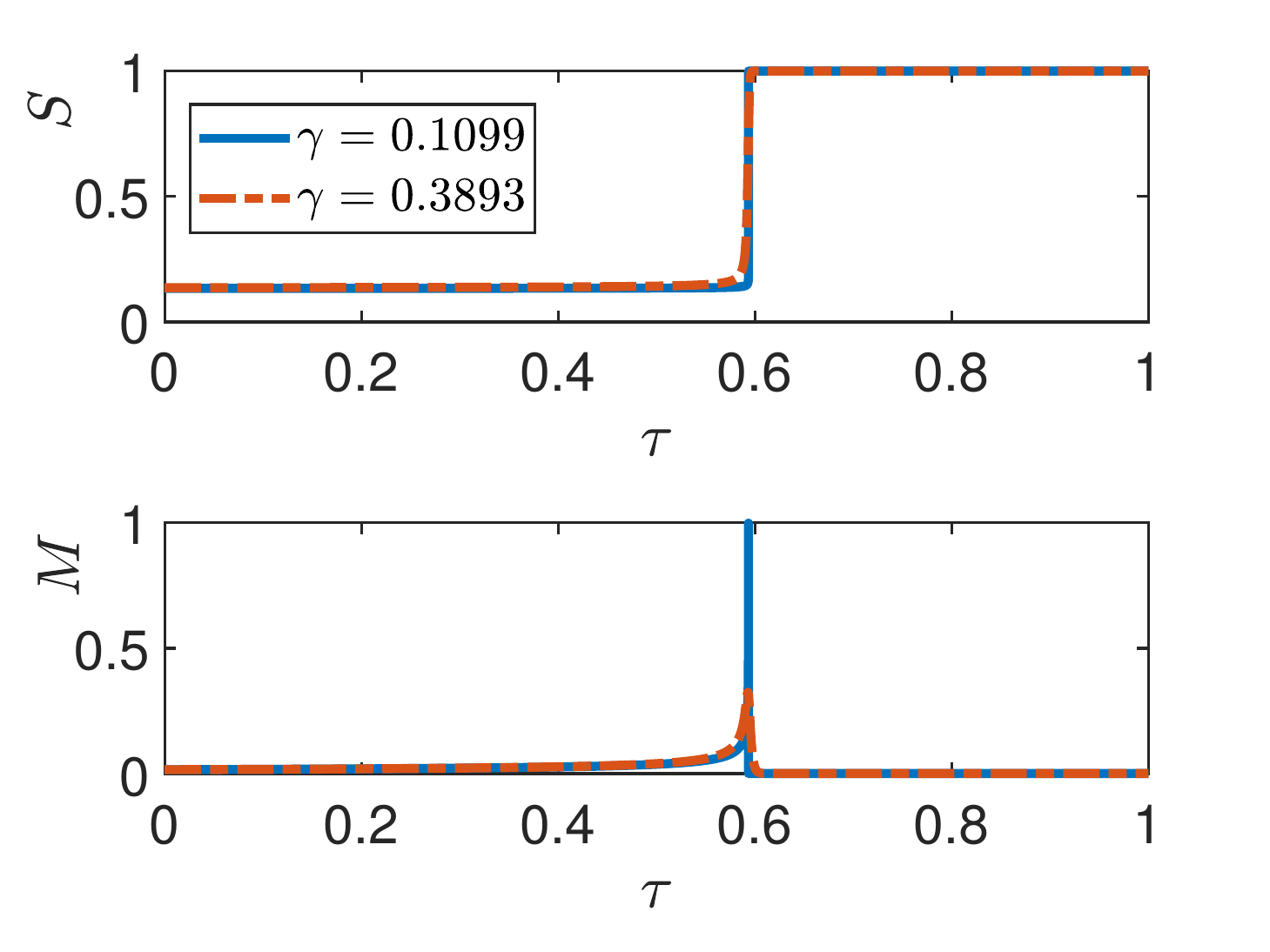}
\end{subfigure}
\caption{(left) Plot of one solution (blue) in the plase--plane along with the nullcline $m=\ell(s)$ (red). Here $\gamma =0.1099$ and $v=16.227$. (right) Plot of two different solutions as functions of $\tau$, one for $\gamma=0.3893$ (red) and one for $\gamma = 0.1099$ (blue).}
\label{fig:ppnullcline}
\end{figure}

\section{Interpretation of the results}\label{sec:interpretation}
The analysis in \Cref{sec:TW existence} shows that TWs for the system \eqref{eq:PDE} exist as expected from the numerical experiments in \cite{eberl2017spatially} and  observations on cellulolytic biofilms made in \cite{wang2011spatial}. Moreover, we predicted theoretically and verified numerically that they exist for a wide range of parameters.  The resulting TWs are found to be stable even for large perturbations (see \Cref{fig:transient,fig:2D_stability}) which agrees with the numerical observations in \cite{eberl2017spatially,hughes2022RODE}. 
In the context of biofilm growth, the TWs describe the formation of crater like structures (inverted colonies), i.e. the invasion and degradation of the undisturbed cellulosic environment by microbes. At the leading edge of this invading front is a microbially active layer that consumes the substrate. In the wake of  this layer, decay terms dominate over growth terms, and thus, this region is dominated by the dynamics of how fast substrates  degrade and how fast bacteria decay if growth cannot be sustained. This complex interplay gives the TW its distinct shape.

Our analysis also suggests that the TWs do not exist if for example, either the cell-loss rate $\l\in (0,1)$ is too small or too large (\Cref{pros:NonExistence}). From the numerical experiments in \Cref{sec:ODEscheme}, we can postulate the reason behind the non-existence of TWs. We see from \Cref{fig:small_lambda} that as  $\l$ decreases, the TW profile becomes wider very quickly, and the biofilm concentration $M$ approaches a value close to 1 in a large interval. This is expected since small $\l$ implies less decay of $M$, and for very small $\l$ we expect that the biofilm concentration would grow monotonically with time and reach 1 in every point of the domain, implying that a TW solution cannot exist. On the other hand, for $\l$  large, the profiles become narrower and their amplitudes decrease as seen from \Cref{fig:big_lambda}. Hence, one expects that for $\l$ large enough, the initial biofilm profile would decay to 0 monotonically with time.

The variation of the wave-speed can also be explained through these observations. If $\l$ is small, then the profile is wider. Hence, the bacteria consume the substrate faster, which results in a higher wave-speed. For $\l$ large the effect is reversed. 

Similarly, the effects of the consumption rate $\g>0$ on the existence, profile-width and wave-speed of the TW can be explained, see \Cref{fig:autores}. Higher values of $\g$ result in a faster consumption of the substrate, which leads to a decrease in the production of biomass. Hence, the effects of increasing $\g$ are analogous to the effects of increasing $\l$.

Lastly, we remark that our approach can also be applied to PDE--ODE systems with a porous media type diffusion coefficient (i.e. $D(M)=M^a$), although boundedness of $M\in [0,1)$ cannot be expected in this case. Furthermore, we expect that the results in this paper  can be extended to study TW solutions of PDE--ODE systems with multiple substrates, see the system in  \cite{efendiev2020mathematical}  for instance. Due to the structure of the TW,  as a pulse with sharp front and diffusive tail travelling at a constant speed, systems of the form \eqref{eq:PDE} can likely be used to model several other biological and physical processes with immobile substrates, such as tumor growth, fungal growth, and the spreading of wildfire.

\subsection*{Acknowledgements}
K. Mitra and S. Sonner would like to thank the Nederlandse Organisatie voor Wetenschappelijk Onderzoek (NWO) for their support through the grant OCENW.KLEIN.358. J. Hughes is supported by the Natural Sciences and Engineering Research Council (NSERC) of Canada. The data and models used in this paper are publicly available and have been properly referenced.


\begin{thebibliography}{1}
\linespread{0}

\bibitem{biro2002}
Z. Bir\'o. 
\newblock Stability of travelling waves for degenerate reaction-diffusion equations of KPP-type.
\newblock {\em Advanced Nonlinear Studies}, 2(4): 357--371, 2002.

\bibitem{burden2010numerical}
R.L. Burden, and J.D. Faires.
\newblock Numerical Analysis.
\newblock {\em Brooks Cole}, Boston, 2010.

\bibitem{Pablo2000}
A.~de~Pablo, and A.~S{\'a}nchez. 
\newblock Global travelling waves in reaction-convection-diffusion equations.
\newblock {\em Journal of Differential Equations} 165(2): 377--413,  2000.

\bibitem{de1998travelling}
A.~de~Pablo, and A.~S{\'a}nchez.
\newblock Travelling wave behaviour for a porous-{Fisher} equation.
\newblock {\em European Journal of Applied Mathematics}, 9(3): 285--304, 1998.

\bibitem{de1991travelling}
A.~de~Pablo, and J.L. V{\'a}zquez.
\newblock Travelling waves and finite propagation in a reaction-diffusion
  equation.
\newblock {\em Journal of Differential Equations}, 93(1): 19--61, 1991.

\bibitem{doedel1989}
E.~Doedel and M.J.~Friedman.
\newblock Numerical Computation of heteroclinic orbits.
\newblock {\em Journal of Computational and Applied Mathematics}, 26: 155--170,
  1989.

\bibitem{doedel91}
  \newblock  E.~Doedel, H.~Keller, J.~Kernevez,
\newblock  Numerical Analysis and Control of Bifurction Problems (II): Bifurction in Infinite Dimensions.
  \newblock {\em International Journal of Bifurcation and Chaos}, 1(4):745-772, 1991.


  \bibitem{auto}
E.~Doedel,  et al. 
\newblock AUTO-07P: Continuation and bifurcation software for ordinary differential equations.
Numerical Computation of heteroclinic orbits.
\newblock {\em GitHub repository}, 2007,
\newblock https://github.com/auto-07p/auto-07p.

\bibitem{dumitrache2015mathematical}
 A. Dumitrache,  H.J. Eberl,  D.G. Allen, and  G.M. Wolfaardt. 
 \newblock Mathematical modeling to validate on--line $\text{CO}_2$ measurements as a metric for cellulolytic biofilm activity in continuous--flow bioreactors. 
\newblock {\em Biochemical Engineering Journal} 101: 55--67, 2015.

\bibitem{eberl2007finite}
H.J. Eberl, and L. Demaret.
\newblock A finite difference scheme for a degenerated diffusion equation arising in microbial ecology.
\newblock {\em Electronic Journal of Differential Equations}, 15: 77--95, 2007.


\bibitem{eberl2017spatially}
H.J. Eberl, E.M. Jalbert, A.~Dumitrache, and G.M. Wolfaardt.
\newblock A spatially explicit model of inverse colony formation of
  cellulolytic biofilms.
\newblock {\em Biochemical Engineering Journal}, 122: 141--151, 2017.

\bibitem{efendiev2009classification}
M.~Efendiev, and J.~Muller.
\newblock Classification of existence and non-existence of running fronts in
  case of fast diffusion.
\newblock {\em Advances in Mathematical Sciences and Applications}, 19(1): 285,
  2009.
  
  \bibitem{efendiev2020mathematical}
M. A. Efendiev,  M. Otani, and H.J. Eberl.
\newblock Mathematical Analysis of a PDE-ODE Coupled Model of Mitochondrial Swelling with Degenerate Calcium Ion Diffusion.
\newblock {\em SIAM Journal on Mathematical Analysis}, 52(1): 543--569, 2020.
  


\bibitem{ghasemi2018numerical}
M. Ghasemi, and H.J. Eberl.
\newblock Time adaptive numerical solution of a highly degenerate diffusion-reaction biofilm model based on regularisation.
\newblock {\em Journal of Scientific Computing}, 74(2): 1060--1090, 2018.



\bibitem{harley2014existence}
K. Harley, P. van Heijster, R. Marangell, G.J Pettet, and M. Wechselberger. \newblock Existence of traveling wave solutions for a model of tumor invasion.
\newblock {\em SIAM Journal on Applied Dynamical Systems}, 13(1): 366--396, 2014.


\bibitem{harley2014novel}
K. Harley, P. van Heijster, R. Marangell, G.J. Pettet, and M. Wechselberger. 
\newblock Novel solutions for a model of wound healing angiogenesis. 
\newblock {\em Nonlinearity}, 27(12): 2975, 2014.

\bibitem{hughes2022RODE}
\newblock J.M. Hughes, H.J. Eberl, and S. Sonner. 
\newblock  A mathematical model of discrete attachment to a cellulolytic biofilm using random DEs
\newblock {\em submitted}



\bibitem{logan2001transport}
 J.D. Logan. 
\newblock Transport modeling in hydrogeochemical systems (Vol. 15).
\newblock {\em Springer Science and Business Media}, 2001.

\bibitem{mitra2021existence}
  K. Mitra. 
\newblock  Existence and properties of solutions of the extended play-type hysteresis model.
\newblock {\em Journal of Differential Equations}, 288: 118--140, 2021.


\bibitem{mitra2019fronts}
\newblock K. Mitra, T. K\"oppl, I.S. Pop, C.J. van Duijn, and R. Helmig.
\newblock Fronts in two--phase porous media flow problems: The effects of hysteresis and dynamic capillarity. 
\newblock {\em  Studies in Applied Mathematics}, 144(4): 449--492, 2020.


\bibitem{mitra2020travelling}
K. Mitra, A. R\"atz, and B. Schweizer. 
\newblock Travelling wave solutions for gravity fingering in porous media flows.
\newblock {\em arXiv preprint}, arXiv:2011.10792, 2020.

\bibitem{mitra2018wetting}
\newblock K. Mitra, and C. J. van Duijn. 
\newblock Wetting fronts in unsaturated porous media: The combined case of hysteresis and dynamic capillary pressure.
\newblock {\em Nonlinear Analysis: Real World Applications}, 50: 316--341, 2019.

\bibitem{murray2002mathematicalbiology}
J.D. Murray. 
\newblock Mathematical biology I. An introduction. 
\newblock {\em Springer}, 2002.

\bibitem{rohanizadegan2020SPDE}
\newblock Y. Rohanizadegan, S. Sonner, and H.J. Eberl.
\newblock Discrete attachment to a cellulolytic biofilm modeled by an It\^o stochastic differential equation. 
\newblock {\em Mathematical Biosciences and Engineering}, 17: 2236--2271, 2020.

\bibitem{sanchez1996shooting}
F.A.~S{\'a}nchez-Gardu$\tilde{\text{n}}$o, P.K. Maini, and  M.E. Kappos. 
\newblock A shooting argument approach to a sharp-type solution for nonlinear degenerate Fisher-KPP equations.
\newblock {\em IMA journal of applied mathematics}, 57(3): 211--221, 1996.

\bibitem{satnoianu2001travelling}
 R.A. Satnoianu, P.K. Maini, F.A.~S{\'a}nchez-Gardu$\tilde{\text{n}}$o, and J.P. Armitage. \newblock Travelling waves in a nonlinear degenerate diffusion model for bacterial pattern formation. 
 \newblock {\em Discrete and Continuous Dynamical Systems--B}, 1(3): 339, 2001.

\bibitem{suli2003numerical}
E. S\"uli and D.F. Mayers.
\newblock An introduction to numerical analysis.
\newblock {\em Cambridge University Press}, New York, 2003.

\bibitem{VANDUIJN2018232}
C.J. van Duijn, K. Mitra, and I.S. Pop. 
\newblock Travelling wave solutions for the Richards equation incorporating non-equilibrium effects in the capillarity pressure 
\newblock {\em Nonlinear Analysis: Real World Applications}, 41 (C): 232--268, 2018.

\bibitem{van2019stability}
C.J. van Duijn, G.J.M. Pieters, and P.A.C. Raats. 
\newblock On the stability of density stratified flow below a ponded surface. 
\newblock {\em Transport in Porous Media} 127(3): 507--548, 2019.

\bibitem{vazquez2007porous}
J.L. V\'azquez. The porous medium equation: mathematical theory. {\em Oxford University Press}, 2007.


\bibitem{wang2011spatial}
Z.W. Wang, S.H. Lee, J.G. Elkins, and J.L. Morrell-Falvey.
\newblock Spatial and temporal dynamics of cellulose degradation and biofilm formation by Caldicellulosiruptor obsidiansis and Clostridium thermocellum.
\newblock {\em AMB Express} 1: 1--10, 2011.
\end{thebibliography}

\bibliographystyle{plain}

\begin{appendices}
\section{Grid independence study for the PDE simulations}\label{sec:grid_refinement}
In this section, we study how the mesh-size $\D x$ affects the TW solution obtained numerically from solving the PDE. The focus is to investigate the convergence of the wave-speed and profile as $\Delta x\to0$.  Let $v_N$ denote the wave-speed, numerically measured using the scheme described in \Cref{sec:transient}, for the $2^N$-grid simulation. We quantify the relative differences between the wave-speeds across grid resolutions via the ratios
\begin{align} \label{eq:GRR}
    {\rm ReD}_N=\frac{|v_N-v_{16}|}{|v_{16}|}, \text{ and } {\rm GRR}_N=\frac{{\rm ReD}_{N-1}}{{\rm ReD}_N}.
\end{align} 
Here, the quantity ${\rm GRR}_N$ stands for the grid refinement ratio which gives a quantitative comparison between the successive relative differences. 
The wave-speeds for each simulation along with the relative difference calculations are given in Table \ref{tab:speeds}.

\begin{table}[ht!]
\centering
\begin{tabular}{|l|l|l|l|l|}
\hline
N &Grid Size & Wave-speed $v_N$ & ReD$_N$ & GRR$_N$\\ \hline
9 &$512$ & 0.019450 & 0.308242 &-- \\
10 &$1024$ & 0.017100 & 0.150125 & 2.05\\
11 &$2048$ & 0.015909 & 0.070031 & 2.14\\
12 &$4096$ & 0.015335 & 0.031434 & 2.23\\
13 &$8192$ & 0.015064 & 0.013201 & 2.38\\
14 &$16384$ & 0.014944 & 0.005165 & 2.56\\ \hline
\end{tabular}
\caption{Wave-speeds calculated from the various grid resolutions along with the relative difference calculations ReD$_N$ and GRR$_N$ from \eqref{eq:GRR}. The wave speed for the $2^{16}$ grid simulation is $v_{16}=0.014868$.} \label{tab:speeds}
\end{table}
\begin{figure}[h!]
\centering
\includegraphics[width=.48\textwidth]{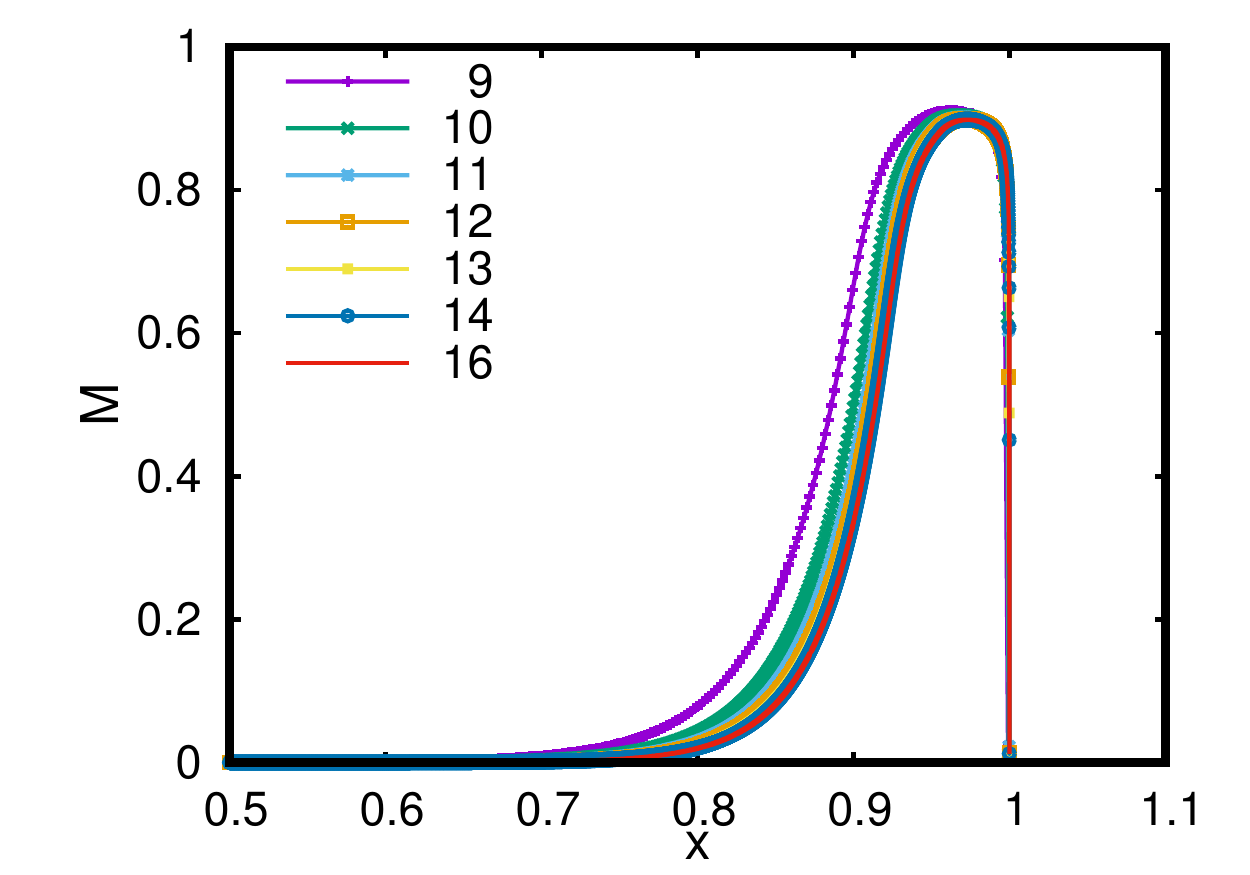} 
\includegraphics[width=.48\textwidth]{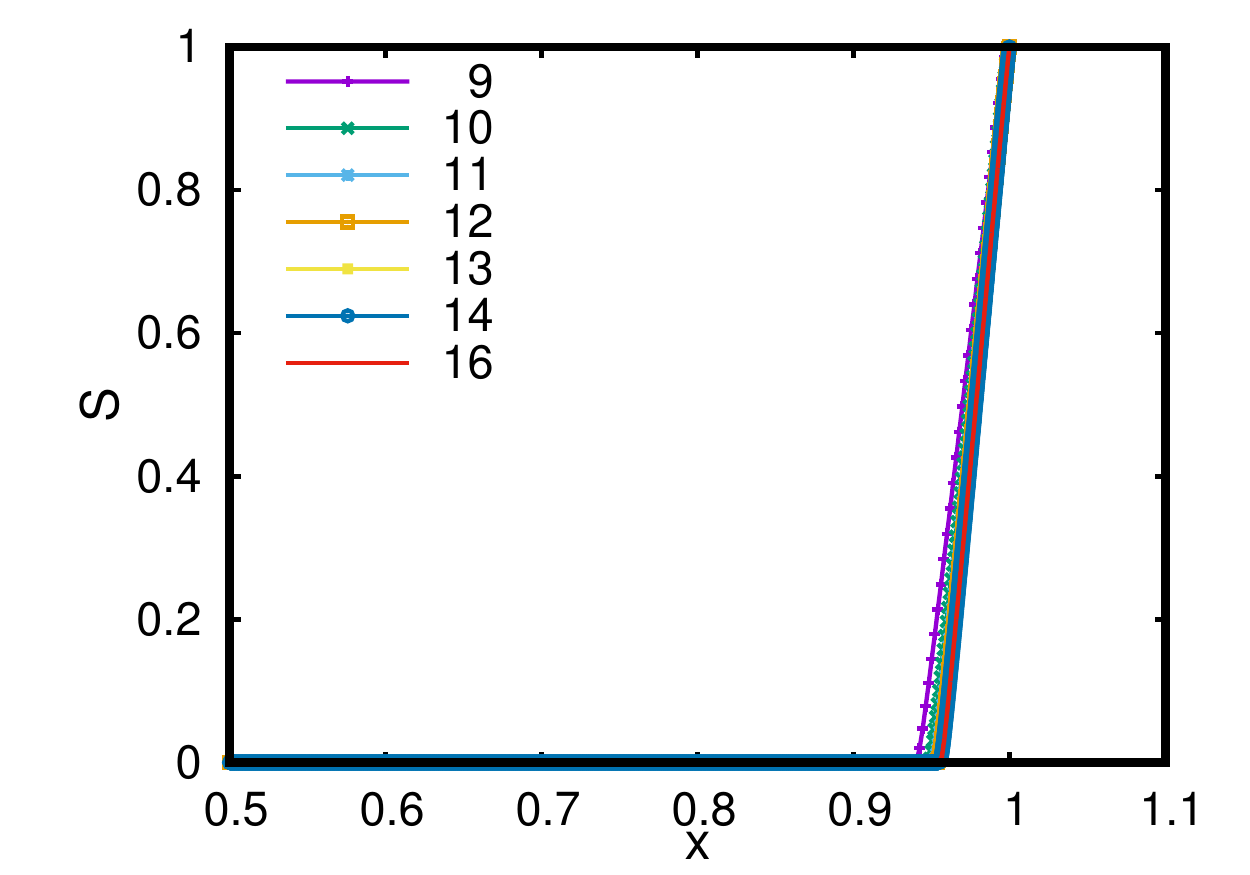}
\caption{Wave profiles of the various grid refinement simulations. The simulation parameters are given in Table \ref{tab:default_parameters}. } \label{fig:grid}
\end{figure}

It is seen from Table \ref{tab:speeds} that the wave-speeds decrease as the grid is refined and relative differences tend to zero. These observations indicate that we have convergence with respect to the wave-speed. Figure \ref{fig:grid} shows the convergence of the $M$ and $S$ profiles as the grid is made finer. A narrower overall wave profile and a steeper wave front is observed with finer grid resolutions. This is due to less numerical diffusion at smaller discretization levels.

The convergence of both the wave-speed and the wave profile guarantees that the TWs can indeed be reproduced using the numerical scheme described in \Cref{sec:PDEscheme}. Given the results in Table \ref{tab:speeds} and considerations on simulation runtime, we use a $2^{14}$-grid in our simulations and a time step of $\Delta t=10^{-2}(2^9(1/2^{14}))^2$ as presented in \eqref{eq:1D_NumPar}.

\end{appendices}
\end{document}